\newcommand{\Z}{\mathbb{Z}}
\newcommand{\pres}[2]{\langle {#1}\ |\ {#2} \rangle}
\newtheorem{theorem}{Theorem}
\newtheorem{lemma}[theorem]{Lemma}
\newtheorem{corollary}[theorem]{Corollary}
\newtheorem{observation}[theorem]{Observation}
\newtheorem{example}[theorem]{Example}
\newtheorem{remark}[theorem]{Remark}
\newtheorem{maintheorem}{Theorem}
\numberwithin{theorem}{section}
\begin{document}
\title{Hyperbolicity of T(6) Cyclically Presented Groups}
\author{Ihechukwu Chinyere and Gerald Williams\thanks{This work was supported by the Leverhulme Trust Research Project Grant RPG-2017-334. Part of the work was carried out during a research visit funded by Leverhulme Trust Emeritus Fellowship EM-2018-023$\backslash$9.}}

\maketitle

\begin{abstract}
We consider groups defined by cyclic presentations where the defining word has length three and the cyclic presentation satisfies the T(6) small cancellation condition. We classify when these groups are hyperbolic. When combined with known results, this completely classifies the hyperbolic T(6) cyclically presented groups.
\end{abstract}

\noindent \textbf{Keywords:} cyclically presented group, hyperbolic group, small cancellation theory.\newline
\noindent \textbf{MSC:} 20F05, 20F06, 20F67.

\section{Introduction}\label{sec:intro}

Groups defined by presentations that satisfy the $C(p)-T(q)$ (non-metric) small cancellation conditions where $1/p+1/q<1/2$ are hyperbolic (\cite[Corollary~3.3]{GerstenShortI}). Therefore the cases $(p,q)=(3,6),(4,4),(6,3)$ present boundary cases and here both hyperbolic and non-hyperbolic groups can arise. For these cases,
in~\cite[Corollary, page~1860]{IvanovSchupp} the $C(p)-T(q)$ presentations that define hyperbolic groups are characterised as those for which there is no minimal flat over the presentation. In this article we consider groups defined by a class of presentations that admit a certain cyclic symmetry and satisfy $C(3)-T(6)$. We classify when the corresponding groups are hyperbolic in terms of the defining parameters of the presentations.

The \em cyclically presented group \em $G_n(w)$ is the group defined by the \em cyclic presentation \em
\[ P_n(w) = \pres{x_0,\ldots, x_{n-1}}{w,\theta(w),\ldots, \theta^{n-1}(w)}\]
where $w(x_0,\ldots ,x_{n-1})$ is a cyclically reduced word in the free group $F_n$ (of length $l(w)$) with generators $x_0,\ldots ,x_{n-1}$ and $\theta : F_n\rightarrow F_n$ is the \em shift automorphism \em of $F_n$ given by $\theta(x_i)=x_{i+1}$ for each $0\leq i<n$ (subscripts mod~$n$, $n\geq 2$).

If a presentation satisfies T(6) then, as observed by Pride (\cite[Section~5]{PrideStarComplexes},\cite[Lemma~3.1]{GerstenShortI}), every piece has length~1 and so if $P_n(w)$ satisfies T(6) then it satisfies C($l(w)$)-T(6). Thus if $l(w)>3$ then the presentation $P_n(w)$ satisfies C(4)-T(6) and hence $G_n(w)$ is hyperbolic by~\cite[Corollary~4.1]{GerstenShortI} and therefore it is non-elementary hyperbolic by~\cite{Collins73} or \cite{EdjvetHowie}. If the length $l(w)=1$ then $G_n(w)$ is trivial, and if $l(w)=2$ then $G_n(w)$ is the free product of copies of $\Z$ or $\Z_2$.
Therefore we must consider the case $l(w)=3$ (in which case the $C(3)-T(6)$ condition coincides with the $T(6)$ condition). If $w$ is a positive (or negative) word, then we may assume $w=x_0x_kx_{l}$, and if $w$ is non-positive (and non-negative) then we may assume $w=x_0x_mx_{k}^{-1}$. Our main results consider these cases.

The groups $G_n(x_0x_mx_{k}^{-1})$ are known as the \em groups of Fibonacci-type \em and were introduced independently in~\cite{JohnsonMawdesley} and~\cite{CHR}, for algebraic and topological reasons, respectively. This family of groups contains the \em Fibonacci groups \em $F(2,n)=G_n(x_0x_1x_2^{-1})$ of~\cite{Conway65}, the \em Sieradski groups \em $S(2,n)=G_n(x_0x_2x_1^{-1})$ of~\cite{Sieradski}, and the \em Gilbert-Howie groups \em $H(n,m)=G_n(x_0x_mx_1^{-1})$ of~\cite{GilbertHowie}. They have been subsequently studied in~\cite{BardakovVesnin,W-CHR,COS,HowieWilliams,HowieWilliamsMFD} -- see \cite{Wsurvey} for a survey. In particular, the T(6) and T(7) presentations $P_n(x_0x_mx_{k}^{-1})$ were classified in~\cite[Theorem~10]{HowieWilliams} (see Corollary~\ref{cor:T6T7nonpositive}, below) and \cite[Theorem~11]{HowieWilliams} records that in the T(7) case the groups $G_n(x_0x_mx_{k}^{-1})$ are non-elementary hyperbolic. The groups $G_n(x_0x_kx_{l})$ were introduced in~\cite{CRS}, and studied further in~\cite{EdjvetWilliams,BogleyWilliamsCoherence,MohamedWilliams}. The T(6) presentations $P_n(x_0x_kx_l)$ were classified in~\cite[Lemma~5.1]{EdjvetWilliams} (see Lemma~\ref{lem:positiveT6}, below). Moreover, Theorem~3.7 of~\cite{MohamedWilliams} shows that for all but finitely many $n$ the T(6) groups $G_n(x_0x_kx_l)$ are hyperbolic.

Our main results are as follows:

\begin{maintheorem}\label{thm:positive}
Let $n\geq 2$, $0\leq k,l<n$, $(n,k,l)=1$ and suppose that the cyclic presentation $P_n(x_0x_kx_{l})$ is T(6) and let $G=G_n(x_0x_kx_l)$.
If $n=7$ or $8$ or
\begin{itemize}
  \item[(a)] $n=21$ and ($l\equiv 5k$ or $k\equiv 5l$~mod~$n$); or
  \item[(b)] $n=24$ and ($l\equiv 5k$ or $k\equiv -4l$ or $l\equiv -4k$~mod~$n$);
\end{itemize}
then $G$  is not hyperbolic; otherwise $G$ is non-elementary hyperbolic.
\end{maintheorem}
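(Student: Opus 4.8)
The plan is to combine the small cancellation structure of $P_n(x_0x_kx_l)$ with the Ivanov--Schupp criterion quoted in the introduction: a $C(3)$--$T(6)$ presentation defines a hyperbolic group precisely when there is no \emph{minimal flat} over it. So the whole argument reduces to a careful search for flats: planar diagrams tiled by the $2$-cells of $P_n(x_0x_kx_l)$ whose $1$-skeleton is quasi-isometric to the Euclidean plane. Since $T(6)$ forces every interior vertex to have degree at least $6$ and every piece to have length $1$, such a flat is a tessellation of the plane in which interior vertices have degree exactly $6$ and all faces are triangles (the defining word has length $3$); these are exactly the ``$(3,6)$'' tilings. The first step is therefore to set up the combinatorial framework: describe the star graph / link of $P_n(x_0x_kx_l)$, record which corner labels can occur around an interior degree-$6$ vertex, and translate the flat condition into a purely arithmetic condition on $n$, $k$, $l$.

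Next I would carry out this translation explicitly. Because the presentation is cyclic, a flat over $P_n(w)$ corresponds to a periodic labelling of the standard $(3,6)$-triangulation of the plane by the generators $x_0,\dots,x_{n-1}$ that is consistent with the relator $x_0x_kx_l$ and all its shifts around every triangle, and consistent with the $T(6)$ link condition around every vertex. Propagating the relator around a single interior vertex gives a system of congruences in the subscripts; solving it shows that a consistent global labelling exists only when the ``jumps'' $k$ and $l$ satisfy one of a short list of linear relations modulo $n$, and — crucially — only for finitely many values of $n$, since a genuine flat needs the local pattern to close up, which over-determines $n$. This is where the specific exceptional values $n=7,8,21,24$ and the congruences $l\equiv 5k$, $k\equiv 5l$, $k\equiv -4l$, $l\equiv -4k$ must emerge: one enumerates the finitely many ways the $(3,6)$-pattern can be periodically coloured, checks which are compatible with a cyclic presentation, and reads off the triples $(n,k,l)$. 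Conversely, for each listed exceptional case one must exhibit the flat (equivalently, a $\Z\times\Z$ or $\Z^2$-like subgroup, or directly a minimal flat van Kampen diagram), thereby proving non-hyperbolicity; one should also confirm these presentations really are $T(6)$ via Lemma~\ref{lem:positiveT6}, so that the hypotheses are consistent.

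For the positive direction — that outside the exceptional list $G$ is non-elementary hyperbolic — I would argue as follows. Having shown no minimal flat exists, Ivanov--Schupp gives hyperbolicity immediately. To upgrade to \emph{non-elementary} hyperbolic, I would rule out the finite and virtually cyclic cases: $T(6)$ cyclic presentations of length-$3$ relators give infinite groups for the relevant $n$ (one can use the asphericity/efficiency results underlying \cite{EdjvetWilliams} and the area estimates of \cite{MohamedWilliams}), and a non-elementary hyperbolic group is detected by the existence of a non-abelian free subgroup, which small cancellation theory supplies (e.g. via Collins' result cited in the introduction for the $l(w)>3$ analogue, adapted here). One should also separately treat the small values of $n$ (such as $n=2$) where degeneracies occur, matching the statement's exclusion of $n=7,8$ as non-hyperbolic.

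The main obstacle I anticipate is the \emph{finiteness} and \emph{completeness} of the flat classification: showing that the only periodic colourings of the $(3,6)$-tiling compatible with a cyclic length-$3$ relator are the ones producing $n\in\{7,8,21,24\}$ with the stated congruences, and that nothing is missed. Concretely this means a finite but intricate case analysis of how the shift acts on the triangulated plane — equivalently, understanding the possible lattices $\Lambda\le\Z^2$ such that $\Z^2/\Lambda$ carries a consistent relator-labelling of bounded complexity — together with care about symmetry (the $k\leftrightarrow l$ and inversion symmetries of the relator, which is exactly why the congruences appear in symmetric pairs). The converse constructions (exhibiting the flats) are comparatively routine once the arithmetic is pinned down, and the non-elementary upgrade is standard, so the combinatorial enumeration is the crux.
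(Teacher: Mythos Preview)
Your plan is a legitimate alternative to the paper's argument, but there is a conceptual gap in the heart of it. You expect the flat classification to be finite because ``a genuine flat needs the local pattern to close up, which over-determines $n$''. That is not how the finiteness arises. By Observation~\ref{observation:XYZXYZcycle} the star graph \emph{always} contains the $(XYZ)^2$ cycle, so degree-$6$ interior vertices are available for every $T(6)$ triple $(n,k,l)$; there is no arithmetic obstruction to starting a $(3,6)$-tiling. The real obstruction is local and combinatorial: Lemma~\ref{lem:twolabelled(XYZ)^2} shows two $(XYZ)^2$-labelled vertices of a face force an $aba$ pattern at the third corner, so a flat cannot be built from $(XYZ)^2$ vertices alone, and Lemmas~\ref{lem:INT(XYZ)^2twoneighbours}--\ref{lem:label(F1F2.j)} show that if only \emph{one} further $6$-cycle type $(E.j)$, $(F1.j)$ or $(F2.j)$ is available, then any attempt to tile forces vertices of degree~$\geq 8$ nearby. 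This is what rules out flats for the (infinitely many) $n$ with at most one extra $6$-cycle condition; it is not a closing-up argument. Your enumeration strategy would have to rediscover exactly these configuration lemmas, and you should expect that, not a finite arithmetic search.

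Second, your plan misses the $n=27$ subtlety. Lemma~\ref{lem:combinationsofEFG} shows that when \emph{two} of the $(E.j)$, $(F1.j)$, $(F2.j)$ conditions hold one lands in $n\in\{7,8,21,24,27\}$, each with $G\cong G_n(x_0x_1x_5)$. For $n=27$ there are therefore several length-$6$ cycle types available, yet the group is hyperbolic; the paper disposes of it by a KBMAG computation (Example~\ref{ex:n=27}), not by theory. A pure flat-enumeration argument would have to certify by hand that no periodic $(3,6)$-labelling exists for $n=27$ while one does for $7,8,21,24$ --- this is exactly the delicate point, and nothing in your outline anticipates it. By contrast the paper sidesteps the flat enumeration entirely in the generic case by assigning angles and proving a linear isoperimetric inequality (Lemma~\ref{lem:curvaturepositivecase}); the Ivanov--Schupp criterion is never invoked. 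So your approach is genuinely different and in principle workable, but the crux is not the arithmetic of periods --- it is the same local vertex-neighbour analysis the paper carries out, plus an ad hoc treatment of $n=27$.
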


\begin{maintheorem}\label{thm:nonpositive}
Let $n\geq 2$, $0\leq m,k<n$, $(n,m,k)=1$, $m\neq k$, $k\neq 0$, and suppose that the cyclic presentation $P_n(x_0x_mx_k^{-1})$ is T(6) and let $G=G_n(x_0x_mx_k^{-1})$. If $n=8$ or ($n\geq 12$ is even and $2(2k-m)\equiv 0$~mod~$n$) then $G$ is not hyperbolic; otherwise $G$ is non-elementary hyperbolic.
\end{maintheorem}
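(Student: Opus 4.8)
The plan is to apply the Ivanov--Schupp criterion cited in the introduction \cite[Corollary, page~1860]{IvanovSchupp}: as the $T(6)$ presentation $P_n(x_0x_mx_k^{-1})$ is also $C(3)$--$T(6)$, the group $G$ is hyperbolic if and only if there is no minimal flat over $P_n(x_0x_mx_k^{-1})$. The argument then splits into three parts: (i) in each claimed non-hyperbolic case produce a flat --- or, more simply, a subgroup $\cong\mathbb{Z}\times\mathbb{Z}$, which already rules out hyperbolicity; (ii) in every other $T(6)$ case show no minimal flat exists, so $G$ is hyperbolic; (iii) show the hyperbolic groups so obtained are non-elementary.

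I would begin by invoking the $T(6)$ classification of $P_n(x_0x_mx_k^{-1})$ in Corollary~\ref{cor:T6T7nonpositive} (from \cite[Theorem~10]{HowieWilliams}), which leaves only a short explicit list of congruences on $(n,m,k)$, and within each family I would normalize $(m,k)$ using the shift and the evident reflection symmetries of $P_n(w)$. Next I would set up the diagrammatic picture: the presentation complex has one $0$-cell, oriented $1$-cells $x_0,\dots,x_{n-1}$ and triangular $2$-cells read from $x_ix_{m+i}x_{k+i}^{-1}$; since $T(6)$ forces every piece to have length~$1$, a flat over $P_n(w)$ is precisely a reduced tessellation of $\mathbb{R}^2$ by copies of these relator triangles in which every vertex has degree exactly~$6$. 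The key point is that going once around a degree-$6$ vertex reads a cyclic sequence of corner labels which must be realized by relators and must ``close up'', and in this parametrized setting that closing-up condition becomes a congruence on sums of the index-shifts $0,m,k$ modulo~$n$; the whole dichotomy turns on when this congruence is solvable.

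For part~(i) I would argue constructively. In the generic family --- $n\ge 12$ even with $2(2k-m)\equiv 0 \bmod n$ (equivalently $2k-m\equiv 0$ or $n/2 \bmod n$) --- this is exactly the congruence that makes the degree-$6$ closing-up condition satisfiable by a doubly-periodic labelling of the standard triangular tiling; I would write that labelling down, verify the resulting infinite diagram is reduced, and read off two independent translation periods whose corresponding elements of $G$ generate a subgroup $\cong\mathbb{Z}\times\mathbb{Z}$ --- so $G$ is not hyperbolic. In some of these cases it may be cleaner to identify $G$ directly with a known non-hyperbolic group, e.g.\ via the $3$-manifold group results of \cite{HowieWilliamsMFD}. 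The value $n=8$ I would treat separately: here the $T(6)$ presentations lying outside the even family still admit a flat of bounded complexity, and after normalization only finitely many $(m,k)$ remain, for each of which I would exhibit the flat explicitly.

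The main obstacle is part~(ii): ruling out minimal flats for all remaining $T(6)$ triples. I would first reduce to periodic flats, using the rigidity of $C(3)$--$T(6)$ flats together with the shift-invariance of $P_n(w)$ --- the local configurations being finite in number, a minimal flat must repeat and hence be periodic up to the shift --- and then push the degree-$6$ closing-up condition through the whole periodic pattern to extract the governing congruence on $m,k$ modulo~$n$; the computation should show this forces ``$n=8$ or ($n$ even and $2(2k-m)\equiv 0$)'', contradicting the hypothesis of this part. The delicacy is that one must control \emph{all} combinatorial types of flat simultaneously, not merely the most symmetric one, so I expect the periodicity reduction and the bookkeeping of corner labels around degree-$6$ vertices to be the technically heaviest steps. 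Finally, for part~(iii): with $G$ hyperbolic and infinite, and with the hypotheses $(n,m,k)=1$, $m\ne k$, $k\ne 0$ excluding the degenerate and reducible presentations, $G$ is not virtually cyclic, hence is non-elementary hyperbolic --- by the same appeal to \cite{Collins73} used for the case $l(w)>3$ in the introduction.
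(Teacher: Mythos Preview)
Your strategy for the non-hyperbolic direction is essentially the paper's: construct explicit periodic flats (the paper does this via the Flat Plane Theorem, exhibiting a fundamental domain in Figure~\ref{fig:nonpositiveflat} for $H(n,n/2+2)$ and handling $n=8$ separately via $G_8(x_0x_1x_5)\cong H(8,4)$). Your reading of the condition $2(2k-m)\equiv 0$ is correct: under $T(6)$ the branch $2k-m\equiv 0$ is excluded by $(\sigma+)$, so this is exactly $(\gamma+)$.

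The hyperbolic direction, however, diverges from the paper and has two gaps. First, your periodicity reduction (``local configurations being finite in number, a minimal flat must repeat'') is not a valid argument: finitely many local patches does not force periodicity, and you give no mechanism here. More seriously, your expectation that pushing the closing-up congruences around a flat will force exactly ``$n=8$ or $(\gamma+)$'' is wrong. The paper's classification (Lemma~\ref{lem:nonposshortcycles}) shows there are several other length-$6$ cycle types $(\alpha.j),(\beta\pm.j),(\gamma-)$, each allowing degree-$6$ interior vertices. When exactly one such type occurs the paper rules out flats by a local obstruction (Lemma~\ref{lem:nonposalldegree6}: a degree-$6$ vertex with that label forces a neighbour of degree $\geq 7$) and then runs an angle-assignment/curvature argument (Lemma~\ref{lem:curvaturenonpositivecase}) to obtain a linear isoperimetric function directly --- it never reduces to periodic flats. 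When \emph{several} length-$6$ types coexist the combinatorics does not resolve cleanly: Lemma~\ref{lem:nonposmorethanonelength6NewVersion} reduces to the sporadic groups $H(8,4),H(8,6),H(10,4),H(18,4),H(18,16)$, and for the last three the paper appeals to KBMAG (Example~\ref{ex:hypgroupswithn=9,10,18}) rather than a flat-exclusion argument. Your scheme would have to handle these sporadic cases, and nothing in your outline does.

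Finally, for non-elementarity the paper invokes \cite[Corollary~11]{HowieWilliams} (existence of a non-abelian free subgroup), not \cite{Collins73}; the latter is used only for $l(w)>3$.
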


The coprimality hypotheses of Theorems~\ref{thm:positive},\ref{thm:nonpositive} are imposed to avoid the presentations and groups decomposing in canonical ways. Specifically, if $d=(n,k,l)>1$ then the presentation $P_n(x_0x_kx_{l})$ is the disjoint union of $d$ copies of the presentation $P_{n/d}(x_0x_{k/d}x_{l/d})$ (\cite[Lemma~2.4]{CRS}) and so $G_n(x_0x_kx_{l})$ is the free product of $d$ copies of $G_{n/d}(x_0x_{k/d}x_{l/d})$ and $P_n(x_0x_kx_{l})$ satisfies T(6) if and only if  $P_{n/d}(x_0x_{k/d}x_{l/d})$ satisfies $T(6)$. Similarly if $d=(n,m,k)>1$ then $P_n(x_0x_mx_k^{-1})$ is the disjoint union of $d$ copies of $P_{n/d}(x_0x_{m/d}x_{k/d}^{-1})$ (\cite[Lemma~1.2]{BardakovVesnin}), so the analogous conclusions can be drawn in this case. Since a free product $H*K$ is hyperbolic if and only if $H$ and $K$ are hyperbolic -- see, for example, \cite[Theorem~H]{BaumslagGerstenShapiroShort} -- there is no loss in generality in assuming that such decompositions do not arise. The conditions $m\neq k, k\neq 0$ are imposed in Theorem~\ref{thm:nonpositive} to ensure that the relators are cyclically reduced and, as otherwise, the group is trivial.

A relator of a presentation is \em freely redundant \em if it is freely equal to the conjugate of another relator or its inverse. The cyclic presentations $P_n(x_0x_mx_k^{-1})$ have no freely redundant relators and if $(n,k,l)=1$ the presentation $P_n(x_0x_kx_l)$ has a freely redundant relator if and only if $n=3$ and $\{k,l\}=\{1,2\}$, in which case it defines the (non-elementary hyperbolic) group $\Z*\Z$. For this reason, throughout this article we will only consider presentations with no freely redundant relators.

A consequence of Theorem~\ref{thm:positive} and the results of~\cite{EdjvetWilliams} (see also~\cite[Example~10, Propositions~7.7,7.8]{MohamedWilliams}) is that the hyperbolicity status of cyclically presented groups $G$ of the form $G_n(x_0x_kx_l)$ is known, except when $G$ is isomorphic to $G_n(x_0x_1x_{n/2-1})$ for even $n\geq 10$, $n\neq 12,18$.

We prove Theorem~\ref{thm:positive} in Section~\ref{sec:positivecase} and Theorem~\ref{thm:nonpositive} in Section~\ref{sec:nonpositivecase}.

\section{The positive case}\label{sec:positivecase}

Let $P=\pres{\mathcal{X}}{\mathcal{R}}$ be a group presentation such that each relator $r\in \mathcal{R}$ is a cyclically reduced word in the generators. Let $\tilde{\mathcal{R}}$ denote the symmetrized closure of $\mathcal{R}$; that is, the set of all cyclic permutations of elements in $\mathcal{R}\cup \mathcal{R}^{-1}$. The \em star graph \em of $P$ is the undirected graph with vertex set $\mathcal{X} \cup \mathcal{X}^{-1}$, and with an edge joining vertices $x,y$ for each word $xy^{-1}u$ in $\tilde{\mathcal{R}}$. These words occur in pairs: $xy^{-1}u\in \tilde{\mathcal{R}}$ implies that $yx^{-1}u^{-1}\in\tilde{\mathcal{R}}$. Such pairs are called \em inverse pairs \em and the two corresponding edges are identified in $\Gamma$~\cite[page~61]{LyndonSchupp}. Thus if $\Gamma$ is the star graph of the cyclic presentation $P_n(x_0x_kx_l)$ then $\Gamma$ has vertices $x_i,x_i^{-1}$ and edges $x_i-x_{i+k}^{-1}$, $x_i-x_{i+l-k}^{-1}$, $x_i-x_{i-l}^{-1}$ ($0\leq i<n$), which we will refer to as edges of type $X,Y,Z$, respectively.

By~\cite{HillPrideVella} a presentation in which each relator has length at least 3 satisfies T($q$) ($q>3$) if and only if its star graph has no cycle of length less than~$q$. As we are interested in presentations that satisfy T(6), in Section~\ref{sec:positiveshortcycles} we analyze cycles of length at most 6 in the star graph $\Gamma$ of $P_n(x_0x_kx_l)$. In particular, we note that $\Gamma$ always contains a cycle of length at most 6. We show that if two additional cycle types of length 6 arise then only a few small values of $n$ are possible and $G_n(x_0x_kx_l)$ is isomorphic to one of only a few groups, one of which turns out to be hyperbolic. In Section~\ref{sec:nonhyperbolicgroupspositive} we prove that the remainder are not hyperbolic. In Section~\ref{sec:curvaturepositive} we consider the case when at most one further cycle type of length 6 occurs and perform a detailed analysis of van Kampen diagrams (see~\cite[Chapter~5]{LyndonSchupp}) over the defining presentation to prove that  $G_n(x_0x_kx_l)$ has a linear isoperimetric function, and hence is hyperbolic. We then combine these results to prove Theorem~\ref{thm:positive} in Section~\ref{sec:proofofpositive}.

\begin{table}
\begin{center}
\begin{tabular}{|cr|lll|}\hline
& $j$ & \textbf{0} & \textbf{1} & \textbf{2}\\\hline
$(B.j)$ & congruence &$2k-l\equiv0$ & $2l-k\equiv0$ & $k+l\equiv0$\\
& cycle type& $XY$ & $YZ$ &$ZX$\\\hline
$(C.j)$ & congruence & $l\equiv \pm \frac{n}{3}$ & $k\equiv \pm \frac{n}{3}$ & $k-l\equiv \pm \frac{n}{3}$\\
& cycle type& $XZYZ$  & $YXZX$ & $ZYXY$\\\hline
$(D.j)$ & congruence & $2k-l\equiv \frac{n}{2}$ & $2l-k\equiv \frac{n}{2}$ & $k+l\equiv \frac{n}{2}$\\
& cycle type& $(XY)^2$ or & $(YZ)^2$ or&$(ZX)^2$ or\\
&                 & $XYZYXZ$ & $YZXZYX$ & $ZXYXZY$\\\hline
$(E.j)$ & congruence & $2k-l\equiv \pm \frac{n}{3}$ & $2l-k\equiv \pm \frac{n}{3}$ & $k+l\equiv \pm \frac{n}{3}$\\
& cycle type & $(XY)^3$ & $(YZ)^3$ & $(ZX)^3$\\\hline
$(F1.j)$ & congruence & $5k-l\equiv0$ & $5l-4k\equiv0$ & $k+4l\equiv0$\\
 & cycle type& $(XY)^2XZ$ & $(YZ)^2YX$ & $(ZX)^2ZY$\\\hline
$(F2.j)$ & congruence & $4l-5k\equiv0$ & $4k+l\equiv0$ & $5l-k\equiv0$\\
 & cycle type& $(YX)^2YZ$ & $(XZ)^2XY$ & $(ZY)^2ZX$\\\hline
\end{tabular}
\end{center}
\caption{Congruences (mod~$n$) corresponding to short cycles in the star graph of $P_n(x_0x_kx_l)$.\label{tab:BCDEFG}}
\end{table}

\subsection{Analysis of short cycles in the star graph of $P_n(x_0x_kx_l)$}\label{sec:positiveshortcycles}

The following classification of the T(6) cyclic presentations $P_n(x_0x_kx_l)$ in terms of three types of congruences (B),(C),(D) was obtained in~\cite{EdjvetWilliams}. As indicated in Table~\ref{tab:BCDEFG}, the (B) conditions correspond to cycles (of length 2) of the form $XY$,$YZ$,$ZX$; the (C) conditions correspond to cycles (of length 4) of the form $XZYZ$, $YXZX$, $ZYXY$; and the (D) conditions correspond to cycles (of length 4) of the form $(XY)^2$, $(YZ)^2$, $(ZX)^2$, as well as to cycles (of length 6) of the form $XYZYXZ$, $YZXZYX$, $ZXYXZY$. Replacing parameter $k$ by $l-k$ and $l$ by $-k$ corresponds to replacing edge type $X$ by $Y$, $Y$ by $Z$, and $Z$ by $X$ and to replacing a condition $(*.j)$ of Table~\ref{tab:BCDEFG} by $(*.j+1)$ (mod~$3$), and replacing the group $G_n(x_0x_kx_l)$ by the isomorphic copy $G_n(x_0x_{l-k}x_{-k})$. (To see that $G_n(x_0x_kx_l)\cong G_n(x_0x_{l-k}x_{-k})$ set $j=i+k$ in the relators $x_ix_{i+k}x_{i+l}$ of $P_n(x_0x_kx_l)$ and then cyclically permute to get the relators $x_jx_{j+l-k}x_{j-k}$ of $P_n(x_0x_{l-k}x_{-k})$.) Replacing parameter $k$ by $l-k$ corresponds to interchanging the roles of edge types $X$ and $Y$ and so interchanging the roles of conditions $(F1.j)$ and $(F2.j)$, and replacing the group $G_n(x_0x_kx_l)$ by the isomorphic copy $G_n(x_0x_{l-k}x_l)$. (To see that $G_n(x_0x_kx_l)\cong G_n(x_0x_{l-k}x_l)$ replace the generators $x_i$ by $x_i^{-1}$, negate the subscripts, and set $j=-i-l$ in the relators $x_ix_{i+k}x_{i+l}$ and then invert to get the relators $x_jx_{j+l-k}x_{j+l}$ of $P_n(x_0x_{l-k}x_l)$.)

\begin{lemma}[{\cite[Lemma~5.1]{EdjvetWilliams}}]\label{lem:positiveT6}
Let $n\geq 2$ and suppose that $(n,k,l)=1$, $0\leq k,l<n$. Then $P_n(x_0x_kx_l)$ satisfies T(6) if and only if none of the congruences $(B.j)$, $(C.j)$, $(D.j)$ ($0\leq j\leq 2$)  of Table~\ref{tab:BCDEFG} hold.
\end{lemma}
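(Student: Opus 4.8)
The plan is to translate the $T(6)$ condition into a statement about the star graph $\Gamma$ of $P_n(x_0x_kx_l)$ via the Hill--Pride--Vella criterion: $P_n(x_0x_kx_l)$ satisfies $T(6)$ if and only if $\Gamma$ has no closed cycle of length $2$, $3$, $4$, or $5$. Since $\Gamma$ is a finite graph with $2n$ vertices, each of degree $3$ (each vertex $x_i$ meets one edge of each type $X$, $Y$, $Z$, and similarly for $x_i^{-1}$), a cycle is a closed edge-walk that does not immediately backtrack. The first step is to enumerate the possible short closed walks by recording, for each edge traversal, the net change in subscript: traversing an $X$-edge changes a subscript by $\pm k$ (up to the sign bookkeeping between $x_i$ and $x_j^{-1}$ vertices), a $Y$-edge by $\pm(l-k)$, and a $Z$-edge by $\pm l$. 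A closed walk then corresponds to a word in $\{X^{\pm1},Y^{\pm1},Z^{\pm1}\}$ whose associated signed sum of $k$'s, $(l-k)$'s and $l$'s vanishes mod $n$; so I would list all reduced words of length $\le 5$ up to the obvious symmetries and read off the resulting congruences.

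Carrying this out, one checks that no closed reduced walk of odd length can occur (the bipartite-type structure of $\Gamma$, with the $x_i$-vertices on one side and $x_i^{-1}$-vertices on the other, forces even length), which disposes of lengths $3$ and $5$; and that length-$2$ walks give exactly the congruences $2k-l\equiv 0$, $2l-k\equiv 0$, $k+l\equiv 0$, i.e.\ the $(B.j)$ conditions. The remaining work is the length-$4$ analysis: a reduced closed walk of length $4$ uses either two edge types (a pattern like $ABAB$ with $A\ne B$) or all three (a pattern like $ABAC$). The two-type patterns $(XY)^2$, $(YZ)^2$, $(ZX)^2$ yield the congruences $2(2k-l)\equiv 0$, $2(2l-k)\equiv 0$, $2(k+l)\equiv 0$, which combine with the $(B.j)$'s to give the $(D.j)$ conditions $2k-l\equiv n/2$, etc.; and the mixed patterns $XZYZ$, $YXZX$, $ZYXY$ (and their variants, which reduce to these after using the symmetries described before the lemma) yield $l\equiv\pm n/3$, $k\equiv \pm n/3$, $k-l\equiv\pm n/3$, the $(C.j)$ conditions. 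Then $P_n$ fails $T(6)$ iff one of these short cycles exists, i.e.\ iff one of the congruences $(B.j)$, $(C.j)$, $(D.j)$ holds; equivalently $P_n$ satisfies $T(6)$ iff none does.

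The main obstacle is the bookkeeping in the length-$4$ case: there are several reduced $4$-letter patterns over a $3$-letter alphabet with sign choices on each letter, and many of them produce the \emph{same} congruence or a congruence subsumed by a shorter one, so the genuine challenge is to organize the case analysis cleanly (using the rotational symmetry $X\to Y\to Z\to X$ and the reflection swapping $X$ and $Y$ recalled just before the lemma) so that exactly the three families $(B)$, $(C)$, $(D)$ survive with no omissions and no spurious extra conditions. One also has to be careful that a ``cycle'' in the Hill--Pride--Vella sense is a reduced closed walk (no backtracking and, for length-$2$, the two edges are genuinely distinct rather than an inverse pair identified to a single edge), which is what rules out the trivial degree-$3$ backtracks and makes the correspondence with the listed congruences exact. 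Since this lemma is quoted verbatim from \cite[Lemma~5.1]{EdjvetWilliams}, in the paper itself I would simply cite that reference; the sketch above is how one reconstructs it.
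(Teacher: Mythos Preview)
Your proposal is correct and matches the paper's treatment: the paper gives no proof of this lemma at all, simply importing it as \cite[Lemma~5.1]{EdjvetWilliams}, exactly as you anticipate in your final sentence. Your sketch of the underlying argument (bipartiteness killing odd cycles, the $(B.j)$ from length-$2$ multi-edges, and the $(C.j)$, $(D.j)$ from the two length-$4$ patterns, organised via the $X\to Y\to Z\to X$ symmetry) is a faithful reconstruction of how the Edjvet--Williams proof runs, and is consistent with the paper's own description of which cycle types correspond to which congruences in Table~\ref{tab:BCDEFG}.
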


\begin{observation}[{See~\cite[Theorem~3.4]{MohamedWilliams}}]\label{observation:XYZXYZcycle}
Suppose that $(n,k,l)=1$, $0\leq k,l<n$ and that none of the congruences $(B.j)$,$(C.j)$,$(D.j)$ ($0\leq j\leq 2$) of Table~\ref{tab:BCDEFG} hold. Then for each $0\leq i<n$ the sequence of vertices and edges $x_i-x_{i+k}^{-1}-x_{i+2k-l}-x_{i+2k-2l}^{-1}-x_{i+k-2l}-x_{i-l}^{-1}-x_i$ forms a cycle of length~6 of the form $(XYZ)^2$ in the star graph $\Gamma$.
\end{observation}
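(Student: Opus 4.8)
The plan is to exhibit the claimed closed walk explicitly and then verify two things: (i) each consecutive step is genuinely an edge of the star graph $\Gamma$ of the stated type, and (ii) the walk is a \emph{cycle}, i.e.\ the six listed vertices are pairwise distinct (so that the walk does not degenerate to a shorter cycle or backtrack along an edge). For (i), I would simply read off from the edge description given before Lemma~\ref{lem:positiveT6}: an edge of type $X$ joins $x_a$ to $x_{a+k}^{-1}$, an edge of type $Y$ joins $x_a$ to $x_{a+l-k}^{-1}$, and an edge of type $Z$ joins $x_a$ to $x_{a-l}^{-1}$. Reading these with the appropriate orientations, one checks that $x_i - x_{i+k}^{-1}$ is type $X$; that $x_{i+k}^{-1} - x_{i+2k-l}$ is type $Y$ (it is the inverse-pair partner of the $Y$-edge $x_{i+k-l}-x_{i+2k-2l}^{-1}$... more directly, the $Y$-edge incident to $x_{i+k}^{-1}$ goes to $x_{i+k-(l-k)} = x_{i+2k-l}$); that $x_{i+2k-l}-x_{i+2k-2l}^{-1}$ is type $Z$ (a $Z$-edge from $x_a$ reaches $x_{a-l}^{-1}$, and $a-l = i+2k-2l$... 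I need to recheck which of $X,Y,Z$ is meant, but the subscript arithmetic forces a unique type); and the remaining three steps are the images of the first three under the shift $\theta^{k-l}$, hence again of types $X,Y,Z$ in the same pattern $(XYZ)^2$. Adding up the six subscript differences returns $0\bmod n$, confirming the walk is closed.

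The main work is step (ii): showing all six vertices are distinct. The six vertices are $x_i, x_{i+k}^{-1}, x_{i+2k-l}, x_{i+2k-2l}^{-1}, x_{i+k-2l}, x_{i-l}^{-1}$. Three of these have positive sign and three negative, so a coincidence can only occur within each sign class. Among the positive-sign vertices $x_i, x_{i+2k-l}, x_{i+2k-2l}$, equality of any two forces one of the congruences $2k-l\equiv 0$, $2k-2l\equiv 0$ (i.e.\ $k\equiv l$, which also gives $2k-l\equiv k$... let me instead note $2k-2l\equiv 0 \Rightarrow$ combined with what follows), or $l\equiv 0$ modulo~$n$. Among the negative-sign vertices $x_{i+k}^{-1}, x_{i+2k-2l}^{-1}, x_{i-l}^{-1}$, equality forces $2k-2l\equiv 0$, $k+l\equiv 0$ (wait: $x_{i+k}=x_{i-l}$ gives $k+l\equiv 0$), or $2k-l\equiv 0$ modulo~$n$. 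So I must rule out each of $2k-l\equiv0$, $2l-k\equiv 0$, $k+l\equiv 0$, $k\equiv l$, $l\equiv 0$, $k\equiv 0$. The first three are exactly the $(B.0),(B.1),(B.2)$ congruences, excluded by hypothesis. For the last three: $l\equiv 0$, $k\equiv 0$, or $k\equiv l$ each together with the coprimality $(n,k,l)=1$ would force respectively $(n,k)=1$, $(n,l)=1$, $(n,k)=1$, and then e.g.\ $l\equiv 0$ makes the relator $x_0x_kx_0$, and one checks this produces one of the forbidden $(B)$ or $(C)$ or $(D)$ congruences, or directly that such cases are already excluded because e.g.\ $k\equiv l$ turns $(B.0)$ $2k-l\equiv 0$ into $k\equiv 0$; I would organize this as: \textbf{if} $2k-2l\equiv 0\bmod n$ \textbf{then} since $(n,k,l)=1$ we get a contradiction with the excluded $(B)$/$(C)$/$(D)$ list (checking the few subcases according to $\gcd(2,n)$), and similarly for $l\equiv 0$ and $k\equiv 0$.

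I expect the bookkeeping in step (ii) — correctly enumerating which congruences each potential vertex-coincidence yields, and then matching those against the precise list $(B.j),(C.j),(D.j)$ that is excluded, rather than against a superset — to be the only real obstacle; everything else is mechanical subscript arithmetic. In fact, since the statement cites \cite[Theorem~3.4]{MohamedWilliams}, I would also remark that the degeneracy cases $l\equiv 0$, $k\equiv 0$, $k\equiv l$ can alternatively be dispatched by observing that in each of them $P_n(x_0x_kx_l)$ fails T(6) (it contains a shorter star-graph cycle), so they never arise under our standing hypotheses, making the six vertices automatically distinct. Either way, the conclusion is that the closed walk of length~$6$ of pattern $(XYZ)^2$ is an embedded cycle, as claimed.
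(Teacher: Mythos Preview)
Your overall strategy is exactly right, and since the paper itself supplies no proof of this observation (it merely cites \cite[Theorem~3.4]{MohamedWilliams}), your argument is essentially what is needed. However, there is a bookkeeping slip in step~(ii) that makes the distinctness check look harder than it really is. The three positive-sign vertices in the walk are $x_i$, $x_{i+2k-l}$, and $x_{i+k-2l}$ --- not $x_{i+2k-2l}$, which carries an inverse. With the correct list, the three possible coincidences among positive vertices yield precisely
\[
2k-l\equiv 0,\qquad k-2l\equiv 0,\qquad k+l\equiv 0 \pmod n,
\]
that is, $(B.0)$, $(B.1)$, $(B.2)$. The same three congruences (and no others) arise from coincidences among the negative-sign vertices $x_{i+k}^{-1}$, $x_{i+2k-2l}^{-1}$, $x_{i-l}^{-1}$. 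Hence the $(B.j)$ hypotheses alone already force all six vertices to be distinct, and the ``main obstacle'' you anticipate --- disposing of the spurious congruences $k\equiv l$, $l\equiv 0$, $k\equiv 0$ --- never actually arises. Those extra cases, and the appeal to $(C.j)$ and $(D.j)$, are artefacts of the mis-listed vertex.

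One further minor point: your remark that edges $4$--$6$ are the images of edges $1$--$3$ under $\theta^{k-l}$ is not literally correct (the shift does not carry the positive vertex $x_i$ to the negative vertex $x_{i+2k-2l}^{-1}$). It is simplest just to verify edges $4$--$6$ directly from the edge descriptions, exactly as you did for edges $1$--$3$.
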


We now consider how other cycles of length $6$ can arise in $\Gamma$.

\begin{lemma}\label{lem:6cycle}
Let $n\geq 2$ and suppose that $(n,k,l)=1$, $0\leq k,l<n$ and that none of the congruences $(B.j)$,$(C.j)$,$(D.j)$ ($0\leq j\leq 2$) of Table~\ref{tab:BCDEFG} hold.
Then the star graph $\Gamma$ contains a cycle of length 6 of cycle type other than $(XYZ)^2$ if and only if at least one of the congruences $(E.j)$,$(F1.j)$,$(F2.j)$ ($0\leq j\leq 2$) of Table~\ref{tab:BCDEFG} holds, in which case the corresponding entry of the table is a label of the cycle.
\end{lemma}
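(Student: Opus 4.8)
\emph{Proof plan.}
The plan is to exploit the structure of $\Gamma$ under the stated hypotheses. Every edge of $\Gamma$ joins some $x_i$ to some $x_j^{-1}$, so $\Gamma$ is bipartite; and since none of the congruences $(B.j)$ hold, the three neighbours $x_{i+k}^{-1},x_{i+l-k}^{-1},x_{i-l}^{-1}$ of $x_i$ are pairwise distinct (and symmetrically for negative vertices), so $\Gamma$ is a simple cubic bipartite graph in which each vertex is incident with exactly one edge of each of the types $X,Y,Z$. Consequently a cycle of $\Gamma$ never uses the same type at two consecutive edges (else it would backtrack), so the sequence $t_1t_2\cdots t_6$ of types read around a cycle of length $6$ is a cyclic word of length $6$ over $\{X,Y,Z\}$ in which no two cyclically consecutive letters are equal; reversing the direction of traversal reverses this word, so by the \emph{cycle type} of a $6$-cycle we mean this word up to rotation and inversion.

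First I would enumerate all such words. A letter occurring at least four times among six cyclic positions must occupy two consecutive ones, so the multiset of letters used is $\{X^3,Y^3\}$ (or one of its two analogues), $\{X^2,Y^2,Z^2\}$, or $\{X^3,Y^2,Z\}$ (or one of its five analogues). In the first case the word must be $(XY)^3$; in the third the tripled letter must occupy three alternate positions, forcing the word up to rotation to be $(XY)^2XZ$ and giving the six words of rows $(F1.j),(F2.j)$ of Table~\ref{tab:BCDEFG}; and a short case analysis shows that in the remaining case there are exactly four words up to rotation and inversion, namely $(XYZ)^2$ together with the three words $XYZYXZ,\,YZXZYX,\,ZXYXZY$ of rows $(D.j)$.

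Next, for each such word $t_1\cdots t_6$ I would record the congruence detecting a cycle of that type. Tracking subscripts along a closed path based at a positive vertex and using bipartiteness, the $i$-th step alters the subscript by $(-1)^{i+1}\delta(t_i)$ where $\delta(X)=k$, $\delta(Y)=l-k$, $\delta(Z)=-l$; hence a closed path of type $t_1\cdots t_6$ based at $x_a$ exists, for every $a$, exactly when $\sum_{i=1}^{6}(-1)^{i+1}\delta(t_i)\equiv 0\pmod n$. Evaluating (using $\delta(X)+\delta(Y)+\delta(Z)=0$): for $(XYZ)^2$ this is the trivial identity, recovering Observation~\ref{observation:XYZXYZcycle}; for $XYZYXZ,\,YZXZYX,\,ZXYXZY$ the congruences $2(2k-l)\equiv 0$, $2(2l-k)\equiv 0$, $2(k+l)\equiv 0$; for $(XY)^3,(YZ)^3,(ZX)^3$ the congruences $3(2k-l)\equiv 0$, $3(2l-k)\equiv 0$, $3(k+l)\equiv 0$; and for the six words of rows $(F1.j),(F2.j)$ precisely the congruences recorded there. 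Since the $(B.j)$ fail, $2k-l,\,2l-k,\,k+l$ are all $\not\equiv 0$; hence $2(2k-l)\equiv 0$ would force $2k-l\equiv n/2$, i.e.\ $(D.0)$ — excluded, and similarly for the other two $(D)$-words — whereas $3(2k-l)\equiv 0$ holds if and only if $3\mid n$ and $2k-l\equiv\pm n/3$, i.e.\ $(E.0)$, and similarly for $(E.1),(E.2)$. Thus under the hypotheses no $6$-cycle of type $XYZYXZ$, $YZXZYX$ or $ZXYXZY$ can occur, a $6$-cycle of type $(XY)^3$ (resp.\ $(YZ)^3$, $(ZX)^3$) occurs iff $(E.0)$ (resp.\ $(E.1)$, $(E.2)$) holds, and a $6$-cycle whose type is one of the six words of rows $(F1.j),(F2.j)$ occurs iff the corresponding congruence holds.

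The step needing the most care is the converse direction: verifying that whenever one of $(E.j),(F1.j),(F2.j)$ holds the resulting closed path genuinely is a $6$-cycle, i.e.\ passes through six distinct vertices. Since such a congruence makes the path first close at step $6$, it can only fail to be a $6$-cycle if some contiguous sub-path of length $2$ or $4$ also closes up (lengths $3$ and $5$ being impossible in the bipartite $\Gamma$), and these events are controlled, respectively, by the congruences $(B.j)$ and $(D.j)$, which are excluded. Assembling these facts, a $6$-cycle of type other than $(XYZ)^2$ exists precisely when one of the congruences $(E.j),(F1.j),(F2.j)$ holds, and the corresponding row of Table~\ref{tab:BCDEFG} then names the type of such a cycle, which is the lemma.
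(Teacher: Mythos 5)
Your proposal is correct and takes essentially the same route as the paper: classify the possible length-6 edge-type words (using that consecutive edges of a cycle have distinct types), identify the $(D.j)$-types as excluded, and match the remaining non-$(XYZ)^2$ types to the congruences $(E.j)$, $(F1.j)$, $(F2.j)$; you additionally write out the subscript computation and the closed-walk-versus-cycle check that the paper leaves implicit. One small imprecision: a closed sub-walk of length 4 is governed by the $(C.j)$ congruences as well as the $(D.j)$ ones, but since both families are excluded by hypothesis this does not affect the argument.
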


\begin{proof}
Let $C$ be a cycle of length 6 in $\Gamma$. Then there are no subpaths of $C$ of the form $XX$, $YY$ or $ZZ$. If $C$ involves each of the edge types $X,Y,Z$ twice then $C$ is a cycle of the form $(XYZ)^2$, $XYZYXZ$, $YZXZYX$, or $ZXYXZY$. But these last three cycles only occur if the congruence $(D.j)$ holds, contrary to hypothesis.
If $C$ does not involve an edge of type $X$ (resp.\,$Y$, resp.\,$Z$) then $C$ is a cycle of the form $(YZ)^3$ (resp.\,$(XZ)^3$, resp.\,$(XY)^3$), which correspond to the conditions $(E.j)$.
If $C$ involves exactly one edge of type $X$ (resp.\,$Y$, resp.\,$Z$) then $C$ is a cycle of the form $(YZ)^2YX$ or $(ZY)^2ZX$ (resp.\,$(ZX)^2ZY$ or $(XZ)^2XY$, resp.\,$(XY)^2XZ$ or $(YX)^2YZ$), which correspond to the conditions $(F1.j)$ or $(F2.j)$.

Conversely, if any of the congruences $(E.j),(F1.j), (F2.j)$ hold then the corresponding entry of Table~\ref{tab:BCDEFG} is the label of a cycle of length~6 in $\Gamma$.
\end{proof}

\begin{lemma}\label{lem:combinationsofEFG}
Let $n\geq 2$ and suppose that $(n,k,l)=1$, $0\leq k,l<n$ and that none of the congruences $(B.j)$,$(C.j)$,$(D.j)$ ($0\leq j\leq 2$) hold. If more than one of the congruences $(E.j)$,$(F1.j)$, $(F2.j)$ ($0\leq j\leq 2$) hold then one of the following holds:
\begin{itemize}
\item[(a)] $n=7$ and ($l\equiv 5k$ or $k\equiv 5l$~mod~$n$);
\item[(b)] $n=8$ and ($l\equiv 5k$ or $k\equiv 5l$~mod~$n$);
\item[(c)] $n=21$ and ($l\equiv 5k$ or $k\equiv 5l$~mod~$n$);
\item[(d)] $n=24$ and ($l\equiv 5k$ or $k\equiv -4l$ or $l\equiv -4k$~mod~$n$);
\item[(e)] $n=27$ and ($l\equiv 5k$ or $k\equiv 5l$ or $4k\equiv 5l$ or $4l\equiv 5k$ or $k\equiv -4l$ or $l\equiv -4k$~mod~$n$).
\end{itemize}
In each case $G\cong G_n(x_0x_1x_5)$.
\end{lemma}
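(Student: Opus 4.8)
The proof is an exhaustive, symmetry-reduced analysis of which pairs among the nine congruences $(E.0),(E.1),(E.2),(F1.0),(F1.1),(F1.2),(F2.0),(F2.1),(F2.2)$ can hold simultaneously. The plan is the following. First I would record the available symmetries. Besides the isomorphisms $G_n(x_0x_kx_l)\cong G_n(x_0x_{l-k}x_{-k})\cong G_n(x_0x_{l-k}x_l)$ recorded before Lemma~\ref{lem:positiveT6}, there are the multiplier isomorphisms $G_n(x_0x_kx_l)\cong G_n(x_0x_{ck}x_{cl})$, valid for any $c$ that is a unit mod~$n$ and induced by $x_i\mapsto x_{ci}$. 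Together these generate a group acting on the rows of Table~\ref{tab:BCDEFG} through a quotient isomorphic to $S_3$, under which the three rows $(E.j)$ form the standard three-element orbit and the six rows $(F1.j),(F2.j)$ form a single orbit; moreover, a suitable multiplier flips the sign $\pm n/3$ in any chosen $(E.j)$-congruence. Hence, after applying a symmetry, I may assume that one of the two congruences that hold is $(E.0)$ or $(F1.0)$, and in the former case the stabiliser of $(E.0)$ still permutes the remaining rows, so I may further take the second congruence to be one of $(E.1),(F1.0),(F1.1),(F1.2)$.

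Next I would dispose of the case in which two distinct $(E.j)$-congruences hold. Combining them and using that $n/3$ has additive order~$3$ modulo~$n$, one is forced into $k,l\in\{0,n/3,2n/3\}$, whence $(n,k,l)=1$ gives $n=3$; but $n=3$ fails $T(6)$ by Lemma~\ref{lem:positiveT6} (some $(C.j)$ holds), so this is excluded. Thus at least one of the two congruences is of type~$(F)$, and it remains to treat the finitely many pairs $\{(E.0),(F1.j)\}$ ($j=0,1,2$) and $\{(F1.0),(F1.h)\}$, $\{(F1.0),(F2.h)\}$. In each of these I would combine the two linear congruences modulo~$n$ -- clearing the term $\pm n/3$ in an $(E.j)$-congruence by multiplying through by~$3$ -- so as to eliminate~$l$, obtaining $Nk\equiv0\pmod n$, and likewise $Nl\equiv0\pmod n$, for an explicit constant~$N$; since $(n,k,l)=1$ this yields $n\mid N$. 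For example $(F1.0)\wedge(F1.1)$ gives $N=21$, $(F1.0)\wedge(F2.0)$ gives $N=15$, $(F1.0)\wedge(F2.2)$ gives $N=24$ and $(E.0)\wedge(F1.2)$ gives $N=27$, the remaining pairs giving $N\in\{9,18\}$; and whenever an $(E.j)$-congruence is involved one additionally has $3\mid n$.

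For each of the finitely many candidate values of~$n$ obtained this way I would then invoke the $T(6)$ hypothesis via Lemma~\ref{lem:positiveT6}: a short computation shows that every value other than $n\in\{7,8,21,24,27\}$ is eliminated because some $(B.j),(C.j),(D.j)$ is then forced (for instance, $n=9$ with $l\equiv5k$ forces $2l-k\equiv0$, i.e.\ $(B.1)$; $n=15$ with $l\equiv5k$ forces $l\equiv\pm n/3$, i.e.\ $(C.0)$; and $n=6$, resp.\ $n=12$, with $l\equiv5k$ forces $k+l\equiv0$, resp.\ $k-l\equiv\pm n/3$, i.e.\ $(B.2)$, resp.\ $(C.2)$), and the same computation pins down the admissible $(k,l)$, up to symmetry, to exactly the families displayed in (a)--(e). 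Finally, in each surviving case I would verify $G\cong G_n(x_0x_1x_5)$ using the isomorphisms above together with \cite[Lemma~2.1]{EdjvetWilliams}: for $n=7,8$ this is precisely the identification already recorded in the text, while for $n=21,24,27$ each admissible $(k,l)$ is carried to $(1,5)$ by a multiplier composed with one of the substitutions $(k,l)\mapsto(l-k,-k)$ or $(k,l)\mapsto(l-k,l)$.

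I expect the bulk of the work -- and the only real subtlety -- to be the case analysis in the previous two paragraphs. There are of the order of $\binom{9}{2}$ pairs before the symmetry reduction, the $(E.j)$-congruences carry signs that must be tracked (or normalised away using the multipliers), and at each small value of~$n$ one must carefully separate the parameter families that survive $T(6)$ from those that do not. No individual computation is hard; the care lies in the bookkeeping, and in particular in not overlooking a value of~$n$ that survives $T(6)$.
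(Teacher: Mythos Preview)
Your proposal is correct and follows essentially the same strategy as the paper: exploit the $S_3$ symmetry on the nine congruences, combine each surviving pair to constrain~$n$, eliminate small~$n$ via the $T(6)$ hypothesis, and identify the group using \cite[Lemma~2.1]{EdjvetWilliams}. The paper organises the case analysis a little differently: rather than uniformly eliminating a variable to obtain $n\mid N$ and then checking divisors of~$N$, it shows directly that many pairs force one of the forbidden congruences---for instance $(E.j)\wedge(F1.j)\Rightarrow(B.j{+}1)$ and $(F1.j)\wedge(F2.{-j})\Rightarrow(C.j)$---and so disposes of those pairs in a single line, deriving explicit bounds on~$n$ only for the handful of pairs that survive. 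One small slip to fix in your outline: in the two-$E$ case, combining the congruences yields only $9k\equiv 9l\equiv 0$, hence $n\mid 9$ (not $n=3$ as you wrote); the value $n=9$ must then be excluded separately, which is immediate since the solutions force $k+l\equiv 0$, i.e.\ $(B.2)$. The paper avoids this by noting that two $(E.j)$'s force all three and then that $(E.0)$ together with $(E.2)$ gives $2k-l\equiv\pm(k+l)$, whence $(B.1)$ or $(C.1)$ directly.
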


\begin{proof}
(Throughout this proof, the $j$ value in a condition $(*.j)$ is to be taken mod~$3$.)
If $(E.j)$ and $(F1.j)$ hold then $(B.j+1)$ holds, a contradiction.
If $(E.j)$ and $(F1.j+1)$ hold then $(B.j+2)$ holds.
If $(E.j)$ and $(F2.-j)$ hold then $(B.j+2)$ holds.
If $(E.j)$ and $(F2.1-j)$ hold then $(B.j+1)$ or $(D.j+1)$ holds.
If $(F1.j)$ and $(F2.-j)$ hold then $(C.j)$ holds.
If $(F1.j)$ and $(F2.1-j)$ hold then $(B.j+1)$ holds.
Suppose now that any two of the $(E.j)$ conditions hold; then all three of them hold. Since $(B.0)$ does not hold condition $(E.0)$ implies $2k-l\equiv \pm n/3$~mod~$n$ and since $(B.2)$ does not hold condition $(E.2)$ implies $k+l\equiv \pm n/3$~mod~$n$. Thus $2k-l\equiv \epsilon (k+l)$ where $\epsilon =\pm 1$. If $\epsilon =+1$ then $(B.1)$ holds, a contradiction; and if $\epsilon =-1$ then $(C.0)$ or $(C.1)$ holds, a contradiction.

Suppose that two of the $(F1.j)$ conditions hold. Then all of them hold so, in particular, $l\equiv 5k$~mod~$n$. Summing the congruences $(F1.0)$ and $(F1.1)$ gives that $k\equiv -4l$~mod~$n$ and so (by $(F1.0)$) $21l\equiv 0$~mod~$n$. Moreover $1=(n,k,l)=(n,-4l,l)=(n,l)$ so $n|21$. If $n=3$ then $(F1.0)$ implies that $(B.0)$ holds, so $n=7$ or $21$. An analogous argument shows that if two of the $(F2.j)$ conditions hold then $k\equiv 5l$ and $n=7$ or $21$, thus giving cases (a),(c) of the statement.

Suppose $(F1.j)$ and $(F2.2-j)$ hold. We claim that $n=8$ or $24$; it then follows from one of the congruences that $l\equiv 5k$ or $k\equiv 5l$~mod~$n$ (by multiplying by 5, if necessary), giving cases (b),(d). We prove this in the case $(F1.0)$ and $(F2.2)$, the other cases being similar. The congruence $(F1.0)$ implies $l\equiv 5k$~mod~$n$, so substituting into $(F2.2)$ gives $24k\equiv 0$~mod~$n$, but $1=(n,k,l)=(n,k)$ so $n|24$. If $n\leq 6$ then some condition $(B.j),(C.j),(D.j)$ holds, and if $n=12$ then $(B.2)$ or $(D.2)$ holds, a contradiction; thus $n=8$ or $24$, as claimed.

Suppose that either ($(E.j)$ and $(F1.j+2)$) or ($(E.j)$ and $(F2.2-j)$) hold. We claim that $n=27$. We prove this in the case where $(E.0)$ and $(F1.2)$ hold, the other cases being proved analogously. The congruence $(F1.2)$ implies $k\equiv -4l$~mod~$n$ so $(E.0)$ implies $27l\equiv 0$~mod~$n$, but $1=(n,k,l)=(n,l)$ so $n|27$. If $n=3$ or $9$ then $(B.0)$ holds, and hence $n=27$, as claimed.

The final assertion that $G_n(x_0x_kx_l)\cong G_n(x_0x_1x_5)$ in each case follows from~\cite[Lemma~2.1]{EdjvetWilliams}.
\end{proof}

We now deal with the group arising in case~(e) of Lemma~\ref{lem:combinationsofEFG}.

\begin{example}[The group $G_{27}(x_0x_1x_5)$]\label{ex:n=27}
\em Using KBMAG~\cite{KBMAG} it is straightforward to show that the group $G_{27}(x_0x_1x_5)$ is hyperbolic, and since it contains a non-abelian free subgroup (by~\cite[Corollary~5.2]{EdjvetWilliams}), it is non-elementary hyperbolic.\em
\end{example}

\subsection{Non-hyperbolic groups $G_n(x_0x_kx_l)$}\label{sec:nonhyperbolicgroupspositive}

In this section we consider the groups arising in cases (a)-(d) of Lemma~\ref{lem:combinationsofEFG}. First we recall that the group $G_7(x_0x_1x_5)$ is not hyperbolic; see~\cite[Example~3.8]{MohamedWilliams} for a discussion.

\begin{lemma}[{\cite{EdjvetVdovina,Cartwrightetal}}]\label{lem:n=7nonhyp}
The group $G_7(x_0x_1x_5)$ is not hyperbolic.
\end{lemma}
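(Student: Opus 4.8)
The plan is to exhibit a $\Z\times\Z$ (or more generally a Baumslag--Solitar $BS(1,1)$) subgroup, since a group containing a subgroup isomorphic to $\Z^2$ cannot be hyperbolic. The most direct route is to identify $G_7(x_0x_1x_5)$ with a group that is already known in the literature to act on a Euclidean building, or to split as an amalgam/HNN extension over a subgroup in which a flat is visible. In fact, $P_7(x_0x_1x_5)$ is a $C(3)$--$T(6)$ presentation, so by the Ivanov--Schupp criterion cited in the introduction (\cite[Corollary, page~1860]{IvanovSchupp}) non-hyperbolicity is equivalent to the existence of a minimal flat over the presentation; thus the cleanest approach is to construct such a flat explicitly. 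Concretely, I would build a doubly periodic van Kampen diagram over $P_7(x_0x_1x_5)$ whose underlying tessellation of the plane realises the link condition (every interior vertex has degree $\geq 6$, every interior face is a triangle coming from a defining relator), which forces the quotient torus to carry a $\Z^2$ of deck transformations mapping injectively into $G$.

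The key steps, in order, would be: (i) recall from the references \cite{EdjvetVdovina} and \cite{Cartwrightetal} the explicit flat — in \cite{EdjvetVdovina} the relevant presentation is recognised as that of a group acting on the Euclidean building $\tilde A_2$ (or a related $(3,6)$-complex), so one simply cites that $G_7(x_0x_1x_5)$ is one of the groups on their list and inherits a $\Z^2$; (ii) alternatively, verify directly with a small computation (e.g.\ using the coset enumeration / automatic-groups machinery, as in Example~\ref{ex:n=27}) that $G_7(x_0x_1x_5)$ fails to be automatic or fails the relevant isoperimetric test, and then invoke the structural result of \cite{Cartwrightetal} identifying it as a lattice in a product; (iii) conclude that $G$ contains $\Z^2$, hence is not hyperbolic. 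Since the statement is attributed jointly to \cite{EdjvetVdovina} and \cite{Cartwrightetal}, the honest ``proof'' here is essentially a pointer: $G_7(x_0x_1x_5)$ appears (up to the isomorphisms catalogued in \cite[Lemma~2.1]{EdjvetWilliams}) as one of the torsion-free $\tilde A_2$-lattices classified in those papers, and such lattices contain flats.

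The main obstacle is bookkeeping rather than mathematics: one must match the cyclic presentation $P_7(x_0x_1x_5)$ — after the normalising moves that replace $(n,k,l)$ by its images under the symmetries described before Lemma~\ref{lem:positiveT6} — with the particular presentation appearing in \cite{EdjvetVdovina}/\cite{Cartwrightetal}, checking that the star graph and face structure genuinely coincide so that the $\tilde A_2$-building (equivalently, the flat) really is present. Once that identification is pinned down, everything else is immediate from the cited classification together with the standard fact that a hyperbolic group cannot contain $\Z^2$ (see \cite[Chapter~III.$\Gamma$]{} — but since the paper has not defined such a reference, I would instead cite the already-invoked \cite{BaumslagGerstenShapiroShort} or simply state the fact as well known). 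I would therefore keep the write-up to a few lines, quoting the two references for the construction of the flat and noting that hyperbolicity is thereby excluded.
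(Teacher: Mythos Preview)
Your proposal is correct and matches the paper's treatment: the paper gives no proof of this lemma at all, simply stating it with attribution to \cite{EdjvetVdovina} and \cite{Cartwrightetal} (and pointing to \cite[Example~3.8]{MohamedWilliams} for discussion). Your sketch---identify $G_7(x_0x_1x_5)$, via the isomorphisms of \cite[Lemma~2.1]{EdjvetWilliams}, with one of the torsion-free $\tilde A_2$-lattices classified in those references, and conclude non-hyperbolicity from the presence of flats (equivalently a $\Z^2$ subgroup)---is exactly the content behind those citations, so there is nothing to add.
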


We now show that the group $G_8(x_0x_1x_5)$ is not hyperbolic. We do this by an application of the Flat Plane Theorem~\cite{Bridson} (an alternative approach would be to use~\cite[Corollary, page 1860]{IvanovSchupp}).

\begin{lemma}\label{lem:n=8nonhyp}
The group $G_8(x_0x_1x_5)$ is not hyperbolic.
\end{lemma}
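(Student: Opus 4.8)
The plan is to exhibit a flat plane in the Cayley $2$-complex of $G_8(x_0x_1x_5)$ (equivalently, in the universal cover of the presentation $2$-complex $K$ of $P_8(x_0x_1x_5)$), so that the Flat Plane Theorem~\cite{Bridson} obstructs hyperbolicity. Since $l(w)=3$ and the presentation is $T(6)$, every vertex of a reduced van Kampen diagram over $P_8(x_0x_1x_5)$ has degree at least $6$ in the interior (each relator being a triangle and each piece having length $1$), so the natural metric to use is the one in which each $2$-cell is a Euclidean equilateral triangle. The aim is then to build a $\theta$-equivariant combinatorial tiling of the Euclidean plane $\mathbb{E}^2$ by such triangles, where all vertices have degree exactly $6$ (so the metric is genuinely flat, with no positive curvature concentrated anywhere), whose $2$-cells are all labelled by relators of $P_8(x_0x_1x_5)$, and then to check that the resulting label-preserving map to $K$ lifts to an isometric embedding of $\mathbb{E}^2$ into the universal cover; this is exactly a \emph{minimal flat over the presentation} in the sense of~\cite{IvanovSchupp}, which is why that corollary gives an alternative route.

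Concretely, I would look for the flat among the regular tilings invariant under a group of translations, i.e.\ specify the vertex labelling of the standard triangular tiling of $\mathbb{E}^2$ by a homomorphism from the triangular lattice $\mathbb{Z}^2$ (or a finite-index sublattice thereof) to $\mathbb{Z}_8 = \langle x_0,\dots,x_7\rangle$-indices, chosen so that the three edge-vectors of each upward triangle receive index-increments consistent with the three corners $x_i, x_{i+1}, x_{i+5}$ of the relator $x_ix_{i+1}x_{i+5}$ (and likewise for downward triangles, using cyclic permutations and inverses of the relator). The congruences that make this possible are precisely the ones recorded in Table~\ref{tab:BCDEFG} that are compatible with $T(6)$ for $n=8$: with $k=1,l=5$ one has $5k-l\equiv 0$~mod~$8$ (condition $(F1.0)$) and $5l-k=24\equiv 0$~mod~$8$ (condition $(F2.2)$, matching Lemma~\ref{lem:combinationsofEFG}(b)), which is what guarantees that the extra length-$6$ cycles $(XY)^2XZ$ and $(ZY)^2ZX$ close up in $\Gamma$; these cycles are exactly the vertex links of the flat, so their existence is both necessary and, with a little work, sufficient to assemble the periodic tiling. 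I would present the flat as an explicit finite patch (a fundamental domain for the translation action, a small region of a dozen or so triangles with their $x_i$-labels) together with the two translations generating the period lattice, and then note that because all internal vertices have degree $6$ and all $2$-cells carry genuine relators, this patch tiles $\mathbb{E}^2$ label-consistently and flatly.

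The remaining verifications are: (i) the labelled patch is \emph{reduced}, i.e.\ no two $2$-cells sharing an edge are mirror images with the same boundary label (this is where $T(6)$, via the absence of the $(B.j)$ conditions, does the work, since a cancelling pair would force a cycle of length $<6$); (ii) the link of every vertex is a single embedded cycle in $\Gamma$ of length exactly $6$, so the link is a circle and the complex is a surface, locally isometric to $\mathbb{E}^2$; (iii) the developing map of this nonpositively curved (indeed flat) complex into the $\mathrm{CAT}(0)$ universal cover $\widetilde K$ is injective, which follows from Cartan--Hadamard-type arguments since a simply connected, locally $\mathrm{CAT}(0)$ complex is $\mathrm{CAT}(0)$ and the map is a local isometry; hence $\mathbb{E}^2$ embeds isometrically into $\widetilde K$. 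By the Flat Plane Theorem a $\mathrm{CAT}(0)$ group containing an isometrically embedded flat plane is not hyperbolic, and since $\pi_1(K)=G_8(x_0x_1x_5)$ acts geometrically on $\widetilde K$, the group is not hyperbolic.

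The main obstacle I expect is step (i)--(ii): namely pinning down an explicit periodic labelling of the triangular tiling by residues mod $8$ that is simultaneously (a) consistent around every triangle with the relator $x_ix_{i+1}x_{i+5}$ up to cyclic permutation and inversion, (b) reduced, and (c) has every vertex-link a simple $6$-cycle rather than a shorter cycle or a non-embedded closed walk. Getting all three at once forces the period lattice and the choice of which length-$6$ star-graph cycle sits at each vertex; I would either do this by hand, tracking the edge-type pattern $(XY)^2XZ$ around one vertex and propagating it by the shift, or—following the parenthetical remark in the statement—bypass the geometry entirely and instead verify directly, via~\cite[Corollary, page~1860]{IvanovSchupp}, that the periodic diagram just constructed is a minimal flat over $P_8(x_0x_1x_5)$, which by their characterisation immediately yields non-hyperbolicity. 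Once the flat is written down explicitly, the rest is routine.
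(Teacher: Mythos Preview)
Your proposal is correct and follows essentially the same route as the paper: both arguments endow the presentation $2$-complex with the equilateral-triangle metric (so that the universal cover is $\mathrm{CAT}(0)$ by $C(3)$--$T(6)$), exhibit an explicit periodic reduced van~Kampen diagram over $P_8(x_0x_1x_5)$ whose interior vertices all have degree~$6$, and invoke the Flat Plane Theorem~\cite{Bridson} (with~\cite{IvanovSchupp} noted as an alternative). The paper simply writes down the fundamental patch directly---a parallelogram of $16$ triangles with labelled edges, together with the shifts $\theta^2$ needed to tile---rather than deriving it from the $(F1.0)$/$(F2.2)$ cycle structure as you propose, but the content is the same.
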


\begin{proof}
Since the presentation $P_8(x_0x_1x_5)$ satisfies C(3)-T(6) and each relator has length~3, each face in the geometric realisation $\tilde{C}$ of the Cayley complex of $P$ (obtained by assigning length~1 to each edge) is an equilateral triangle, and so $\tilde{C}$ satisfies the CAT(0) inequality. Consider the geometric realisation $\Delta_0$ of the reduced van Kampen diagram given in Figure~\ref{fig:positiveflat} and for each $0\leq i<n$ let $\Delta_i$ be obtained from $\Delta_0$ by applying the shift $\theta^i$ to each edge. Then placing $\Delta_0,\Delta_2,\Delta_4,\Delta_6$ one above the other gives the geometric realisation $\Delta$ of a reduced van Kampen diagram. Copies of $\Delta$ tile the Euclidean plane without cancellation of faces. Thus there is an isometric embedding of the Euclidean plane in $\tilde{C}$, and so the result follows from the Corollary to Theorem~A in~\cite{Bridson}.
\end{proof}

\begin{figure}
\begin{center}
\psset{xunit=1.1cm,yunit=1.1cm,algebraic=true,dimen=middle,dotstyle=o,dotsize=7pt 0,linewidth=1.6pt,arrowsize=3pt 2,arrowinset=0.25}

\begin{pspicture*}(0.5,-2.2)(11.5,2)
\psline[linewidth=2.pt,ArrowInside=->,ArrowInsidePos=0.5](2.,0.)(1.,1.71)
\psline[linewidth=2.pt,ArrowInside=->,ArrowInsidePos=0.5](1.,1.71)(3.,1.712)
\psline[linewidth=2.pt,ArrowInside=->,ArrowInsidePos=0.5](3.,1.712)(2.,0.)
\psline[linewidth=2.pt,ArrowInside=->,ArrowInsidePos=0.5](3.,-1.71)(2.,0.)
\psline[linewidth=2.pt,ArrowInside=->,ArrowInsidePos=0.5](2.,0.)(4.,0.)
\psline[linewidth=2.pt,ArrowInside=->,ArrowInsidePos=0.5](4.,0.)(3.,1.712)
\psline[linewidth=2.pt,ArrowInside=->,ArrowInsidePos=0.5](3.,1.712)(5.,1.71)
\psline[linewidth=2.pt,ArrowInside=->,ArrowInsidePos=0.5](5.,1.71)(4.,0.)
\psline[linewidth=2.pt,ArrowInside=->,ArrowInsidePos=0.5](4.,0.)(6.,0.)
\psline[linewidth=2.pt,ArrowInside=->,ArrowInsidePos=0.5](6.,0.)(5.,1.71)
\psline[linewidth=2.pt,ArrowInside=->,ArrowInsidePos=0.5](5.,1.71)(7.,1.71)
\psline[linewidth=2.pt,ArrowInside=->,ArrowInsidePos=0.5](7.,1.71)(6.,0.)
\psline[linewidth=2.pt,ArrowInside=->,ArrowInsidePos=0.5](6.,0.)(8.,0.)
\psline[linewidth=2.pt,ArrowInside=->,ArrowInsidePos=0.5](8.,0.)(7.,1.71)
\psline[linewidth=2.pt,ArrowInside=->,ArrowInsidePos=0.5](7.,1.71)(9.,1.71)
\psline[linewidth=2.pt,ArrowInside=->,ArrowInsidePos=0.5](9.,1.71)(8.,0.)
\psline[linewidth=2.pt,ArrowInside=->,ArrowInsidePos=0.5](8.,0.)(10.,0.)
\psline[linewidth=2.pt,ArrowInside=->,ArrowInsidePos=0.5](10.,0.)(9.,1.71)
\psline[linewidth=2.pt,ArrowInside=->,ArrowInsidePos=0.5](4.,0.)(3.,-1.71)
\psline[linewidth=2.pt,ArrowInside=->,ArrowInsidePos=0.5](3.,-1.71)(5.,-1.71)
\psline[linewidth=2.pt,ArrowInside=->,ArrowInsidePos=0.5](5.,-1.71)(4.,0.)
\psline[linewidth=2.pt,ArrowInside=->,ArrowInsidePos=0.5](6.,0.)(5.,-1.71)
\psline[linewidth=2.pt,ArrowInside=->,ArrowInsidePos=0.5](5.,-1.71)(7.,-1.71)
\psline[linewidth=2.pt,ArrowInside=->,ArrowInsidePos=0.5](7.,-1.71)(6.,0.)
\psline[linewidth=2.pt,ArrowInside=->,ArrowInsidePos=0.5](8.,0.)(7.,-1.71)
\psline[linewidth=2.pt,ArrowInside=->,ArrowInsidePos=0.5](7.,-1.71)(9.,-1.71)
\psline[linewidth=2.pt,ArrowInside=->,ArrowInsidePos=0.5](9.,-1.71)(8.,0.)
\psline[linewidth=2.pt,ArrowInside=->,ArrowInsidePos=0.5](10.,0.)(9.,-1.71)
\psline[linewidth=2.pt,ArrowInside=->,ArrowInsidePos=0.5](9.,-1.71)(11.,-1.71)
\psline[linewidth=2.pt,ArrowInside=->,ArrowInsidePos=0.5](11.,-1.71)(10.,0.)
\begin{scriptsize}
\psdots[dotstyle=*,linecolor=black](1.,1.71)
\psdots[dotstyle=*,linecolor=black](3.,1.712)
\psdots[dotstyle=*,linecolor=black](5.,1.71)
\psdots[dotstyle=*,linecolor=black](7.,1.71)
\psdots[dotstyle=*,linecolor=black](9.,1.71)
\psdots[dotstyle=*,linecolor=black](3.,-1.71)
\psdots[dotstyle=*,linecolor=black](5.,-1.71)
\psdots[dotstyle=*,linecolor=black](7.,-1.71)
\psdots[dotstyle=*,linecolor=black](9.,-1.71)
\psdots[dotstyle=*,linecolor=black](11.,-1.71)
\psdots[dotstyle=*,linecolor=black](8.,0.)
\psdots[dotstyle=*,linecolor=black](2.,0.)
\psdots[dotstyle=*,linecolor=black](4.,0.)
\psdots[dotstyle=*,linecolor=black](6.,0.)
\psdots[dotstyle=*,linecolor=black](10.,0.)
\rput[bl](1.1,0.82){$x_0$}
\rput[bl](1.98,1.8){$x_3$}
\rput[bl](2.7,0.82){$x_4$}
\rput[bl](2.0,-0.88){$x_2$}
\rput[bl](2.98,-0.3){$x_5$}
\rput[bl](3.6,0.82){$x_1$}
\rput[bl](4.,1.8){$x_5$}
\rput[bl](4.7,0.82){$x_0$}
\rput[bl](4.98,-0.3){$x_3$}
\rput[bl](5.6,0.82){$x_4$}
\rput[bl](5.98,1.8){$x_7$}
\rput[bl](6.7,0.82){$x_0$}
\rput[bl](6.98,-0.3){$x_1$}%
\rput[bl](7.6,0.82){$x_5$}
\rput[bl](7.98,1.8){$x_1$}
\rput[bl](8.7,0.82){$x_4$}
\rput[bl](8.98,-0.3){$x_7$}
\rput[bl](9.6,0.82){$x_0$}
\rput[bl](3.7,-0.88){$x_6$}
\rput[bl](3.98,-2.02){$x_1$}
\rput[bl](4.6,-0.88){$x_2$}
\rput[bl](5.7,-0.88){$x_7$}
\rput[bl](5.98,-2.02){$x_3$}
\rput[bl](6.7,-0.88){$x_6$}
\rput[bl](7.7,-0.88){$x_2$}
\rput[bl](7.98,-2.02){$x_5$}
\rput[bl](8.6,-0.88){$x_6$}
\rput[bl](9.7,-0.88){$x_3$}
\rput[bl](9.98,-2.02){$x_7$}
\rput[bl](10.6,-0.88){$x_2$}
\end{scriptsize}
\end{pspicture*}
\end{center}
  \caption{A van Kampen diagram over the presentation $P_8(x_0x_{1}x_5)$ with boundary label $(x_2x_0)(x_3x_5x_7x_1)(x_2x_0)^{-1}(x_1x_3x_5x_7)^{-1}$.\label{fig:positiveflat}}
\end{figure}

For later reference (in Section~\ref{sec:nonpositivecase}) we note that the relabelling of generators $y_0=x_0, y_1=x_7^{-1}, y_2=x_2, y_3=x_1^{-1}, y_4=x_4, y_5=x_3^{-1}, y_6=x_6, y_7=x_5^{-1}$ shows that $G_8(x_0x_1x_5)\cong G_8(y_0y_4y_1^{-1})$, and so we have:

\begin{corollary}\label{cor:G8nonposnothyp}
The group $H(8,4)=G_8(x_0x_4x_1^{-1})$ is not hyperbolic.
\end{corollary}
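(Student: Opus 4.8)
The plan is to deduce the statement from Lemma~\ref{lem:n=8nonhyp} by exhibiting a group isomorphism $G_8(x_0x_1x_5)\cong G_8(x_0x_4x_1^{-1})=H(8,4)$; since hyperbolicity is an invariant of the isomorphism type of a group, $H(8,4)$ then inherits non-hyperbolicity from $G_8(x_0x_1x_5)$ and nothing further is needed.

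To build the isomorphism I would use the relabelling quoted immediately before the statement, namely the substitution $y_i=x_i$ for $i$ even and $y_i=x_{i+2}^{-1}$ for $i$ odd (subscripts mod~$8$); equivalently $x_0=y_0$, $x_1=y_3^{-1}$, $x_2=y_2$, $x_3=y_5^{-1}$, $x_4=y_4$, $x_5=y_7^{-1}$, $x_6=y_6$, $x_7=y_1^{-1}$. Since this maps a free basis of $F_8$ to a free basis it extends to an automorphism of $F_8$, so it remains to check that it sends the normal closure of $\{x_ix_{i+1}x_{i+5}:0\le i<8\}$ onto the normal closure of $\{y_jy_{j+4}y_{j+1}^{-1}:0\le j<8\}$. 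It suffices to verify, for each $i$, that the image of $x_ix_{i+1}x_{i+5}$ is a cyclic permutation either of a relator $y_jy_{j+4}y_{j+1}^{-1}$ or of its inverse, these operations leaving the normal closure unchanged. This is a finite mechanical computation: for even $i$ the image is the inverse of the $(i-1)$-st relator (e.g.\ $i=0$ gives $y_0y_3^{-1}y_7^{-1}=(y_7y_3y_0^{-1})^{-1}$), and for odd $i$ it is a cyclic shift of the $(i+1)$-st relator (e.g.\ $i=7$ gives $y_1^{-1}y_0y_4$, a cyclic permutation of $y_0y_4y_1^{-1}$).

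I do not anticipate a genuine obstacle here: all of the substantive work — producing a flat plane from the van Kampen diagram in Figure~\ref{fig:positiveflat} — was already carried out in Lemma~\ref{lem:n=8nonhyp}, and the present statement is merely a change-of-coordinates corollary. The only point requiring a little care is the bookkeeping of the subscript shift implicit in the relabelling (the odd-indexed generators are inverted \emph{and} shifted by $2$), so instead of appealing to symmetry I would tabulate all eight relators and their images explicitly before concluding via Lemma~\ref{lem:n=8nonhyp} that $H(8,4)=G_8(x_0x_4x_1^{-1})$ is not hyperbolic.
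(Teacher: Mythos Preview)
Your approach is exactly the paper's: deduce the corollary from Lemma~\ref{lem:n=8nonhyp} via the explicit relabelling of generators showing $G_8(x_0x_1x_5)\cong G_8(x_0x_4x_1^{-1})$, and your explicit substitution list and sample verifications are correct. One small inconsistency to fix: your compact formula ``$y_i=x_{i+2}^{-1}$ for $i$ odd'' does not agree with the explicit list you then give (and actually use); it should read $y_i=x_{i-2}^{-1}$ for $i$ odd, so that e.g.\ $y_3=x_1^{-1}$ rather than $y_3=x_5^{-1}$.
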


\begin{remark}\label{rk:Z+Zin(n,n/2+2)}
\em The van Kampen diagram arising in the proof of Lemma~\ref{lem:n=8nonhyp}, and later the one in the proof of Lemma~\ref{lem:H(n,n/2+2)nothyp}, provides a pair of commuting elements whose axes in the geometric realisation $\Delta$ meet at an angle $2\pi/3$. It follows that the groups considered in these results contain a free abelian subgroup of rank~2 (see, for example, \cite[page~446]{WiseApproximating}).\em
\end{remark}

In Corollaries~\ref{cor:n=21nonhyp},\ref{cor:n=24nonhyp}  we use Lemmas~\ref{lem:n=7nonhyp},\ref{lem:n=8nonhyp} respectively to prove that the groups in cases~(c),(d) of Lemma~\ref{lem:combinationsofEFG} are not hyperbolic. To do this we first recall the shift extension of a cyclically presented group. The shift automorphism $\theta$ of a cyclically presented group $G_n(w)$ results in a $\Z_n$-action on $G_n(w)$ that determines the \em shift extension \em $E_n(w) = G_n(w) \rtimes_\theta \Z_n$, which admits a two-generator two-relator presentation of the form
\[E_n(W)=\pres{x,t}{t^n,W(x,t)} \]
where $W=W(x, t)$ is obtained by rewriting $w$ in terms of the substitutions $x_i=t^ixt^{-i}$ (see, for example, \cite[Theorem~4]{JWW}). Thus there is a retraction $\nu^0: E_n(W) \rightarrow \Z_n$ given by $\nu^0(t)=t$, $\nu^0(x)=t^0=1$ with kernel $G_n(w)$. Moreover, as shown in~\cite[Section~2]{BogleyShift}, for certain values of $f$ ($0\leq f<n$) there may be further retractions $\nu^f$. Specifically, by~\cite[Theorem~2.3]{BogleyShift} the kernel of a retraction $\nu^f:E_n(W) \rightarrow \Z_n$ given by $\nu^f(t)=t$, $\nu^f(x)=t^f$ is cyclically presented, generated by the elements $y_i=t^ixt^{-(i+f)}$ ($0\leq i<n$). Since (non-elementary) hyperbolicity is preserved under taking finite index subgroups and finite extensions, the group $E_n(W)$ is (non-elementary) hyperbolic if and only if the kernel of any, and hence all, of its retractions $\nu^f$ is (non-elementary) hyperbolic.

In the case $w=x_0x_kx_{l}$ we have
\begin{alignat*}{1}
E_n(W)=G_n(w) \rtimes_\theta \pres{t}{t^n}=\pres{x,t}{t^n, xt^{k}xt^{l-k}xt^{-l}}\label{eq:positiveextension}
\end{alignat*}
which admits a retraction $\nu^f:E_n\rightarrow \pres{t}{t^n}$ given by $\nu^f(t)=t, \nu^f(x)=t^f$ if and only if $3f\equiv 0$~mod~$n$; the kernel of such a retraction is the cyclically presented group $G_n(x_0x_{f+k}x_{2f+l})$ (see~\cite[page~158]{BogleyShift}).

\begin{corollary}\label{cor:n=21nonhyp}
The group $G_{21}(x_0x_1x_5)$ is not hyperbolic.
\end{corollary}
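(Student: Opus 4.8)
The plan is to deduce non-hyperbolicity of $G_{21}(x_0x_1x_5)$ from the non-hyperbolicity of $G_7(x_0x_1x_5)$ (Lemma~\ref{lem:n=7nonhyp}) by exhibiting $G_7(x_0x_1x_5)$ as the kernel of a retraction of the shift extension $E_{21}(W)$, where $W=xt\,xt^{4}\,xt^{-5}$. Recall from the discussion preceding the corollary that $E_n(W)=\pres{x,t}{t^n,\,xt^kxt^{l-k}xt^{-l}}$ with $(n,k,l)=(21,1,5)$ admits a retraction $\nu^f:E_{21}(W)\to\pres{t}{t^{21}}$ precisely when $3f\equiv 0$~mod~$21$, i.e.\ when $f\in\{0,7,14\}$, and that the kernel of $\nu^f$ is the cyclically presented group $G_{21}(x_0x_{f+k}x_{2f+l})=G_{21}(x_0x_{f+1}x_{2f+5})$. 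Since non-elementary hyperbolicity is preserved under passage to finite-index subgroups and finite extensions, $E_{21}(W)$ is hyperbolic if and only if the kernel of \emph{any} (equivalently, all) of its retractions $\nu^f$ is hyperbolic, and in particular $G_{21}(x_0x_1x_5)$ is hyperbolic if and only if $E_{21}(W)$ is.

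Next I would pick the retraction that makes the kernel transparently a free product involving $G_7(x_0x_1x_5)$. Taking $f=14$ gives the kernel $G_{21}(x_0x_{15}x_{33})=G_{21}(x_0x_{15}x_{12})$; taking $f=7$ gives $G_{21}(x_0x_8x_{19})$. In either case the three subscript-differences of the defining word $x_0x_ax_b$ are $a,\,b-a,\,-b$, and one computes $(21,a,b)$ for the chosen $f$: the point is to arrange $d=(21,a,b)=3$, so that by \cite[Lemma~2.4]{CRS} the kernel decomposes as a free product of $3$ copies of $G_{7}(x_0x_{a/3}x_{b/3})$, and by \cite[Lemma~2.1]{EdjvetWilliams} each factor is isomorphic to $G_7(x_0x_1x_5)$. (Concretely, for $f=7$ we get $G_{21}(x_0x_8x_{19})$ with $(21,8,19)=1$, which is not what we want, so instead use $f=14$, giving $G_{21}(x_0x_{15}x_{12})$ with $(21,15,12)=3$, hence a free product of three copies of $G_7(x_0x_5x_4)\cong G_7(x_0x_1x_5)$.) Since a free product $H*K$ is hyperbolic iff both factors are (\cite[Theorem~H]{BaumslagGerstenShapiroShort}), and $G_7(x_0x_1x_5)$ is not hyperbolic, the kernel is not hyperbolic, hence $E_{21}(W)$ is not hyperbolic, hence $G_{21}(x_0x_1x_5)$ is not hyperbolic.

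The only real bookkeeping is verifying the coprimality $(21,a,b)=3$ for the chosen value of $f$ and checking, via \cite[Lemma~2.1]{EdjvetWilliams}, that the resulting $G_7$-factor is indeed (isomorphic to) $G_7(x_0x_1x_5)$ rather than some other $G_7(x_0x_kx_l)$ — but for $n=7$ Lemma~\ref{lem:6cycle} and the remark before it already pin down that every T(6) presentation $P_7(x_0x_kx_l)$ gives $G_7(x_0x_1x_5)$, so once we know the factor is a T(6) cyclic presentation over $7$ generators the identification is automatic. I expect the main (mild) obstacle to be simply choosing $f\in\{0,7,14\}$ correctly so that the $\gcd$ is $3$ and not $1$ or $21$; everything else is a direct appeal to the cited free-product decomposition, the cited isomorphism lemma, and the stated invariance of hyperbolicity under commensurability.
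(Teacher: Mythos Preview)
Your argument is correct and uses essentially the same mechanism as the paper: both exploit that two different retractions of a common shift extension have kernels $G_{21}(x_0x_1x_5)$ (up to isomorphism) and a free product of three copies of $G_7(x_0x_1x_5)$, so commensurability transfers non-hyperbolicity from the latter to the former. The only cosmetic difference is direction: the paper starts from the free product $G_{21}(x_0x_3x_{15})$, forms its shift extension, and locates $G_{21}(x_0x_1x_5)$ as the kernel of $\nu^7$, whereas you start from the shift extension of $G_{21}(x_0x_1x_5)$ and locate the free product as the kernel of $\nu^{14}$; your computation $(21,15,12)=3$ and the identification $G_7(x_0x_5x_4)\cong G_7(x_0x_1x_5)$ are both correct.
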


\begin{proof}
The free product of three copies of $G_7(x_0x_1x_5)$ is the cyclically presented group $G_{21}(x_0x_3x_{15})$  with shift extension $E=\pres{x,t}{t^{21}, xt^3xt^{12}xt^{-15}}$. The kernel of the retraction $\nu^7:E\rightarrow \Z_n=\pres{t}{t^{21}}$ given by $\nu^7(t)=t$, $\nu^7(x)=t^7$ is the group $G_{21}(x_0x_{10}x_8)$ which, by~\cite[Lemma~2.1(iv),(v)]{EdjvetWilliams}, is isomorphic to $G_{21}(x_0x_1x_5)$. Since $G_7(x_0x_1x_5)$ is not hyperbolic, neither is $G_{21}(x_0x_3x_{15})$, nor $E$, and hence, nor is $G_{21}(x_0x_1x_5)$.
\end{proof}

\begin{corollary}\label{cor:n=24nonhyp}
The group $G_{24}(x_0x_1x_5)$ is not hyperbolic.
\end{corollary}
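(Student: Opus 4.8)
The plan is to mimic the proof of Corollary~\ref{cor:n=21nonhyp}, with the prime~$7$ replaced by~$8$ and with Lemma~\ref{lem:n=8nonhyp} used in place of Lemma~\ref{lem:n=7nonhyp}. Since $(24,3,15)=3$, the free product of three copies of $G_8(x_0x_1x_5)$ is the cyclically presented group $G_{24}(x_0x_3x_{15})$ by \cite[Lemma~2.4]{CRS}, and its shift extension is $E=\pres{x,t}{t^{24},\,xt^3xt^{12}xt^{-15}}$. Since $3\cdot 8\equiv 0$~mod~$24$, the map $\nu^8:E\to\Z_{24}=\pres{t}{t^{24}}$ given by $\nu^8(t)=t$, $\nu^8(x)=t^8$ is a retraction, whose kernel is the cyclically presented group $G_{24}(x_0x_{f+k}x_{2f+l})=G_{24}(x_0x_{11}x_{31})=G_{24}(x_0x_{11}x_7)$ (with $f=8$, $k=3$, $l=15$, and $31\equiv 7$~mod~$24$); by \cite[Lemma~2.1]{EdjvetWilliams} this kernel is isomorphic to $G_{24}(x_0x_1x_5)$. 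Since $G_8(x_0x_1x_5)$ is not hyperbolic by Lemma~\ref{lem:n=8nonhyp}, neither is $G_{24}(x_0x_3x_{15})$, nor --- as recalled above, $E$ is hyperbolic if and only if the kernel of any of its retractions is --- $E$, and hence nor is $G_{24}(x_0x_1x_5)$.

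The individual computations here are routine, so I do not expect a genuine obstacle: the argument is structurally identical to that of Corollary~\ref{cor:n=21nonhyp}. The one place that warrants a check is the appeal to \cite[Lemma~2.1]{EdjvetWilliams} for the isomorphism $G_{24}(x_0x_{11}x_7)\cong G_{24}(x_0x_1x_5)$; as with the pair $(10,8)$ in the $n=21$ case, both of the parameter pairs $(k,l)=(11,7)$ and $(k,l)=(1,5)$ satisfy $l\equiv 5k$~mod~$24$, and $(11,7)$ is carried to $(1,5)$ on multiplying the parameters through by the unit $11$ (which is its own inverse mod~$24$). If preferred, one may instead use the retraction $\nu^{16}$, whose kernel $G_{24}(x_0x_{19}x_{23})$ is isomorphic to $G_{24}(x_0x_1x_5)$ in the same way.
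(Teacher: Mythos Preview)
Your proof is correct and follows essentially the same route as the paper's own proof: both pass to the shift extension $E$ of $G_{24}(x_0x_3x_{15})\cong G_8(x_0x_1x_5)^{*3}$, take the retraction $\nu^8$ with kernel $G_{24}(x_0x_{11}x_7)\cong G_{24}(x_0x_1x_5)$, and transfer non-hyperbolicity from Lemma~\ref{lem:n=8nonhyp}. Your explicit verification of the isomorphism via multiplication by the unit~$11$ is a nice touch, matching the paper's citation of \cite[Lemma~2.1(v),(ii)]{EdjvetWilliams}.
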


\begin{proof}
The free product of three copies of $G_8(x_0x_1x_5)$ is the cyclically presented group $G_{24}(x_0x_3x_{15})$ with shift extension $E=\pres{x,t}{t^{24}, xt^3xt^{12}xt^{-15}}$. The kernel of the retraction $\nu^8:E\rightarrow \Z_{24}=\pres{t}{t^{24}}$ given by $\nu^8(t)=t$, $\nu^8(x)=t^8$ is the group $G_{24}(x_0x_{11}x_7)$ which, by~\cite[Lemma~2.1(v),(ii)]{EdjvetWilliams}, is isomorphic to $G_{24}(x_0x_1x_5)$. Since $G_8(x_0x_1x_5)$ is not hyperbolic, neither is $G_{24}(x_0x_3x_{15})$, nor $E$, and hence, nor is $G_{24}(x_0x_1x_5)$.
\end{proof}

\subsection{Analysis of van Kampen diagrams over $P_n(x_0x_kx_l)$}\label{sec:curvaturepositive}

In this section we show that if the cyclic presentation $P=P_n(x_0x_kx_l)$ is $T(6)$ and at most one of the congruences $(E.j),(F1.j)$, $(F2.j)$ hold then $G=G_n(x_0x_kx_l)$ is hyperbolic. Following the method of proof of~\cite[Theorem~13]{HowieWilliams} we show that $G$ has a linear isoperimetric function~\cite[Theorem~3.1]{GerstenIsoperimetric}. That is, we show that there is a linear function $f:\mathbb{N}\rightarrow \mathbb{N}$ such that for all $N\in \mathbb{N}$ and all freely reduced words $W\in F_n$ with length at most $N$ that represent the identity of $G$ we have $\mathrm{Area}(W)\leq f(N)$, where $\mathrm{Area} (W)$ denotes the minimum number of faces in a reduced van Kampen diagram over $P$ with boundary label $W$. Without loss of generality we may assume that the boundary of such a van Kampen diagram $D$ is a simple closed curve. Note that each face in $D$ is a triangle, as shown in Figure~\ref{fig:positiveface}, where the corner labels $X,Y,Z$ correspond to the edge types of the star graph of $P$. We say that a vertex of $D$ is a \em boundary vertex \em if it lies on $\partial D$, and is an \em interior vertex \em otherwise. In order to obtain a linear isoperimetric function (in Lemma~\ref{lem:curvaturepositivecase}) we first carefully analyse degrees of vertices within $D$.

\begin{figure}
\begin{center}

\psset{xunit=1cm,yunit=1cm,algebraic=true,dimen=middle,dotstyle=o,dotsize=7pt 0,linewidth=1.6pt,arrowsize=3pt 2,arrowinset=0.25}
\begin{pspicture*}(9.5,13.5)(15,17)
\psline[linewidth=2pt,ArrowInside=->,ArrowInsidePos=0.5](12,16)(10,14)
\psline[linewidth=2pt,ArrowInside=->,ArrowInsidePos=0.5](10,14)(14,14)
\psline[linewidth=2pt,ArrowInside=->,ArrowInsidePos=0.5](14,14)(12,16)
\rput[tl](11.8,15.7){$X$}
\rput[tl](10.6,14.4){$Y$}
\rput[tl](13.2,14.4){$Z$}
\rput[tl](10.2,15.4){$x_{i+k}$}
\rput[tl](13.0,15.4){$x_i$}
\rput[tl](11.5,13.8){$x_{i+l}$}
\begin{scriptsize}
\psdots[dotstyle=*,linecolor=black](12,16)
\psdots[dotstyle=*,linecolor=black](10,14)
\psdots[dotstyle=*,linecolor=black](14,14)
\end{scriptsize}
\end{pspicture*}
\end{center}
  \caption{A typical face in a van Kampen diagram over the presentation $P_n(x_0x_kx_l)$.\label{fig:positiveface}}
\end{figure}
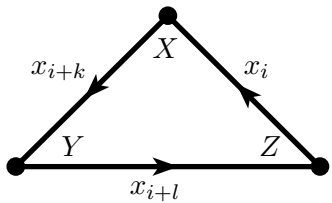

\begin{lemma}\label{lem:twolabelled(XYZ)^2}
Let $\Delta$ be an interior face of $D$ in which two of the vertices have label $(XYZ)^2$. Then the label of the third vertex contains a subword of the form $aba$, where $b$ is the label of the corner of $\Delta$ at this vertex, and $a,b \in \{X,Y,Z\}$, $a\neq b$.
\end{lemma}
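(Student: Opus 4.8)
The plan is to determine the local structure of $D$ around $\Delta$ using only two ingredients: the link of an interior vertex is a closed path in the star graph $\Gamma$, and the cyclic word $(XYZ)^2$ is rigid. Throughout I use the dictionary of Figure~\ref{fig:positiveface}: at a corner of a face one records the pair (label of the incoming edge, label of the outgoing edge), and this pair determines the corner type $X$, $Y$ or $Z$ via the relator. Since $\Delta$ is an interior face, the three faces of $D$ sharing an edge with it exist; write $F_{ij}$ for the one meeting $\Delta$ along the edge joining the vertices $v_i$ and $v_j$ of $\Delta$. Applying the symmetry $X\mapsto Y\mapsto Z\mapsto X$ (which preserves the $T(6)$ hypothesis, fixes the label $(XYZ)^2$, and cyclically permutes corner types) I may assume that the corner of $\Delta$ at the vertex whose label must be controlled is $Z$; call that vertex $v_3$, so the two vertices with label $(XYZ)^2$ are $v_1$, with $\Delta$-corner $X$, and $v_2$, with $\Delta$-corner $Y$, and we must show the label of $v_3$ contains $XZX$ or $YZY$. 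I treat the case where $\Delta$ is positively oriented, say with boundary $v_3\to v_1\to v_2\to v_3$ labelled $x_c,x_{c+k},x_{c+l}$; the negatively oriented case is analogous.

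First I analyse the three neighbours $F_{12},F_{13},F_{23}$. A positively oriented face traverses each of its boundary edges in agreement with that edge's label, so each $F_{ij}$ traverses the shared edge against its label and is therefore a negatively oriented relator. The inverse of the shared edge's label then occurs in one of three positions of the boundary word of $F_{ij}$; and, using that the $T(6)$ hypothesis forbids all the congruences $(B.j)$, one checks that these three possibilities give three distinct corner types at each of the two vertices $F_{ij}$ shares with $\Delta$. Hence, for each neighbour, the corner type it contributes at one of those two vertices determines which relator it is, and so the corner type it contributes at the other. This step is a bookkeeping computation with the incoming/outgoing-edge dictionary, and I expect it to be the main source of difficulty: one has to keep careful track of the fact that two faces glued along an edge traverse that edge in opposite senses.

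The rigidity of $(XYZ)^2$ now forces everything. In the label of $v_1$, the corner $X$ of $\Delta$ is flanked by a $Y$ and a $Z$; since $F_{13}$ lies across the incoming edge of $\Delta$ at $v_1$ and $F_{12}$ across the outgoing edge, their corners at $v_1$ are $Y$ and $Z$ in one of the two orders, the order being fixed by the direction in which the label at $v_1$ is read. This is exactly the dichotomy of Figure~\ref{fig:ICFig2}. The shared face $F_{12}$ propagates the choice: its corner at $v_1$ determines its corner at $v_2$ (by the previous step), which by the rigidity of $(XYZ)^2$ at $v_2$ fixes the reading direction of the label at $v_2$, hence the corner of $F_{23}$ at $v_2$, hence the corner of $F_{23}$ at $v_3$. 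At the same time the corner of $F_{13}$ at $v_1$ determines the corner of $F_{13}$ at $v_3$. Following this through, in one branch both $F_{13}$ and $F_{23}$ contribute the corner $Y$ at $v_3$, and in the other both contribute $X$.

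Finally, $F_{13}$, $\Delta$, $F_{23}$ occur consecutively around $v_3$ with corners there equal to $a$, $Z$, $a$, for some $a\in\{X,Y\}$. Thus the label of $v_3$ contains the subword $aZa$, and since $b:=Z$ is the corner of $\Delta$ at $v_3$ this is the assertion. The reason the argument closes up — equivalently, why Figure~\ref{fig:ICFig2} records two configurations and not four — is that the single shared face $F_{12}$ couples the reading directions of the labels at $v_1$ and $v_2$, forcing $F_{13}$ and $F_{23}$ to contribute the same corner type at $v_3$.
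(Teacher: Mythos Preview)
Your argument is correct and is essentially the paper's own proof, spelled out in words where the paper simply appeals to Figure~\ref{fig:ICFig2}: reduce by the cyclic symmetry to the case where $\Delta$ has corners $X,Y,Z$ at $v_1,v_2,v_3$, use the rigidity of $(XYZ)^2$ at $v_1$ and $v_2$ to pin down the corners of the neighbouring faces, and read off that the two faces flanking $\Delta$ at $v_3$ both contribute the same letter $a\in\{X,Y\}$, yielding $aZa$. One small remark: your appeal to the failure of the congruences $(B.j)$ in the step ``three positions give three distinct corner types'' is not needed --- the three corners of any face are automatically $X,Y,Z$, so knowing the corner of $F_{ij}$ at one endpoint of the shared edge determines it at the other regardless of~$T(6)$.
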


\begin{proof}
Without loss of generality we may assume that the edges of $\Delta$ are oriented in an anticlockwise manner. We name its vertices $v_1,v_2,v_3$, read in an anticlockwise manner and suppose $v_1,v_2$ are labelled $(XYZ)^2$. If the corner of $\Delta$ at $v_1$ has label $X$ (resp. $Y$, resp. $Z$) then the corner of $\Delta$ at $v_2$ has label $Y$ (resp. $Z$, resp. $X$), in which case the label of $v_3$ has a subword $YZY$ or $XZX$ (resp. $ZXZ$ or $YXY$, resp. $XYX$ or $ZYZ$), as shown in Figure~\ref{fig:ICFig2}.
\end{proof}

\begin{figure}

\begin{center}

\begin{tabular}{c@{}c}
\psset{xunit=1cm,yunit=1cm,algebraic=true,dimen=middle,dotstyle=o,dotsize=7pt 0,linewidth=1.6pt,arrowsize=3pt 2,arrowinset=0.25}
\begin{pspicture*}(9,13)(15,17)
\psline[linewidth=2pt,ArrowInside=->,ArrowInsidePos=0.5](12,16)(10,14)
\psline[linewidth=2pt,ArrowInside=->,ArrowInsidePos=0.5](10,14)(14,14)
\psline[linewidth=2pt,ArrowInside=->,ArrowInsidePos=0.5](14,14)(12,16)
\rput[tl](11.82,15.7){$X$}
\rput[tl](11.2,15.8){$Y$}
\rput[tl](12.6,15.8){$Z$}
\rput[tl](9.9,14.6){$Z$}
\rput[tl](10.5,13.8){$X$}
\rput[tl](10.5,14.4){$Y$}
\rput[tl](13.4,13.8){$Y$}
\rput[tl](13.2,14.4){$Z$}
\rput[tl](13.8,14.6){$Y$}
\psline[linewidth=2pt](9.74,14.47)(10.22,13.59)
\psline[linewidth=2pt](14.34,14.47)(13.7,13.51)
\psline[linewidth=2pt](11.42,16.01)(12.56,15.99)
\rput[tl](11.78,16.4){$v_1$}
\rput[tl](9.4,13.9){$v_2$}
\rput[tl](14.18,13.9){$v_3$}
\begin{scriptsize}
\psdots[dotstyle=*,linecolor=black](12,16)
\psdots[dotstyle=*,linecolor=black](10,14)
\psdots[dotstyle=*,linecolor=black](14,14)
\end{scriptsize}
\end{pspicture*}

&

\psset{xunit=1cm,yunit=1cm,algebraic=true,dimen=middle,dotstyle=o,dotsize=7pt 0,linewidth=1.6pt,arrowsize=3pt 2,arrowinset=0.25}
\begin{pspicture*}(9,13)(15,17)
\psline[linewidth=2pt,ArrowInside=->,ArrowInsidePos=0.5](12,16)(10,14)
\psline[linewidth=2pt,ArrowInside=->,ArrowInsidePos=0.5](10,14)(14,14)
\psline[linewidth=2pt,ArrowInside=->,ArrowInsidePos=0.5](14,14)(12,16)

\rput[tl](11.82,15.7){$X$}
\rput[tl](11.2,15.8){$Z$}
\rput[tl](12.6,15.8){$Y$}
\rput[tl](10.5,14.4){$Y$}
\rput[tl](9.9,14.6){$X$}
\rput[tl](10.5,13.8){$Z$}
\rput[tl](13.4,13.8){$X$}
\rput[tl](13.8,14.6){$X$}
\rput[tl](13.2,14.4){$Z$}
\psline[linewidth=2pt](9.74,14.47)(10.22,13.59)
\psline[linewidth=2pt](14.34,14.47)(13.7,13.51)
\psline[linewidth=2pt](11.42,16.01)(12.56,15.99)
\rput[tl](11.78,16.4){$v_1$}
\rput[tl](9.4,13.9){$v_2$}
\rput[tl](14.18,13.9){$v_3$}
\begin{scriptsize}
\psdots[dotstyle=*,linecolor=black](12,16)
\psdots[dotstyle=*,linecolor=black](10,14)
\psdots[dotstyle=*,linecolor=black](14,14)
\end{scriptsize}
\end{pspicture*}

\end{tabular}
\end{center}
  \caption{Possible configurations when two vertices have label $(XYZ)^2$.\label{fig:ICFig2}}
\end{figure}
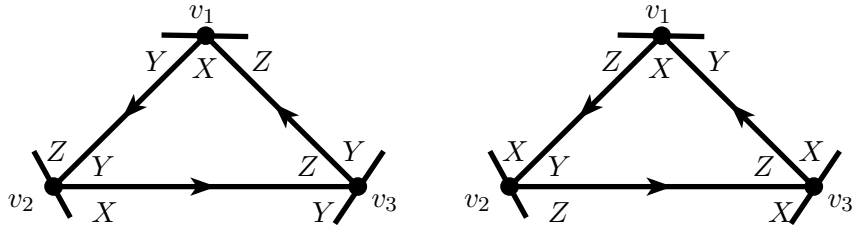

\begin{figure}
\begin{center}

\psset{xunit=1cm,yunit=1cm,algebraic=true,dimen=middle,dotstyle=o,dotsize=7pt 0,linewidth=1.6pt,arrowsize=3pt 2,arrowinset=0.25}
\begin{pspicture*}(4,7.8)(11,13)

\rput[tl](6.7,10.4){$X$}
\rput[tl](7.42,10.6){$Y$}
\rput[tl](7.9,10.4){$Z$}
\rput[tl](6.8,9.8){$Z$}
\rput[tl](7.9,9.8){$X$}
\rput[tl](7.42,9.6){$Y$}

\rput[tl](9.4,10.4){$Y$}
\rput[tl](9.4,9.8){$Z$}

\rput[tl](5.3,10.4){$Y$}
\rput[tl](5.3,9.8){$X$}

\rput[tl](8.3,8.4){$Z$}
\rput[tl](8.8,8.6){$Y$}

\rput[tl](6.4,11.9){$X$}
\rput[tl](5.9,11.6){$Z$}

\rput[tl](5.94,8.5){$Y$}
\rput[tl](6.4,8.4){$X$}

\rput[tl](8.8,11.6){$X$}
\rput[tl](8.4,11.85){$Z$}

\psline[linewidth=2pt,ArrowInside=->,ArrowInsidePos=0.5](5,10)(6,8)
\psline[linewidth=2pt,ArrowInside=->,ArrowInsidePos=0.5](9,8)(6,8)
\psline[linewidth=2pt,ArrowInside=->,ArrowInsidePos=0.5](9,8)(10,10)
\psline[linewidth=2pt,ArrowInside=->,ArrowInsidePos=0.5](9,12)(10,10)
\psline[linewidth=2pt,ArrowInside=->,ArrowInsidePos=0.5](9,12)(6,12)
\psline[linewidth=2pt,ArrowInside=->,ArrowInsidePos=0.5](5,10)(6,12)
\psline[linewidth=2pt,ArrowInside=->,ArrowInsidePos=0.5](6,12)(7.5,10)
\psline[linewidth=2pt,ArrowInside=->,ArrowInsidePos=0.5](10,10)(7.5,10)
\psline[linewidth=2pt,ArrowInside=->,ArrowInsidePos=0.5](6,8)(7.5,10)
\psline[linewidth=2pt,ArrowInside=->,ArrowInsidePos=0.5](7.5,10)(5,10)
\psline[linewidth=2pt,ArrowInside=->,ArrowInsidePos=0.5](7.5,10)(9,12)
\psline[linewidth=2pt,ArrowInside=->,ArrowInsidePos=0.5](7.5,10)(9,8)

\begin{scriptsize}
\psdots[dotstyle=*,linecolor=black](6,12)
\psdots[dotstyle=*,linecolor=black](5,10)
\psdots[dotstyle=*,linecolor=black](10,10)
\psdots[dotstyle=*,linecolor=black](9,8)
\psdots[dotstyle=*,linecolor=black](6,8)
\psdots[dotstyle=*,linecolor=black](7.5,10)
\psdots[dotstyle=*,linecolor=black](9,12)
\end{scriptsize}
\end{pspicture*}
\end{center}
  \caption{Neighbourhood of an interior vertex labelled $(XYZ)^2$.\label{fig:ICFig3}}
\end{figure}
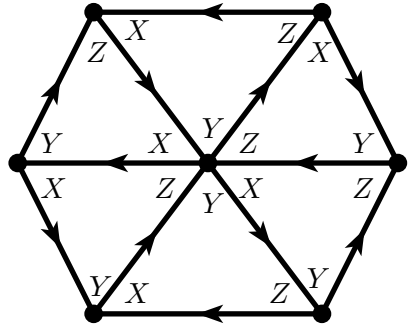

\begin{lemma}\label{lem:INT(XYZ)^2twoneighbours}
If an interior vertex $v$ of $D$ of degree 6 has label $(XYZ)^2$ then two adjacent neighbours of $v$ have $XY$ as a cyclic subword of their labels, two adjacent neighbours have $XZ$ as a cyclic subword of their labels, and two adjacent neighbours have $YZ$ as a cyclic subword of their labels.
\end{lemma}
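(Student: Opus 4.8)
The plan is to determine the neighbourhood of $v$ completely and then simply read off the conclusion. The cyclic sequence of corner labels read around $v$ is a closed walk of type $XYZXYZ$ in the star graph $\Gamma$, and such a walk is rigid: since every vertex of $\Gamma$ lies on exactly one edge of each of the types $X$, $Y$, $Z$, each successive vertex is forced once a starting vertex is chosen. As the vertices of the walk alternate between $\mathcal{X}$ and $\mathcal{X}^{-1}$ we may start it at a vertex of $\mathcal{X}$, and then, after a cyclic rotation of the label of $v$ and an application of a power of the shift to the generators (operations affecting neither the hypotheses nor the conclusion), we may assume that the corners around $v$ trace exactly the cycle
\[
x_i-x_{i+k}^{-1}-x_{i+2k-l}-x_{i+2k-2l}^{-1}-x_{i+k-2l}-x_{i-l}^{-1}-x_i
\]
of Observation~\ref{observation:XYZXYZcycle}. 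In particular the six edges of $D$ incident to $v$ carry the labels $x_i,x_{i+k},x_{i+2k-l},x_{i+2k-2l},x_{i+k-2l},x_{i-l}$ in cyclic order, and they point alternately towards $v$ and away from $v$.

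The next step is to determine each of the six faces at $v$. For a face $\Delta$ incident to $v$, the corner of $\Delta$ at $v$ lies between two consecutive edges of the list above, and by the shape of a face over $P_n(x_0x_kx_l)$ (Figure~\ref{fig:positiveface}) the labels of those two edges together with their orientations at $v$ determine the relator $\theta^a(x_0x_kx_l)$ carried by $\Delta$; the same picture then gives the type of the corner of $\Delta$ at each of its other two vertices from the label and orientation of the edge joining that vertex to $v$. Carrying this out for all six faces — a routine though fiddly calculation — shows that, going around $v$, the two corner types contributed to the six neighbours $u_1,\dots,u_6$ by their two incident faces at $v$ are $\{Y,Z\},\{Y,Z\},\{X,Y\},\{X,Y\},\{X,Z\},\{X,Z\}$, the consecutive pairs being the two non-$v$ vertices of the three faces whose corner at $v$ is $X$, $Z$, $Y$ respectively. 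This is the local configuration depicted in Figure~\ref{fig:ICFig3}.

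Since two neighbours of $v$ are adjacent precisely when they are the two non-$v$ vertices of one of the six faces at $v$, the pattern just obtained yields two adjacent neighbours whose labels both contain $YZ$ as a cyclic subword, and likewise adjacent pairs for $XY$ and for $XZ$, which is the assertion. The main obstacle is the bookkeeping in the middle step: one must keep the edge orientations straight and apply Figure~\ref{fig:positiveface} consistently at both ends of every edge incident to $v$. The fact, used repeatedly in this section, that a vertex label contains no subword $XX$, $YY$ or $ZZ$ gives a convenient check, as it rules out the incorrect choice of corner type at each $u_i$.
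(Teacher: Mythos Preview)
Your proof is correct and follows essentially the same approach as the paper: determine the full neighbourhood of $v$ (which is exactly Figure~\ref{fig:ICFig3}) and read off the conclusion. The paper's proof simply points to that figure, whereas you spell out how the figure is forced via the star-graph walk of Observation~\ref{observation:XYZXYZcycle} and the face picture of Figure~\ref{fig:positiveface}. One small point: the paper explicitly notes that the label $(XYZ)^2$ can be read in either orientation around $v$, and that the anticlockwise case is handled by a mirror-image figure; your reduction by ``cyclic rotation of the label'' does not cover this reflection, so you should add a sentence observing that the other orientation is treated symmetrically.
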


\begin{proof}
If the label of $v$ is $(XYZ)^2$ oriented clockwise, then the neighbourhood of $v$ is as given in Figure~\ref{fig:ICFig3}, from which the conclusion can be observed. A similar figure deals with the case when the label of $v$ is $(XYZ)^2$ oriented anticlockwise.
\end{proof}

\begin{figure}[ht]
\begin{center}

\psset{xunit=1cm,yunit=1cm,algebraic=true,dimen=middle,dotstyle=o,dotsize=7pt 0,linewidth=1.6pt,arrowsize=3pt 2,arrowinset=0.25}
\begin{pspicture*}(2,6.0)(13,14.8)
%
\rput[tl](11.1,8.5){$u_1$}
\rput[tl](10.4,8.6){$Z$}
\rput[tl](7.4,6.3){$u_2$}
\rput[tl](7.4,7.2){$Z$}
\rput[tl](3.3,8.5){$u_3$}
\rput[tl](4.3,8.6){$Z$}
\rput[tl](3.3,12.0){$u_4$}
\rput[tl](4.3,11.7){$Z$}
\rput[tl](7.3,13.9){$u_5$}
\rput[tl](7.3,13.1){$Z$}
\rput[tl](11.3,12.0){$u_6$}
\rput[tl](10.3,11.7){$Z$}
%
\rput[tl](6.8,10.4){$X$}
\rput[tl](7.4,10.5){$Y$}
\rput[tl](7.9,10.4){$X$}
\rput[tl](6.8,9.9){$Y$}
\rput[tl](7.3,9.7){$X$}
\rput[tl](7.9,9.9){$Y$}

\rput[tl](9.3,9.8){$X$}
\rput[tl](9.9,10.6){$Y$}
\rput[tl](9.9,9.5){$Y$}
\rput[tl](9.4,10.4){$Z$}

\rput[tl](8.3,8.4){$Y$}
\rput[tl](8.2,7.9){$X$}
\rput[tl](9.3,8.5){$X$}
\rput[tl](8.8,8.7){$Z$}

\rput[tl](8.8,11.6){$Y$}
\rput[tl](8.1,12.4){$X$}
\rput[tl](9.3,11.8){$X$}
\rput[tl](8.4,11.9){$Z$}
\rput[tl](6.4,11.9){$X$}
\rput[tl](5.4,11.8){$Y$}
\rput[tl](5.86,11.7){$Z$}
\rput[tl](6.6,12.4){$Y$}
\rput[tl](6.4,8.4){$Z$}
\rput[tl](5.86,8.8){$X$}
\rput[tl](5.4,8.5){$Y$}
\rput[tl](6.6,7.9){$Y$}


\rput[tl](4.84,10.7){$X$}
\rput[tl](4.78,9.6){$X$}
\rput[tl](5.26,9.9){$Z$}
\rput[tl](5.28,10.4){$Y$}

\psline[linewidth=2pt,ArrowInside=->,ArrowInsidePos](7.5,6.65)(9,8)
\psline[linewidth=2pt,ArrowInside=->,ArrowInsidePos](6,8)(7.5,6.65)
\psline[linewidth=2pt,ArrowInside=->,ArrowInsidePos](9,8)(6,8)
\psline[linewidth=2pt,ArrowInside=->,ArrowInsidePos](6,8)(7.5,10)
\psline[linewidth=2pt,ArrowInside=->,ArrowInsidePos](7.5,10)(9,8)
\psline[linewidth=2pt,ArrowInside=->,ArrowInsidePos](7.5,10)(9,12)
\psline[linewidth=2pt,ArrowInside=->,ArrowInsidePos](7.5,10)(5,10)
\psline[linewidth=2pt,ArrowInside=->,ArrowInsidePos](10,10)(7.5,10)
\psline[linewidth=2pt,ArrowInside=->,ArrowInsidePos](6,12)(7.5,10)
\psline[linewidth=2pt,ArrowInside=->,ArrowInsidePos](6,8)(7.5,10)
\psline[linewidth=2pt,ArrowInside=->,ArrowInsidePos](5,10)(6,8)
\psline[linewidth=2pt,ArrowInside=->,ArrowInsidePos](5,10)(6,12)
\psline[linewidth=2pt,ArrowInside=->,ArrowInsidePos](9,12)(10,10)
\psline[linewidth=2pt,ArrowInside=->,ArrowInsidePos](10,10)(11,8.25)
\psline[linewidth=2pt,ArrowInside=->,ArrowInsidePos](6,8)(4,8.25)
\psline[linewidth=2pt,ArrowInside=->,ArrowInsidePos](4,8.25)(5,10)
\psline[linewidth=2pt,ArrowInside=->,ArrowInsidePos](5,10)(6,8)
\psline[linewidth=2pt,ArrowInside=->,ArrowInsidePos](4,11.75)(5,10)
\psline[linewidth=2pt,ArrowInside=->,ArrowInsidePos](5,10)(6,12)
\psline[linewidth=2pt,ArrowInside=->,ArrowInsidePos](6,12)(4,11.75)
\psline[linewidth=2pt,ArrowInside=->,ArrowInsidePos](6,12)(7.5,13.35)
\psline[linewidth=2pt,ArrowInside=->,ArrowInsidePos](7.5,13.35)(9,12)
\psline[linewidth=2pt,ArrowInside=->,ArrowInsidePos](9,12)(6,12)
\psline[linewidth=2pt,ArrowInside=->,ArrowInsidePos](11,11.75)(9,12)
\psline[linewidth=2pt,ArrowInside=->,ArrowInsidePos](10,10)(11,11.75)
\psline[linewidth=2pt,ArrowInside=->,ArrowInsidePos](9,8)(10,10)
\psline[linewidth=2pt,ArrowInside=->,ArrowInsidePos](11,8.25)(9,8)

\begin{scriptsize}
\psdots[dotstyle=*,linecolor=black](5,10)
\psdots[dotstyle=*,linecolor=black](7.5,10)
\psdots[dotstyle=*,linecolor=black](10,10)

\psdots[dotstyle=*,linecolor=black](6,8)
\psdots[dotstyle=*,linecolor=black](9,8)

\psdots[dotstyle=*,linecolor=black](6,12)
\psdots[dotstyle=*,linecolor=black](9,12)

\psdots[dotstyle=*,linecolor=black](4,11.75)
\psdots[dotstyle=*,linecolor=black](11,11.75)

\psdots[dotstyle=*,linecolor=black](7.5,13.35)
\psdots[dotstyle=*,linecolor=black](7.5,6.65)

\psdots[dotstyle=*,linecolor=black](4,8.25)
\psdots[dotstyle=*,linecolor=black](11,8.25)

\end{scriptsize}
\end{pspicture*}

\end{center}
  \caption{Neighbourhood of an interior vertex labelled $(E.0)$ and no boundary neighbours.\label{fig:ICFig4}}
\end{figure}
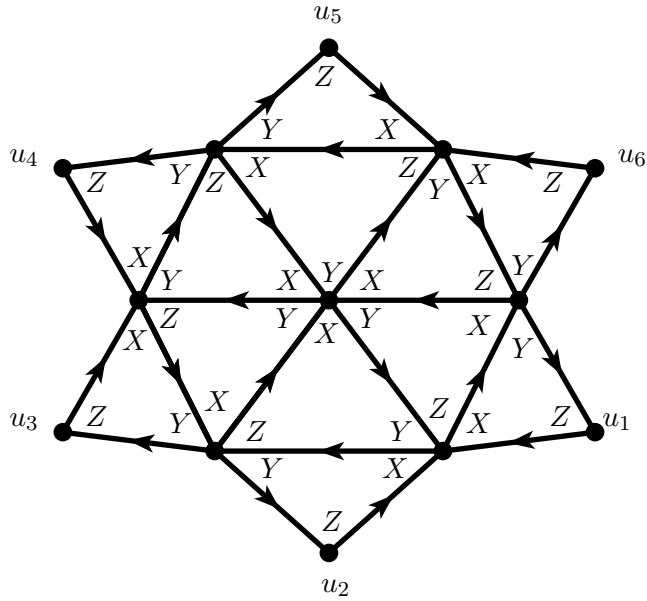

\begin{lemma}\label{lem:label(E.j)}
Suppose that all interior vertices of $D$ have degree at least 6 and all labels of interior vertices of degree 6 are either $(XYZ)^2$ or label $(E.j)$ for precisely one $j\in \{0,1,2\}$.
If $v$ is an interior vertex of degree 6 in $D$ with label $(E.j)$ and where all the neighbours of $v$ are interior vertices of degree 6 then every neighbour of $v$ has two neighbours which are either boundary vertices or have degree at least~8.
\end{lemma}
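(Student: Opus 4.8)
The plan is to fix coordinates around $v$, use a short counting argument to pin down the labels of the six neighbours of $v$, and then apply Lemma~\ref{lem:twolabelled(XYZ)^2} to the six faces lying just outside the hexagon spanned by those neighbours; the configuration that emerges is exactly the one drawn in Figure~\ref{fig:ICFig4}.

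We may assume $j=0$ (cyclically permuting the edge types $X,Y,Z$ replaces the group by an isomorphic one), so the label of $v$ is $(XY)^3$ and the six corners at $v$, read cyclically, are $X,Y,X,Y,X,Y$. Write $F_1,\dots,F_6$ for the faces around $v$ and $w_1,\dots,w_6$ for its neighbours, indexed so that $F_i$ is the face $vw_iw_{i+1}$ and the corner of $F_i$ at $v$ is $X$ for $i$ odd and $Y$ for $i$ even; by hypothesis each $w_i$ is interior of degree~$6$. Each face carries exactly one corner of each type, so, as the corner of $F_i$ at $v$ is not $Z$, the $Z$-corner of $F_i$ lies at $w_i$ or at $w_{i+1}$. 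These six $Z$-corners are distributed among $w_1,\dots,w_6$, and no $w_j$ can get two: the corners of $w_j$ in $F_{j-1}$ and $F_j$ are consecutive entries of the reduced closed walk in $\Gamma$ that is the label of $w_j$, and since every vertex of $\Gamma$ lies on a unique edge of type $Z$, two consecutive $Z$'s would be a backtrack. Hence each $w_j$ receives exactly one $Z$-corner; in particular its label contains $Z$, so — being interior of degree~$6$ — its label is $(XYZ)^2$ rather than $(E.0)=(XY)^3$.

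Now for each $j$ let $P_j=w_{j-1}w_jp_j$ be the face across the edge $w_{j-1}w_j$ from $F_{j-1}$, so that $P_j,P_{j+1}$ are the two faces of $w_j$ meeting a hexagon edge, and read the faces around $w_j$ in the cyclic order $F_{j-1},F_j,P_{j+1},\ast,\ast,P_j$. In the label $(XYZ)^2$ of $w_j$ the two $Z$-corners are antipodal, and exactly one of them sits at $F_{j-1}$ or $F_j$ by the previous paragraph; its antipode therefore sits at one of the two faces marked $\ast$, so the corner of $w_j$ at $P_j$, and likewise at $P_{j+1}$, is $X$ or $Y$, never $Z$. Applying this also to $w_{j-1}$ (for which $P_j$ plays the role of $P_{(j-1)+1}$), we see that in the face $P_j$ neither of $w_{j-1},w_j$ carries the $Z$-corner, so $p_j$ must: the corner of $P_j$ at $p_j$ is $Z$. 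Since all three edges of $P_j$ lie between two faces of this neighbourhood, $P_j$ is an interior face, and two of its vertices have label $(XYZ)^2$, so Lemma~\ref{lem:twolabelled(XYZ)^2} forces the label of $p_j$ to contain a subword $aZa$ with $a\in\{X,Y\}$. But $(XYZ)^2$ has no subword of shape $aba$, and the only length-three subwords of $(E.0)=(XY)^3$ are $XYX$ and $YXY$; hence the label of $p_j$ is neither of the two labels available to an interior vertex of degree~$6$. As $\Gamma$ is bipartite (parts $\{x_i\}$ and $\{x_i^{-1}\}$), every interior vertex of $D$ has even degree, so $p_j$ is a boundary vertex or has degree at least~$8$. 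The same applies to $p_{j+1}$, and since $p_j,p_{j+1}$ are neighbours of $w_j$ this gives the conclusion for $w_j$, hence for every neighbour of $v$.

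The crux, and the part needing the most care, is the corner bookkeeping of the third paragraph: showing that each of the six ``petal'' faces $P_j$ has its $Z$-corner at the outer vertex $p_j$. This is precisely the information recorded in Figure~\ref{fig:ICFig4}, and it is where the hypothesis that $v$ is labelled $(E.j)$ rather than $(XYZ)^2$ enters decisively (together with there being only one value of $j$, so that the forbidden subwords $aZa$ really are forbidden). One should also dispatch degenerate possibilities — coincidences among the $w_j$, the $p_j$, or among the faces $F_i,P_j$ — but these are excluded once all the $w_j$ are interior of degree~$6$, passing if necessary to an embedded sub-configuration.
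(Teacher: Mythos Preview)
Your proof is correct and follows essentially the same approach as the paper's: show that each neighbour of $v$ is labelled $(XYZ)^2$, then apply Lemma~\ref{lem:twolabelled(XYZ)^2} to the six outer ``petal'' faces to force the corner at each outer vertex $p_j$ to be $Z$ and hence its label to be neither $(XYZ)^2$ nor $(XY)^3$. The paper establishes these facts by direct inspection of Figure~\ref{fig:ICFig4}, whereas you supply a written counting/antipodal argument for the same corner labelling; you also make explicit the bipartiteness of $\Gamma$ (hence even degree of interior vertices) to pass from ``degree $>6$'' to ``degree $\ge 8$'', a step the paper leaves implicit.
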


\begin{proof}
Consider first the case $(E.0)$, that is, a vertex label $(XY)^3$. As shown in Figure~\ref{fig:ICFig4} all the neighbours of $v$ must have label $(XYZ)^2$. Then each of the vertices $u_1,\ldots ,u_6$ has a corner labelled $Z$. If a vertex $u_i$ ($1\leq i\leq 6$) is interior then if it is of degree 6, its label is not $(XYZ)^2$, by Lemma~\ref{lem:twolabelled(XYZ)^2}, and so it must be $(XY)^3$, a contradiction. Therefore $u_i$ is either interior of degree at least 8, or a boundary vertex, as required. The cases $(E.1),(E.2)$ are dealt with by replacing $X$ by $Y$, $Y$ by $Z$, and $Z$ by $X$, as explained in Section~\ref{sec:positiveshortcycles}.
\end{proof}

\begin{figure}[ht]
\begin{center}

\psset{xunit=1cm,yunit=1cm,algebraic=true,dimen=middle,dotstyle=o,dotsize=7pt 0,linewidth=1.6pt,arrowsize=3pt 2,arrowinset=0.25}
\begin{pspicture*}(4,7.5)(11,13)
\rput[tl](6.7,10.4){$X$}
\rput[tl](7.4,10.6){$Y$}
\rput[tl](7.9,9.8){$Z$}
\rput[tl](7.3,9.7){$X$}
\rput[tl](6.8,9.8){$Y$}
\rput[tl](7.8,10.4){$X$}

\rput[tl](8.3,8.4){$Y$}
\rput[tl](5.9,8.6){$X$}
\rput[tl](6.4,8.4){$Z$}
\rput[tl](6.5,11.8){$X$}
\rput[tl](5.9,11.6){$Z$}
\rput[tl](5.3,9.9){$Z$}
\rput[tl](8.8,11.6){$Y$}
\rput[tl](8.7,8.7){$X$}
\rput[tl](8.3,11.8){$Z$}
%
\rput[tl](9.4,10.4){$Z$}
\rput[tl](9.4,9.8){$Y$}

\rput[tl](5.3,10.4){$Y$}


\psline[linewidth=2pt,ArrowInside=->,ArrowInsidePos=0.5](5,10)(6,8)
\psline[linewidth=2pt,ArrowInside=->,ArrowInsidePos=0.5](9,8)(6,8)
\psline[linewidth=2pt,ArrowInside=->,ArrowInsidePos=0.5](9,8)(10,10)
\psline[linewidth=2pt,ArrowInside=->,ArrowInsidePos=0.5](9,12)(10,10)
\psline[linewidth=2pt,ArrowInside=->,ArrowInsidePos=0.5](9,12)(6,12)
\psline[linewidth=2pt,ArrowInside=->,ArrowInsidePos=0.5](5,10)(6,12)
\psline[linewidth=2pt,ArrowInside=->,ArrowInsidePos=0.5](6,12)(7.5,10)
\psline[linewidth=2pt,ArrowInside=->,ArrowInsidePos=0.5](10,10)(7.5,10)
\psline[linewidth=2pt,ArrowInside=->,ArrowInsidePos=0.5](6,8)(7.5,10)
\psline[linewidth=2pt,ArrowInside=->,ArrowInsidePos=0.5](7.5,10)(5,10)
\psline[linewidth=2pt,ArrowInside=->,ArrowInsidePos=0.5](7.5,10)(9,12)
\psline[linewidth=2pt,ArrowInside=->,ArrowInsidePos=0.5](7.5,10)(9,8)

\begin{scriptsize}
\psdots[dotsize=7pt,dotstyle=*,linecolor=black](6,12)
\psdots[dotsize=7pt,dotstyle=*,linecolor=black](5,10)
\psdots[dotsize=7pt,dotstyle=*,linecolor=black](10,10)
\psdots[dotsize=7pt,dotstyle=*,linecolor=black](9,8)
\psdots[dotsize=7pt,dotstyle=*,linecolor=black](6,8)
\psdots[dotsize=7pt,dotstyle=*,linecolor=black](7.5,10)
\psdots[dotsize=7pt,dotstyle=*,linecolor=black](9,12)
\end{scriptsize}
\end{pspicture*}

\end{center}
  \caption{Neighbourhood of an interior vertex labelled $(F1.0)$.\label{fig:ICFig5}}
\end{figure}
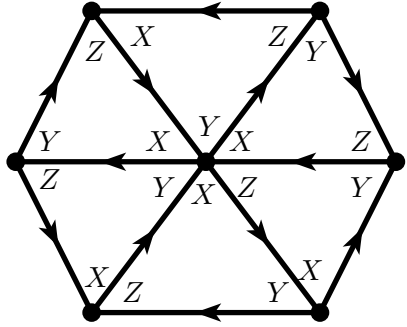

\begin{lemma}\label{lem:label(F1F2.j)}
Suppose that all interior vertices of $D$ have degree at least 6 and all labels of interior vertices of degree 6 are either
$(XYZ)^2$ or $(F1.j)$ (resp. $(F2.j)$) for precisely one $j\in \{0,1,2\}$.
If $v$ is an interior vertex of degree 6 in $D$ with label $(F1.j)$ (resp. $(F2.j)$) and where all the neighbours of $v$ are interior vertices then $v$ has a neighbour of degree at least 8.
\end{lemma}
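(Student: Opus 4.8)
The plan is to argue by contradiction from the forced local picture around $v$, using Lemma~\ref{lem:twolabelled(XYZ)^2}.

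First I would reduce to a single case. The relabelling $X\mapsto Y\mapsto Z\mapsto X$ of Section~\ref{sec:positiveshortcycles} carries reduced van Kampen diagrams over $P_n(x_0x_kx_l)$ to reduced van Kampen diagrams over the isomorphic presentation $P_n(x_0x_{l-k}x_{-k})$, preserves vertex degrees, and replaces each condition $(*.j)$ by $(*.j+1)$, so it reduces the cases $j=1,2$ to $j=0$; the relabelling $X\leftrightarrow Y$ of Section~\ref{sec:positiveshortcycles}, which interchanges $(F1.j)$ with $(F2.j)$, reduces the $(F2.j)$ case to the $(F1.j)$ case. Both relabellings fix the label $(XYZ)^2$ (viewed, as usual, up to cyclic permutation and inversion), so they preserve the standing hypotheses of the lemma. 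Hence I may assume $v$ has label $(F1.0)=(XY)^2XZ$. Since $v$ has degree~$6$, the cyclic word of corner labels of its six faces is $(XY)^2XZ$; and since the star graph of $P_n(x_0x_kx_l)$ carries exactly one edge of each type at every vertex, the labels and orientations of the six edges at $v$, and hence the corner labels of each of the six faces at their remaining two vertices, are all determined up to the shift~$\theta$. This is the configuration of Figure~\ref{fig:ICFig5} (or its mirror image, handled the same way). Let $f$ be the face at $v$ whose corner at $v$ is the occurrence of $X$ in the cyclic word $(XY)^2XZ$ flanked by the two distinct letters $Z$ and $Y$, and let $v_1,v_2$ be its other two vertices (these are the vertices labelled $v_1,v_2$ in Figure~\ref{fig:ICFig5}). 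From Figure~\ref{fig:ICFig5}, $f$ has corner $Z$ at $v_1$ and corner $Y$ at $v_2$, the other face of $D$ containing the edge $vv_1$ has corner $Y$ at $v_1$, and the other face of $D$ containing the edge $vv_2$ has corner $Z$ at $v_2$.

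Now suppose, for a contradiction, that $v$ has no neighbour of degree at least~$8$. Every neighbour of $v$ is an interior vertex, hence has degree at least~$6$; and the star graph of $P_n(x_0x_kx_l)$ is bipartite — each of its edges joins a vertex $x_i$ to a vertex $x_j^{-1}$, because every relator is a positive word — so every interior vertex of $D$ has even degree. Thus each neighbour of $v$ has degree exactly~$6$, and so by hypothesis its label is $(XYZ)^2$ or $(F1.0)$. The two corners at $v_1$ belonging to faces that contain $v$ are $Z$ (from $f$) and $Y$, and they are consecutive in the cyclic label of $v_1$; since the cyclic word $(F1.0)=XYXYXZ$ contains no subword $YZ$ (equivalently $ZY$), the label of $v_1$ cannot be $(F1.0)$, so it is $(XYZ)^2$. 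The same argument at $v_2$ (whose two corners in faces at $v$ are $Y$ and $Z$) shows the label of $v_2$ is $(XYZ)^2$. Since $v,v_1,v_2$ are all interior, $f$ is an interior face, two of whose vertices, $v_1$ and $v_2$, have label $(XYZ)^2$; so Lemma~\ref{lem:twolabelled(XYZ)^2} forces the label of the third vertex $v$ to contain a subword $aba$ with $a\neq b$, occurring so that its central letter $b$ is the corner of $f$ at $v$. That is, the corner of $f$ at $v$ is flanked in the cyclic word $(F1.0)$ by two copies of a single letter~$a$. But that corner is an $X$ flanked by $Z$ and $Y$, with $Z\neq Y$ — a contradiction. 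Hence $v$ has a neighbour of degree at least~$8$; undoing the reductions of the first paragraph completes the proof.

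The step that needs the real work is justifying the forced configuration of Figure~\ref{fig:ICFig5} — in particular verifying the corner labels $Z$ at $v_1$ and $Y$ at $v_2$ for the face $f$, together with the labels $Y$ at $v_1$ and $Z$ at $v_2$ for the two adjacent faces — and checking that the mirror-image configuration, and the configurations produced by the symmetries used in the reduction, behave identically. Once the configuration is in hand, the deduction via Lemma~\ref{lem:twolabelled(XYZ)^2} is immediate.
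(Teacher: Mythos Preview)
Your proof is correct and takes essentially the same approach as the paper: reduce to $(F1.0)$ by symmetry, read off from the forced configuration of Figure~\ref{fig:ICFig5} that two adjacent neighbours of $v$ carry a $YZ$ subword and hence must be labelled $(XYZ)^2$, and then invoke Lemma~\ref{lem:twolabelled(XYZ)^2} for the contradiction. One small imprecision: the cyclic word $(XY)^2XZ$ contains \emph{two} occurrences of $X$ flanked by distinct neighbours, and only one of the corresponding faces has both its remaining vertices carrying a $YZ$ subword --- but your appeal to the vertices $v_1,v_2$ of Figure~\ref{fig:ICFig5} pins down the correct face.
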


\begin{proof}
Consider the case $(F1.0)$, that is, $v$ has label $(XY)^2XZ$ and suppose that all neighbours of $v$ have degree~6. Then Figure~\ref{fig:ICFig5} shows one of the two possible labellings of neighbours that can occur. Since two adjacent neighbours have $YZ$ as a cyclic subword of their label, these must each be labelled $(XYZ)^2$, but this is impossible by Lemma~\ref{lem:twolabelled(XYZ)^2}; therefore $v$ has a neighbour of degree at least~8. The same conclusion can be obtained if the second possible labelling of neighbours occurs. The cases $(F1.1),(F1.2)$ are dealt with by replacing $X$ by $Y$, $Y$ by $Z$, and $Z$ by $X$. The cases $(F2.j)$ are obtained from the cases $(F1.j)$ by interchanging the roles of $X$ and $Y$, as described in Section~\ref{sec:positiveshortcycles}.
\end{proof}

We are now in a position to be able to establish the existence of a suitable isoperimetric function.

\begin{lemma}\label{lem:curvaturepositivecase}
Let $n\geq 2$ and suppose that none of the congruences $(B.j),(C.j),(D.j)$ hold ($0\leq j\leq 2)$ and that at most one congruence $(E.j)$, $(F1.j)$, $(F2.j)$ holds ($j\in \{0,1,2\}$). Then $G_n(x_0x_kx_{l})$ has a linear isoperimetric function.
\end{lemma}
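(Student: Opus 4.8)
The plan is to establish a linear isoperimetric bound directly by a combinatorial Gauss--Bonnet argument, in the spirit of the method of \cite[Theorem~13]{HowieWilliams}, applied to a reduced van Kampen diagram $D$ over $P$ whose boundary is a simple closed curve. Assign the combinatorial angle $\pi/3$ to every corner of every face; since every face is an equilateral triangle (Figure~\ref{fig:positiveface}) it then has zero curvature, every interior vertex $v$ has curvature $\kappa(v)=\tfrac{\pi}{3}(6-\deg v)$, every boundary vertex $v$ incident to $c(v)$ faces has curvature $\kappa(v)=\pi-\tfrac{\pi}{3}c(v)\leq\pi$, and Gauss--Bonnet reads $\sum_v\kappa(v)=2\pi$. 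Since $P$ is $T(6)$, by~\cite{HillPrideVella} the link of an interior vertex is a cycle in the star graph $\Gamma$ of length at least $6$, so $\deg v\geq 6$ and $\kappa(v)\leq 0$ there; call an interior vertex of degree $6$, or a boundary vertex on exactly three faces, \emph{flat} (curvature $0$), and call any other vertex \emph{negative} (curvature $<0$) or \emph{positive} (curvature $>0$; necessarily a boundary vertex on at most two faces). As there are at most $|\partial D|$ positive vertices, each contributing at most $\pi$, Gauss--Bonnet gives $\sum_{v\ \mathrm{negative}}(-\kappa(v))\leq\pi\,|\partial D|$; reading off degrees, both $\sum_{v\ \mathrm{interior},\,\deg v\geq 7}\deg v$ and $\sum_{v\in\partial D}c(v)$ are bounded above by a universal constant times $|\partial D|$.

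The crux is to show that flat vertices cannot tile large regions of $D$: there is a universal constant $K$ such that every face all of whose vertices are interior flat vertices lies within combinatorial distance $K$ of a negative vertex (and when none of $(E.j),(F1.j),(F2.j)$ holds no such face exists at all). If none of $(E.j),(F1.j),(F2.j)$ holds then, by Lemma~\ref{lem:6cycle}, every interior flat vertex carries the label $(XYZ)^2$; Lemma~\ref{lem:twolabelled(XYZ)^2} then shows a face meeting two $(XYZ)^2$-vertices has a third vertex whose label contains a subword $aba$, and since $(B.j),(C.j),(D.j),(E.j),(F1.j),(F2.j)$ are all excluded, Lemma~\ref{lem:6cycle} leaves no cycle of length $6$ with such a subword, so that third vertex is negative. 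When exactly one of $(E.j),(F1.j),(F2.j)$ holds the same argument rules out a face with three $(XYZ)^2$-vertices, so any all-interior flat face carries the exotic label at one of its vertices $v$; the local analysis of Lemmas~\ref{lem:twolabelled(XYZ)^2}--\ref{lem:label(F1F2.j)} then forces a negative vertex within distance $2$ of $v$ (either some neighbour of $v$ is already not interior flat, or all are and then, by Lemma~\ref{lem:label(E.j)} resp.\ Lemma~\ref{lem:label(F1F2.j)}, a vertex of degree $\geq 8$ or a boundary vertex occurs among the vertices at distance $\leq 2$). By the symmetry noted in Section~\ref{sec:positiveshortcycles} (replacing $(k,l)$ by $(l-k,-k)$ permutes $X,Y,Z$ and shifts the index $j$) it suffices to treat $j=0$ in each case, so $K\leq 2$.

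It remains to convert this into a count. Faces with an edge on $\partial D$ number at most $|\partial D|$, and faces with a vertex on $\partial D$ but no edge on $\partial D$ number at most $\sum_{v\in\partial D}c(v)=O(|\partial D|)$; the remaining faces have all three vertices interior. A non-flat such face meets an interior vertex of degree $\geq 7$, so their number is at most $\sum_{v\ \mathrm{interior},\,\deg v\geq 7}\deg v=O(|\partial D|)$, while a flat one lies within distance $K$ of a negative vertex $v$, and the number of all-interior flat faces within distance $K$ of a fixed vertex $v$ is at most $C_K$ times the number of faces at $v$, with $C_K$ depending only on $K$ (once one passes into a region of flat vertices the local geometry is that of the flat triangular plane, in which a ball of radius $K$ contains boundedly many faces). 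Summing over negative vertices and using the degree bounds from the first paragraph gives $O(|\partial D|)$ all-interior flat faces, hence $\mathrm{Area}(D)=O(|\partial D|)$; taking $D$ of minimal area with a given boundary word $W$ yields $\mathrm{Area}(W)\leq f(|W|)$ for a linear $f$. The main work — and where Lemmas~\ref{lem:twolabelled(XYZ)^2}--\ref{lem:label(F1F2.j)} are essential — is the structural step of the second paragraph, bounding the distance from an all-interior flat face to negative curvature together with the case analysis over which (if any) of $(E.j),(F1.j),(F2.j)$ holds; the counting above is then routine.
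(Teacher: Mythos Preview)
Your argument is essentially correct and reaches the same conclusion, but it follows a genuinely different route from the paper's proof. The paper assigns \emph{variable} corner angles (values $47$, $59$, $66$ according to the local configuration) so that every face has curvature $\leq -1$ and, after a small curvature transfer in the $(E.j)$ case, every interior vertex has curvature $\leq 0$; Gauss--Bonnet then yields the explicit bound $|F|\leq 133|B|-360$ directly. You instead keep the uniform angle $\pi/3$, so faces carry no curvature, and recover the face count indirectly: the structural Lemmas~\ref{lem:twolabelled(XYZ)^2}--\ref{lem:label(F1F2.j)} bound the combinatorial distance from any all-interior-flat face to a ``special'' vertex, and a ball-growth estimate (legitimate, since you only traverse degree-$6$ vertices) converts this into a linear bound on $|F|$. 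Both proofs rest on the same local analysis (Lemmas~\ref{lem:twolabelled(XYZ)^2}--\ref{lem:label(F1F2.j)}); the paper's angle scheme packages that analysis into a single curvature inequality and gives an explicit isoperimetric constant, while your approach is more geometric and avoids the curvature-transfer step, at the cost of a less explicit constant.

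One small correction: your structural claim in the second paragraph should read ``within distance $K$ of a negative vertex \emph{or a boundary vertex}'', since Lemmas~\ref{lem:label(E.j)} and~\ref{lem:label(F1F2.j)} only produce a vertex that is either of degree $\geq 8$ or on $\partial D$, and a boundary vertex need not be negative in your sense (it could have $c(v)\leq 3$). This does not affect the conclusion, because in your counting you already control $\sum_{v\in\partial D} c(v)$ by the same Gauss--Bonnet inequality, so summing over negative \emph{and} boundary vertices still gives $O(|\partial D|)$.
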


\begin{proof}
As at the beginning of this section, let $N\in \mathbb{N}$ and let $W$ be a freely reduced word in the free group $F_n$ (with generators $x_0,\ldots ,x_{n-1})$ of length at most $N$ that represents the identity of $G$ and let $D$ be a reduced van Kampen diagram whose boundary $\partial D$ is a simple closed curve with label $W$. We let $I$ denote the set of interior vertices of $D$, $B$ denote the set of boundary vertices of $D$, and $F$ denote the set of faces of $D$. Then $\mathrm{Area}(W)\leq |F|$. Writing $\pi$ to denote $180$, we define the curvature of a face $f$ by $\kappa(f)=-\pi + (\mathrm{sum~of~angles~in}~f)$, the curvature of an interior vertex $v$ by $\kappa (v)=2\pi - (\mathrm{sum~of~angles~at}~v)$, and the curvature of a boundary vertex $\hat{v}$ by \linebreak $\kappa(\hat{v})=\pi - (\mathrm{sum~of~angles~at}~\hat{v})$. It follows from the Gauss-Bonnet Theorem that
\begin{equation}
  \sum_{v\in I} \kappa(v) + \sum_{\hat{v}\in B} \kappa (\hat{v}) +\sum_{f\in F} \kappa(f)=2\pi \label{eq:GaussBonnet}
\end{equation}
(see \cite[Section~4]{MccammondWise} and the references therein).

Since none of the congruences $(B.j),(C.j),(D.j)$ hold ($0\leq j\leq 2)$ every interior vertex of $D$ is of degree at least 6, and since at most one congruence $(E.j)$, $(F1.j)$, $(F2.j)$ holds, the label of an interior vertex of degree~6 is either $(XYZ)^2$ or it is the label corresponding to that congruence, given in Table~\ref{tab:BCDEFG}.

We assign angles to the corners of the faces in $D$ as follows. If $v$ is a boundary vertex or an interior vertex of degree at least 8 then assign angle 47 to every corner at $v$. Assume now that $v$ is an interior vertex of degree~6 and consider a face $f$ with vertices $v$ and $u,w$: if $u,w$ are interior of degree 6 then assign angle 59 to the corner of $f$ at $v$; otherwise assign 66 to the corner of $f$ at $v$.

Then, if a face $f$ contains a boundary vertex then $\kappa(f)\leq -\pi +(47+66+66)=-1$; if a face $f$ has all its vertices interior and one vertex of degree at least $8$ then $\kappa(f)\leq -\pi +(47+66+66)=-1$; if all the vertices of a face $f$ are interior of degree 6 then $\kappa(f)\leq -\pi+(59+59+59)=-3$. Therefore $\kappa(f)\leq -1$ for all $f\in F$.

We now consider curvature of the vertices. If $v$ is an interior vertex of degree at least 8 then $\kappa(v)\leq 2\pi -8(47) =-16$. If $v$ is an interior vertex of degree 6 with a neighbour that is either on the boundary $\partial D$ or has degree at least 8, then $\kappa(v)\leq 2\pi -2(66) -4(59)=-8$.

Now suppose that $v$ is an interior vertex of degree 6 with all its neighbours interior of degree~6. Then $\kappa(v)=2\pi -6(59)=6$ and by Lemma~\ref{lem:label(F1F2.j)} the label of $v$ is either $(XYZ)^2$ or $(E.j)$ for some $j\in \{0,1,2\}$. If the label of $v$ is $(XYZ)^2$ then, since precisely one other label of degree 6 vertices is possible, Lemma~\ref{lem:INT(XYZ)^2twoneighbours} implies that $v$ must have two adjacent neighbours, each labelled $(XYZ)^2$, but this is impossible by Lemma~\ref{lem:twolabelled(XYZ)^2}.
If the label of $v$ is $(E.j)$ (for some $j\in \{0,1,2\}$) then Lemma~\ref{lem:label(E.j)} implies that every neighbour $v_i$ ($1\leq i \leq 6$) of $v$ has two neighbours which are either boundary vertices or have degree at least 8. Therefore, for each $i\in\{1,\dots,6\}$ the curvature $\kappa (v_i)\leq 2\pi - 4(59)-2(66)=-8$. In this situation, transfer curvature of $-1$ from each vertex $v_i$ to vertex $v$; the resulting curvatures are $\kappa(v_i)\leq -8+1=-7$ ($1\leq i\leq 6$) and $\kappa(v)=6-6(1)=0$. Since each vertex $v_i$ has degree 6, the maximum number of times curvature can be transferred away from $v_i$ is 6, so its curvature cannot exceed $\kappa(v)=-8+6(1)=-2$. Therefore for each interior vertex $v$ we have $\kappa(v)\leq 0$.

Now~(\ref{eq:GaussBonnet}) implies
\begin{alignat*}{1}
  2\pi &= \sum_{v\in I} \kappa(v) + \sum_{\hat{v}\in B} \kappa (\hat{v}) +\sum_{f\in F} \kappa(f)\\
  &\leq \sum_{v\in I} 0 + \sum_{\hat{v}\in B} (\pi - \mathrm{sum~of~angles~at}~\hat{v}) +\sum_{f\in F} (-1)\\
  &= |B|\pi - \sum_{\hat{v}\in B} (\mathrm{sum~of~angles~at}~\hat{v}) - |F|
\end{alignat*}
 so
\[  \sum_{\hat{v}\in B} (\mathrm{sum~of~angles~at}~\hat{v}) \leq (|B|-2)\pi - |F|.\]
On the other hand, the corner angle at any boundary vertex is 47, so the sum of angles over the boundary vertices is bounded below by $47|B|$. Therefore $47|B|\leq (|B|-2)\pi - |F|$, so $|F| \leq 133|B| -360$.

But $\mathrm{Area}(W)\leq |F|$ and $|B|\leq N$ so $\mathrm{Area}(W)\leq 133N -360$, so $f(N)=133N-360$ is a linear isoperimetric function.
\end{proof}

We now have all the ingredients to prove Theorem~\ref{thm:positive}.

\subsection{Proof of Theorem~\ref{thm:positive}}\label{sec:proofofpositive}

Suppose that $n\geq 2$, $0\leq k,l<n$, $(n,k,l)=1$ and that the cyclic presentation $P_n(x_0x_kx_l)$ satisfies T(6). If $P_n(x_0x_kx_l)$ has a freely redundant relator then $n=3$ and $G\cong \Z*\Z$ (which is non-elementary hyperbolic) so we may assume that $P_n(x_0x_kx_l)$ has no freely redundant relators. Then Lemma~\ref{lem:positiveT6} implies that none of the congruences $(B.j),(C.j),(D.j)$ ($0\leq j\leq 2)$ (of Table~\ref{tab:BCDEFG}) hold and so $n\geq 7$. If $n=7$ or $8$ then $G\cong G_n(x_0x_1x_5)$ (see, for example, \cite[Table~2]{MohamedWilliams}) so is not hyperbolic by Lemmas~\ref{lem:n=7nonhyp},\ref{lem:n=8nonhyp}. Assume then that $n>8$. If more than one of the congruences $(E.j),(F1.j)$, $(F2.j)$ ($0\leq j\leq 2)$ hold then one of the cases (c),(d),(e)  of Lemma~\ref{lem:combinationsofEFG} hold. In cases (c),(d) $G$ is not hyperbolic by Corollaries~\ref{cor:n=21nonhyp},\ref{cor:n=24nonhyp} and in case (e) $G$ is non-elementary hyperbolic, by Example~\ref{ex:n=27}. Thus we may assume that at most one of $(E.j),(F1.j)$, $(F2.j)$ ($0\leq j\leq 2$) hold, in which case Lemma~\ref{lem:curvaturepositivecase} implies that $G_n(x_0x_kx_l)$ has a linear isoperimetric function, and hence is hyperbolic.  By~\cite[Corollary~5.2]{EdjvetWilliams} $G$ contains a non-abelian free subgroup so it is non-elementary hyperbolic.

\section{The non-positive case}\label{sec:nonpositivecase}

As in~\cite{HowieWilliams}, we express our arguments in terms of parameters $A=k, B=k-m$.

Let $\Gamma$ be the star graph of the cyclic presentation $P_n(x_0x_mx_k^{-1})$. Then $\Gamma$ has vertices $x_i,x_i^{-1}$ and edges $x_i-x_{i+m}^{-1}$, $x_i-x_{i+B}$, $x_i^{-1}-x_{i+A}^{-1}$ ($0\leq i<n$), which we will refer to as edges of type $X,Y,Z$, respectively. Replacing parameter $k$ by $m-k$ corresponds to interchanging the roles of edges of types $Y$ and $Z$, and so will correspond to interchanging the roles of conditions $(*.0)$ and $(*.1)$ in Table~\ref{tab:nonposshortcycles}, and replacing the group $G_n(x_0x_mx_k^{-1})$ by the isomorphic copy $G_n(x_0x_mx_{m-k}^{-1})$. (To see that  $G_n(x_0x_mx_{k}^{-1})\cong G_n(x_0x_mx_{m-k}^{-1})$ replace generators $x_i$ by $x_i^{-1}$, invert the relators, negate the subscripts and set $j=-i-m$ to get the relators $x_jx_{j+m}x_{j+m-k}^{-1}$ of $G_n(x_0x_mx_{m-k}^{-1})$.)

As in the positive case, we are interested in cycles of length at most $6$ in $\Gamma$, so we analyze these in Section~\ref{sec:nonpositiveshortcycles}. We observe that if a particular cycle type of length~6 (which we refer to as $(\gamma+)$) occurs then $G=G_n(x_0x_mx_k^{-1})$ is isomorphic to $G_n(x_0x_{n/2+2}x_1^{-1})=H(n,n/2+2)$ which (in Section~\ref{sec:nonhyperbolicgroupsnonpositive}) we show is non-hyperbolic whenever its presentation satisfies $T(6)$. We then show that if two of the remaining cycle types of length~6 occur, then $G_n(x_0x_mx_k^{-1})$ is isomorphic to one of a few groups with low values of $n$, all but one of which turn out to be hyperbolic (the other, $G_8(x_0x_4x_1^{-1})=H(8,4)$, being non-hyperbolic). In Section~\ref{sec:curvaturenonpositive} we consider the case when exactly one cycle type of length~6 occurs and perform a detailed analysis of van Kampen diagrams over the defining presentation to prove that $G_n(x_0x_mx_k^{-1})$ has a linear isoperimetric function, and hence is hyperbolic. We then combine these results to prove Theorem~\ref{thm:nonpositive} in Section~\ref{sec:proofofnonpositive}.

\subsection{Analysis of short cycles in the star graph of $P_n(x_0x_mx_k^{-1})$}\label{sec:nonpositiveshortcycles}

Short cycles  in $\Gamma$ were analyzed in~\cite{HowieWilliams}:
\begin{lemma}[{\cite[Theorem~10]{HowieWilliams}}]\label{lem:nonposshortcycles}
Let $n\geq 2$, $0\leq m,k<n$, $m\neq k$, $k\neq 0$ and set $A=k,B=k-m$, and let $\Gamma$ be the star graph of $P_n(x_0x_mx_k^{-1})$.
\begin{itemize}
  \item[(a)] $\Gamma$ has a cycle of length less than $6$ if and only if at least one of the congruences $(\rho.j)$, $(\sigma+.j)$, $(\sigma-.j)$, $(\tau+.j)$, $(\tau-.j)$ of Table~\ref{tab:nonposshortcycles} holds, in which case a label of the cycle is the corresponding entry of the table.

  \item[(b)] $\Gamma$ has a cycle  of length 6 if and only if at least one of the congruences ($\alpha.j$),($\beta+.j$),($\beta-.j$), ($\gamma+$),($\gamma-$) of Table~\ref{tab:nonposshortcycles} holds, in which case a label of the cycle  is the corresponding entry of the table.
\end{itemize}
\end{lemma}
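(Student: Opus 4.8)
The plan is to classify directly all reduced closed paths of length at most $6$ in the star graph $\Gamma$. The first step is to record the shape of $\Gamma$: its vertex set splits as $V^{+}=\{x_{0},\dots,x_{n-1}\}$ and $V^{-}=\{x_{0}^{-1},\dots,x_{n-1}^{-1}\}$; the $X$-edges $x_{i}-x_{i+m}^{-1}$ run between $V^{+}$ and $V^{-}$; the $Y$-edges $x_{i}-x_{i+B}$ lie inside $V^{+}$; and the $Z$-edges $x_{i}^{-1}-x_{i+A}^{-1}$ lie inside $V^{-}$. If $m\equiv 0$ then the $X$-edges are loops and $\Gamma$ has a cycle of length $1$, so one may assume $m\not\equiv 0$, and then the $X$-edges form a perfect matching $V^{+}\leftrightarrow V^{-}$; since $B=k-m\not\equiv 0$ and $A=k\not\equiv 0$, the $Y$- and $Z$-edges carry no loops either. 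Thus $\Gamma$ is cubic, with each $V^{+}$-vertex incident to one $X$-edge and two $Y$-edges, and, dually, each $V^{-}$-vertex incident to one $X$-edge and two $Z$-edges.

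The second step is the key combinatorial reduction. Reading off the cyclic sequence of edge-types around a reduced closed path, one sees that it (i) never contains the subword $XX$, because immediately after traversing an $X$-edge one sits at a vertex whose only other incident edges are of type $Z$ (or of type $Y$); and (ii) contains an even number of occurrences of $X$, since each $X$-edge switches between $V^{+}$ and $V^{-}$ and the path is closed. Moreover, inside a maximal run of $Y$'s the path must change its index by $+B$ at every step or by $-B$ at every step (it cannot backtrack), inside a maximal run of $Z$'s by a fixed $\pm A$, while across the two $X$-edges of a pattern $XZ^{a}XY^{b}$ the index changes $+m$ and $-m$ cancel. Hence the only admissible edge-type patterns of length at most $6$ are the all-$Y$ words $Y^{c}$ and the all-$Z$ words $Z^{c}$ ($c\le 6$), together with the two-$X$ words $XZ^{a}XY^{b}$ with $a+b\le 4$ --- there being no room for four or six $X$'s in length $6$ without producing an $XX$ --- and for each of these patterns the requirement that the path close up becomes a single linear congruence modulo $n$: $cB\equiv 0$, or $cA\equiv 0$, or $\pm aA\pm bB\equiv 0$. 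Rewriting these congruences in terms of $A=k$ and $B=k-m$ should then reproduce exactly the entries of Table~\ref{tab:nonposshortcycles}; the involution $k\mapsto m-k$, i.e.\ $(A,B)\mapsto(-B,-A)$, which interchanges the roles of the $Y$- and $Z$-edges, is what organises the rows into the $j=0$ and $j=1$ pairs, while the conditions fixed by this involution are precisely those with no index $j$. For part~(b) one simply restricts attention to the patterns of length exactly $6$, namely $Y^{6}$, $Z^{6}$ and the $XZ^{a}XY^{b}$ with $a+b=4$.

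The third step --- which I expect to be the main obstacle --- is the non-degeneracy bookkeeping needed to turn ``the closing congruence has a solution'' into ``there is a genuine cycle of exactly this length'', and conversely. In one direction, given a congruence from the table one must verify that the associated walk is an embedded cycle of the claimed length and not a proper power or a walk that revisits a vertex; for example $Y^{c}$ gives an embedded $c$-gon precisely when $B$ has order $c$ in $\Z_{n}$, a proper divisor yielding a strictly shorter cycle already listed, and for the mixed patterns one checks that the at most six indices traversed are pairwise distinct, which can fail only when some $2A\equiv 0$, $2B\equiv 0$ or coincidence of indices occurs --- in which case a strictly shorter cycle, itself in the table, is present. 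In the other direction, any non-simple reduced closed path of length at most $6$ contains a strictly shorter simple cycle, so nothing is lost by considering only simple cycles. Carrying this out carefully, case by case, using the standing hypotheses $A,B\not\equiv 0$ and $(n,m,k)=1$ and being careful not to count cyclic rotations or reflections of a pattern as distinct cycle types, is where essentially all the work lies; once the ``no $XX$, even number of $X$'s, monotone runs'' observations are available, the congruences themselves are immediate.
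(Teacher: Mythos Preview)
The paper does not actually prove this lemma; it is quoted verbatim from \cite[Theorem~10]{HowieWilliams}, so there is no ``paper's own proof'' to compare against. Your plan is the natural direct argument and is essentially correct: the bipartition $V^{+}\cup V^{-}$ forces an even number of $X$-edges in any closed walk, the cubic structure rules out $XX$, and the remaining patterns $Y^{c}$, $Z^{c}$, $XZ^{a}XY^{b}$ yield exactly the linear congruences in Table~\ref{tab:nonposshortcycles}. This is presumably close to what the cited reference does.

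Two small corrections to make before you write it up. First, when $m\equiv 0$ the $X$-edges are \emph{not} loops: they join the distinct vertices $x_{i}$ and $x_{i}^{-1}$, so there is no cycle of length~$1$. The case $m\equiv 0$ is instead captured by $(\sigma-)$, which produces a genuine $4$-cycle $XYXZ$; your later analysis handles this correctly, so just delete the loop remark. Second, you invoke the hypothesis $(n,m,k)=1$ in your third step, but the lemma as stated does not assume this (only $m\neq k$ and $k\neq 0$, i.e.\ $A,B\not\equiv 0$); the cycle classification is purely about congruences and does not need coprimality, so drop that reference. With those fixes your bookkeeping in step three --- checking that a solution of the closing congruence not already accounted for by a shorter-cycle condition really gives an embedded cycle of the stated length --- is exactly the right thing to do and goes through as you describe.
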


\begin{corollary}\label{cor:T6T7nonpositive}
Let $n\geq 2$ and suppose $(n,m,k)=1$, $0\leq m,k<n$, $m\neq k$, $k\neq 0$. Then
\begin{itemize}
  \item[(a)] $P_n(x_0x_mx_k^{-1})$ satisfies $T(6)$ if and only if none of the congruences $(\rho.j)$, $(\sigma+.j)$, $(\sigma-.j)$, $(\tau+.j)$, $(\tau-.j)$ of Table~\ref{tab:nonposshortcycles} holds;
  \item[(b)] $P_n(x_0x_mx_k^{-1})$ satisfies $T(7)$ if and only if none of the congruences $(\rho.j)$, $(\sigma+.j)$, $(\sigma-.j)$, $(\tau+.j)$, $(\tau-.j)$, ($\alpha.j$), ($\beta+.j$), ($\beta-.j$), ($\gamma+$), ($\gamma-$) of Table~\ref{tab:nonposshortcycles} holds.
\end{itemize}
\end{corollary}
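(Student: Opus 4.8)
The plan is to derive both parts directly from the combinatorial description of the T($q$) condition together with Lemma~\ref{lem:nonposshortcycles}. First I would record that, since $m\neq k$ and $k\neq 0$, every relator $x_ix_{i+m}x_{i+k}^{-1}$ of $P_n(x_0x_mx_k^{-1})$ is cyclically reduced of length~$3$; in particular each relator has length at least~$3$, so the criterion of Hill, Pride and Vella~\cite{HillPrideVella} applies in the form quoted earlier: the presentation satisfies T($q$) if and only if its star graph $\Gamma$ has no cycle of length less than~$q$.

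For part (a) it then suffices to note that T(6) holds if and only if $\Gamma$ has no cycle of length less than~$6$, which by Lemma~\ref{lem:nonposshortcycles}(a) is equivalent to none of the congruences $(\rho.j)$, $(\sigma+.j)$, $(\sigma-.j)$, $(\tau+.j)$, $(\tau-.j)$ holding. For part (b) I would split the condition ``$\Gamma$ has no cycle of length less than~$7$'' into the two conditions ``$\Gamma$ has no cycle of length less than~$6$'' and ``$\Gamma$ has no cycle of length exactly~$6$''. The first is equivalent, exactly as in part (a), to the failure of all the congruences listed there; the second is, by Lemma~\ref{lem:nonposshortcycles}(b), equivalent to the failure of all of $(\alpha.j)$, $(\beta+.j)$, $(\beta-.j)$, $(\gamma+)$, $(\gamma-)$. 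Conjoining the two gives the asserted characterisation.

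I do not expect any real obstacle here: the corollary is essentially a repackaging of Lemma~\ref{lem:nonposshortcycles} through~\cite{HillPrideVella}. The only points deserving a word of care are (i) checking that the relators are genuinely cyclically reduced of length~$3$, so that the Hill--Pride--Vella criterion applies as stated, and (ii) observing that the coprimality hypothesis $(n,m,k)=1$ appearing in the corollary --- which is not needed in Lemma~\ref{lem:nonposshortcycles} --- is merely a harmless standing assumption inherited from Theorem~\ref{thm:nonpositive} and plays no role in the argument.
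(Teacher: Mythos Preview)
Your proposal is correct and is exactly the intended derivation: the paper states the corollary without proof, treating it as immediate from Lemma~\ref{lem:nonposshortcycles} together with the Hill--Pride--Vella criterion already invoked in Section~\ref{sec:positivecase}. Your remarks on points (i) and (ii) are apt but neither requires further argument.
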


\begin{table}
\begin{center}
\begin{tabular}{|cr|ll|}\hline
& $j$ & \textbf{0} & \textbf{1} \\\hline
($\rho$.$j$) & $m,k$ congruence & {$k-m\equiv \frac{n}{2}; \pm \frac{n}{3}; \pm \frac{n}{4}; \pm \frac{n}{5}, \pm \frac{2n}{5}$} & { $k\equiv \frac{n}{2}; \pm \frac{n}{3}; \pm \frac{n}{4}; \pm \frac{n}{5}, \pm \frac{2n}{5}$ }\\
& $A,B$ congruence& $B\equiv \frac{n}{2}; \pm \frac{n}{3}; \pm \frac{n}{4}; \pm \frac{n}{5}, \pm \frac{2n}{5}$ & {$A\equiv \frac{n}{2}; \pm \frac{n}{3}; \pm \frac{n}{4}; \pm \frac{n}{5}, \pm \frac{2n}{5}$}\\
& cycle type& {$Y^2$; $Y^3$; $Y^4$; $Y^5$} & {$Z^2$; $Z^3$; $Z^4$; $Z^5$}\\\hline
($\sigma+$) & $m,k$ congruence &$2k-m\equiv0$ & $2k-m\equiv 0$\\
& $A,B$ congruence&$A+B\equiv 0$ & $B+A\equiv 0$\\
& cycle type& $XYXZ$ & $XZXY$\\\hline
($\sigma-$) & $m,k$ congruence &$m\equiv0$ & $m\equiv 0$\\
& $A,B$ congruence&$A-B\equiv 0$ & $B-A\equiv 0$\\
& cycle type& $XYXZ$ & $XZXY$\\\hline
($\tau+.j$) & $m,k$ congruence &$3k-2m\equiv0$ & $3k-m\equiv 0$\\
& $A,B$ congruence&$A+2B\equiv 0$ & $B+2A\equiv 0$\\
& cycle type& $XZXYY$ & $XYXZZ$\\\hline
($\tau-.j$) & $m,k$ congruence &$2m-k\equiv0$ & $m+k\equiv 0$\\
& $A,B$ congruence&$A-2B\equiv 0$ & $B-2A\equiv 0$\\
& cycle type& $XZXYY$ & $XYXZZ$ \\\hline
($\alpha.j$) & $m,k$ congruence &{$k-m\equiv \pm \frac{n}{6}$} & $k\equiv \pm \frac{n}{6}$\\
& $A,B$ congruence&$B\equiv \pm \frac{n}{6}$ & $A\equiv \pm \frac{n}{6}$\\
& cycle type& $Y^6$ & $Z^6$\\\hline
($\beta+.j$) & $m,k$ congruence &$4k-3m\equiv0$ & $4k-m\equiv 0$\\
& $A,B$ congruence&$A+3B\equiv 0$ & $B+3A\equiv 0$\\
& cycle type& $XZXYYY$ & $XYXZZZ$\\\hline
($\beta-.j$) & $m,k$ congruence &$3m-2k\equiv0$ & $2k+m\equiv 0$\\
& $A,B$ congruence&$A-3B\equiv 0$ & $B-3A\equiv 0$\\
& cycle type& $XZXYYY$ & $XYXZZZ$\\\hline
($\gamma+$) & $m,k$ congruence & $2k-m\equiv n/2$ & $2k-m\equiv n/2$\\
& $A,B$ congruence&$A+B\equiv n/2$ & $B+A\equiv n/2$\\
& cycle type& $XZZXYY$ & $XYYXZZ$\\\hline
($\gamma-$) & $m,k$ congruence &$m\equiv n/2$ & $m\equiv n/2$\\
& $A,B$ congruence&$A-B\equiv n/2$ & $B-A\equiv n/2$\\
& cycle type& $XZZXYY$ & $XYYXZZ$\\\hline
\end{tabular}
\end{center}
\caption{Congruences (mod~$n$) corresponding to short cycles in the star graph of $P_n(x_0x_mx_k^{-1})$. Here $A=k, B=k-m$. \label{tab:nonposshortcycles}}
\end{table}

Note that the two ($\gamma+$) conditions in Table~\ref{tab:nonposshortcycles} are identical conditions and the two ($\gamma-$) conditions are identical; for this reason we do not add the `$.j$' to these conditions. We first identify the groups $G_n(x_0x_mx_k^{-1})$ in the presence of a cycle of type $(\gamma+)$ of Table~\ref{tab:nonposshortcycles}.

\begin{lemma}\label{lem:2(A-B)=0groups}
Let $n\geq 2$, $0\leq m,k<n$ and let $A=k,B=k-m$. Suppose $A+B\equiv n/2$~mod~$n$ and that $(n,m,k)=1$. Then $G_n(x_0x_mx_k^{-1})\cong G_n(x_0x_{n/2+2}x_1^{-1})=H(n,n+2)$.
\end{lemma}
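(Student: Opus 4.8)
The plan is to obtain the isomorphism by two elementary presentation moves. The first is the move already recorded at the start of Section~\ref{sec:nonpositivecase}: $G_n(x_0x_mx_k^{-1})\cong G_n(x_0x_mx_{m-k}^{-1})$ (subscripts mod~$n$). The second is the \emph{multiplier} move: if $(a,n)=1$ then $x_i\mapsto x_{ai}$ is an automorphism $\mu_a$ of $F_n$ satisfying $\mu_a\theta=\theta^a\mu_a$, so it carries the defining relators $\theta^i(x_0x_mx_k^{-1})$ to $\theta^{ai}(x_0x_{am}x_{ak}^{-1})$; since $\{ai\bmod n\}=\mathbb{Z}_n$ these are exactly the defining relators of $G_n(x_0x_{am}x_{ak}^{-1})$, so $G_n(x_0x_mx_k^{-1})\cong G_n(x_0x_{am}x_{ak}^{-1})$. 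Applying the second move with $a=k^{-1}$ will put the relator into the shape $x_0x_qx_1^{-1}$, provided $(k,n)=1$; the whole argument therefore reduces to arranging $(k,n)=1$ and then computing $q$.

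First note $n$ is even (since $n/2\in\mathbb{Z}$), and the hypothesis $A+B\equiv n/2$ reads $2k-m\equiv n/2\pmod n$. Since $(n,m,k)=1$ and $2\mid n$, the integers $m,k$ are not both even. If $k$ is even then $m$ is odd, so $m-k$ is odd; moreover after the first move the pair $(m,k)$ becomes $(m,m-k)$, for which $(n,m,m-k)=1$ still holds and the congruence persists because $m-2k\equiv-(2k-m)\equiv-n/2\equiv n/2\pmod n$. Hence, applying the first move if necessary, we may assume $k$ is odd. Then $(k,n)=1$: any prime $p\mid\gcd(k,n)$ is odd (as $k$ is odd), so $p\mid n/2$, hence $p\mid 2k-n/2$, hence (as $p\mid n$) $p\mid m$, giving $p\mid(n,m,k)=1$, a contradiction. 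This number-theoretic reduction is the crux of the proof and is the one place the coprimality hypothesis is genuinely used.

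Finally, let $a$ be the inverse of $k$ mod~$n$; since $n$ is even and $ak\equiv1\pmod n$, the product $ak$ is odd, so $a$ is odd. The multiplier move gives $G\cong G_n(x_0x_{am}x_{ak}^{-1})=G_n(x_0x_{am}x_1^{-1})=H(n,am)$, with the subscript $am$ read mod~$n$. Multiplying $2k-m\equiv n/2$ by $a$ and using $a\cdot\tfrac n2\equiv\tfrac n2\pmod n$ (valid because $a$ is odd) yields $2-am\equiv\tfrac n2$, that is, $am\equiv 2-\tfrac n2\equiv \tfrac n2+2\pmod n$. Therefore $G\cong H(n,n/2+2)=G_n(x_0x_{n/2+2}x_1^{-1})$, as claimed. (For very small $n$ the quantity $n/2+2$ is to be reduced mod~$n$; the computation above is unaffected, and the resulting groups are degenerate.)
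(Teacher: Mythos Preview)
Your proof is correct and uses the same core tool as the paper, namely the multiplier isomorphism $G_n(x_0x_mx_k^{-1})\cong G_n(x_0x_{am}x_{ak}^{-1})$ for $(a,n)=1$ (this is \cite[Lemma~1.3]{BardakovVesnin}, which the paper also invokes). The only difference is in handling the case where $k$ may fail to be a unit mod~$n$: the paper deduces $(n,k)\in\{1,2\}$ and in the latter case first passes to $G_n(x_0x_{n/2+4}x_2^{-1})$ before invoking an additional cited isomorphism from~\cite{W-CHR}, whereas you apply the involution $k\mapsto m-k$ at the outset to force $k$ odd, after which $(k,n)=1$ follows directly and the multiplier finishes in one step---a slightly more self-contained variant of the same argument.
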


\begin{proof}
The hypotheses imply $m\equiv 2k+n/2$~mod~$n$ so $1=(n,m,k)=(n,n/2+2k,k)$ which implies $(n/2,k)=1$ so either $(n,k)=1$ or ($n/2$ is odd and $(n,k)=2$). In the former case $G_n(x_0x_mx_k^{-1}) \cong G_n(x_0x_{n/2+2k}x_k^{-1})\cong G_n(x_0x_{n/2+2}x_1^{-1})$ (by~\cite[Lemma~1.3]{BardakovVesnin}); in the latter case $G_n(x_0x_mx_k^{-1}) \cong G_n(x_0x_{n/2+2k}x_k^{-1})\cong G_n(x_0x_{n/2+4}x_2^{-1})$ which is isomorphic to $G_n(x_0x_{n/2+2}x_1^{-1})$ by~\cite[Lemma~1.3]{BardakovVesnin}, \cite[Lemma~7]{W-CHR}.
\end{proof}

In Lemma~\ref{lem:H(n,n/2+2)nothyp} we will show that the groups $H(n,n/2+2)$ are not hyperbolic for any even $n\geq 8$, $n \neq 10$. We now consider the groups that arise when more than one of the remaining length~6 cycle cases hold.

\begin{lemma}\label{lem:nonposmorethanonelength6NewVersion}
Let $n\geq 2$, $0\leq m,k<n$, $m\neq k$, $k\neq 0$ where $(n,m,k)=1$, and set $A=k,B=k-m$, and let $\Gamma$ be the star graph of $P_n(x_0x_mx_k^{-1})$. Suppose that none of the congruences $(\rho.j)$, $(\sigma+.j)$, $(\sigma-.j)$, $(\tau+.j)$, $(\tau-.j)$, hold for $j\in \{0,1\}$ and that $(\gamma+)$ does not hold. If more than one of the congruences $(\alpha.j)$, $(\beta+.j)$, $(\beta-.j)$, $(\gamma-)$ holds, then $G_n(x_0x_mx_k^{-1})$ is isomorphic to one of $H(8,4)$, $H(8,6)$, $H(10,4)$, $H(18,4)$, $H(18,16)$.
\end{lemma}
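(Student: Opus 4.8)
The plan is to follow the template of Lemma~\ref{lem:combinationsofEFG}, working throughout with the parameters $A=k$ and $B=k-m$, so that the hypothesis $(n,m,k)=1$ becomes $(n,A,B)=1$. In $(A,B)$-form the congruences under consideration read $A\equiv\pm n/6$ or $B\equiv\pm n/6$ for $(\alpha.j)$ (which in particular forces $6\mid n$), $A+3B\equiv0$ or $B+3A\equiv0$ for $(\beta+.j)$, $A-3B\equiv0$ or $B-3A\equiv0$ for $(\beta-.j)$, and $A-B\equiv n/2$ for $(\gamma-)$; while the congruences we are entitled to exclude (by hypothesis together with Corollary~\ref{cor:T6T7nonpositive}(a)) are $A\pm B\equiv0$, $A\pm 2B\equiv0$, $B\pm 2A\equiv0$, $A+B\equiv n/2$, and $A$ or $B\equiv n/2,\pm n/3,\pm n/4,\pm n/5,\pm 2n/5$. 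I would also invoke from the outset the symmetry recorded at the start of Section~\ref{sec:nonpositivecase}: replacing $k$ by $m-k$ interchanges the suffixes $0$ and $1$ throughout Table~\ref{tab:nonposshortcycles} and replaces $G$ by the isomorphic group $G_n(x_0x_mx_{m-k}^{-1})$, so whenever a pair of $(\alpha.j),(\beta+.j),(\beta-.j)$ is involved we may normalise a suffix to $0$, roughly halving the number of cases.

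The core of the argument is then to run through the finitely many pairs of the congruences $(\alpha.j),(\beta+.j),(\beta-.j),(\gamma-)$ that could hold simultaneously, eliminating one of $A,B$ from each pair. In about half the pairs one lands directly on a forbidden congruence: for instance $(\alpha.0)$ together with any one of $(\beta+.0),(\beta-.0),(\gamma-)$ gives $A\equiv n/2$ or $A\equiv\pm n/3$, i.e.\ a $(\rho.1)$ congruence; $(\alpha.0)$ with $(\alpha.1)$ gives $A\equiv\pm B$, i.e.\ $(\sigma-.j)$ or $(\sigma+.j)$; $(\beta+.0)$ with $(\beta-.0)$ gives $2A\equiv0$, hence $A\equiv0$ (impossible since $k\neq0$) or $(\rho.1)$; $(\beta+.1)$ with $(\beta-.1)$ gives $2B\equiv0$, hence $m\equiv k$ or $(\rho.0)$; and $(\beta-.0)$ with $(\beta-.1)$ gives $2(A+B)\equiv0$, i.e.\ $(\sigma+.j)$ or $(\gamma+)$ --- each a contradiction. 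In every remaining pair, eliminating a variable yields a congruence $cA\equiv0$ or $cB\equiv0\pmod n$ with $c\in\{4,8,10,18\}$ (for the two surviving pairs involving an $(\alpha.j)$, namely $(\alpha.0)\&(\beta+.1)$ and $(\alpha.0)\&(\beta-.1)$, one obtains $18A\equiv0$ together with $6\mid n$); since in each such pair the relation is of the form $B\equiv\pm 3A$ or $A\equiv\pm 3B$, it forces $(n,A)=1$ or $(n,B)=1$, so $n$ divides one of $4,8,10,18$. The proper divisors, and the values $n=5,6$ where they can occur, are all discarded because then some $(\rho.j),(\sigma\pm.j)$ or $(\tau\pm.j)$ congruence holds (or the pair of congruences has no solution), contradicting $T(6)$; hence $n\in\{8,10,18\}$.

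To conclude, for each surviving value of $n$ I would list the finitely many admissible $(A,B)$ (equivalently $(m,k)$) permitted by the relevant pair of congruences, delete those for which a forbidden congruence --- notably $(\gamma+)$ --- actually holds, and then reduce what remains, via multiplication of $(m,k)$ by a unit mod~$n$ (\cite[Lemma~1.3]{BardakovVesnin}), the $k\mapsto m-k$ isomorphism, and \cite[Lemma~7]{W-CHR}, to one of the Gilbert--Howie groups $H(8,4)$, $H(8,6)$, $H(10,4)$, $H(18,4)$, $H(18,16)$. I expect this last step --- the explicit identification of the isomorphism type --- to be the main obstacle: for $n=18$ several parameter pairs arise (from $(\alpha.0)\&(\beta+.1)$, $(\alpha.0)\&(\beta-.1)$ and their symmetric partners) and one must check that each collapses to $H(18,4)$ or $H(18,16)$, while the $n=8$ and $n=10$ pairs each spread over a handful of $(m,k)$ that must be shown mutually isomorphic. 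One must also keep track throughout that the $(\alpha.j)$ congruences are only meaningful when $6\mid n$, so that the divisibility deductions based on $n/6$ are valid.
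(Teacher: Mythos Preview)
Your proposal is correct and follows essentially the same case-by-case strategy as the paper: enumerate the pairs among $(\alpha.j),(\beta\pm.j),(\gamma-)$, eliminate one of $A,B$ to reach either a forbidden congruence or a divisibility constraint $n\mid c$ forcing $n\in\{8,10,18\}$, and then normalise the surviving $(m,k)$ via \cite[Lemma~1.3]{BardakovVesnin} and \cite[Lemma~7]{W-CHR}. Your explicit use of the $k\mapsto m-k$ symmetry to halve the casework is a tidy improvement, and you in fact cover the pair $(\beta-.0)\,\&\,(\beta-.1)$ (dispatched via $2(A+B)\equiv 0$, hence $(\sigma+)$ or $(\gamma+)$), a case the paper's enumeration does not list; one small clean-up is that once $(\beta-.j)\,\&\,(\gamma-)$ is recognised as directly forbidden (via $4B\equiv 0$ or $4A\equiv 0$, giving $(\rho.j)$), the value $c=4$ should not appear among your ``remaining'' constants.
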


\begin{proof}
If $(\alpha.0)$ and $(\alpha.1)$ hold then $A\equiv \pm n/6$ and $B\equiv \pm n/6$~mod~$n$ so either $(\sigma+)$ or $(\sigma-)$ holds, a contradiction.
If $(\alpha.0)$ and $(\gamma-)$ hold then $(\rho.1)$ holds, a contradiction.
If $(\alpha.1)$ and $(\gamma-)$ hold then $(\rho.0)$ holds.

If $(\alpha.0)$ and ($(\beta+.0)$ or $(\beta-.0)$) hold then $2A\equiv 0$~mod~$n$, so $(\rho.1)$ holds, a contradiction; if $(\alpha.1)$ and ($(\beta+.1)$ or $(\beta-.1)$) hold then $2B\equiv 0$~mod~$n$, so $(\rho.0)$ holds.
If $(\beta+.0)$ and $(\beta-.0)$ hold then $2A\equiv 0$~mod~$n$ so $(\rho.1)$ holds, a contradiction; if $(\beta+.1)$ and $(\beta-.1)$ hold then $2B\equiv 0$~mod~$n$ so $(\rho.0)$ holds.

If $(\beta-.0)$ and $(\gamma-)$ hold then $A\equiv 3B$~mod~$n$ and $4B\equiv 0$~mod~$n$ so $(\rho.0)$ holds, a contradiction. Similarly, if $(\beta-.1)$ and $(\gamma-)$ hold then $(\rho.1)$ holds.

If $(\beta+.0)$ and $(\beta+.1)$ hold then $B\equiv -3A$~mod~$n$ and $8A\equiv 0$~mod~$n$; moreover $1=(n,A,B)=(n,A,-3A)=(n,A)$ so $n|8$, and if $n<8$ then $(\rho.0)$ holds, a contradiction, so $n=8$ and $G_n(x_0x_mx_k^{-1})$ is isomorphic to $H(8,4)$. Similarly, if $(\beta-.0)$ and $(\beta-.1)$ hold then $G_n(x_0x_mx_k^{-1})\cong H(8,6)$.
If $(\beta+.0)$ and $(\gamma-)$ hold then $A\equiv -3B$~mod~$n$ and $8B\equiv 0$~mod~$n$; moreover $1=(n,A,B)=(n,-3B,B)=(n,B)$ so $n|8$, and if $n<8$ then $(\rho.0)$ holds, a contradiction, so $n=8$ and $G_n(x_0x_mx_k^{-1})\cong H(8,4)$. Similarly, if $(\beta+.1)$ and $(\gamma-)$ hold then $n=8$ and $G_n(x_0x_mx_k^{-1})\cong H(8,4)$.

If $(\beta+.0)$ and $(\beta-.1)$ hold then $B\equiv 3A$~mod~$n$ and $10A\equiv 0$~mod~$n$; moreover $1=(n,A,B)=(n,A)$ so $n|10$, and if $n<10$ then $(\rho.0)$ holds, a contradiction, so $n=10$. Then $(k,n)=1$ so by~\cite[Lemma~1.3]{BardakovVesnin} we may assume $k=1$, so $A=1$ and $k-m=B=3$, and hence $m=8$; thus $G_n(x_0x_mx_k^{-1})\cong G_{10}(x_0x_8x_1^{-1})$. Similarly, if $(\beta+.1)$ and $(\beta-.0)$ hold then $n=10$ and $G_n(x_0x_mx_k^{-1})\cong G_{10}(x_0x_4x_1^{-1})$. By~\cite[Theorem~2]{COS} we have $G_{10}(x_0x_8x_1^{-1})\cong G_{10}(x_0x_4x_1^{-1})= H(10,4)$.

If $(\alpha.0)$ and $(\beta+.1)$ hold then $B\equiv -3A$ and $18A\equiv 0$~mod~$n$; moreover $1=(n,A,B)=(n,A)$ so $n|18$ and if $n\leq 9$ then $(\rho.0)$ holds so $n=18$. Then $(k,n)=1$ so we may assume $k=1$ so $k-m=B=-3$ and hence $m=4$ so $G_n(x_0x_mx_k^{-1})\cong G_{18}(x_0x_4x_1^{-1})=H(18,4)$.
If $(\alpha.1)$ and $(\beta+.0)$ hold then $A\equiv -3B$ and $18B\equiv 0$~mod~$n$; moreover $1=(n,A,B)=(n,B)$ so $n|18$ and again $n=18$. Then $(k-m,n)=1$ so we may assume that $B=k-m=1$ so $k=A=-3$, and hence $m=-4$, so $G_n(x_0x_mx_k^{-1})=G_{18}(x_0x_{-4}x_{-3}^{-1})\cong G_{18}(x_0x_4x_1^{-1})=H(18,4)$ by~\cite[Lemma~1.3]{BardakovVesnin}, \cite[Lemma~7]{Wsurvey}.

If $(\alpha.0)$ and $(\beta-.1)$ hold then $B\equiv 3A$ and $18A\equiv 0$~mod~$n$; moreover $1=(n,A,B)=(n,A)$ so $n|18$ and if $n\leq 9$ then $(\rho.0)$ holds so $n=18$. Then $(k,n)=1$ so we may assume $k=1$ so $k-m=B=3$ and hence $m=-2$ so $G_n(x_0x_mx_k^{-1})\cong G_{18}(x_0x_{16}x_1^{-1})=H(18,16)$.
If $(\alpha.1)$ and $(\beta-.0)$ hold then $A\equiv 3B$ and $18B\equiv 0$~mod~$n$; moreover $1=(n,A,B)=(n,B)$ so $n|18$ and if $n\leq 9$ then $(\rho.1)$ holds so $n=18$. Then $(k-m,n)=1$ so we may assume $k-m=1$ so $k=A=3$ and hence $k=3, m=2$ so $G_n(x_0x_mx_k^{-1})\cong G_{18}(x_0x_{2}x_3^{-1})\cong H(18,16)$ by~\cite[Lemma~1.3]{BardakovVesnin}, \cite[Lemma~7]{Wsurvey}.
\end{proof}

In Corollary~\ref{cor:G8nonposnothyp} we showed that $H(8,4)$ is not hyperbolic; in Lemma~\ref{lem:H(n,n/2+2)nothyp} we will show that $H(8,6)$ is not hyperbolic. We now show that the remaining groups arising in Lemma~\ref{lem:nonposmorethanonelength6NewVersion} are hyperbolic:

\begin{example}\label{ex:hypgroupswithn=9,10,18}
\em Using KBMAG~\cite{KBMAG} it is straightforward to show that the groups $H(10,4)$, $H(18,4)$, $H(18,16)$ are hyperbolic, and since they contain a non-abelian free subgroup (by~\cite[Corollary~11]{HowieWilliams}), they are non-elementary hyperbolic.\em
\end{example}

\subsection{Non-hyperbolic groups $G_n(x_0x_mx_k^{-1})$}\label{sec:nonhyperbolicgroupsnonpositive}

We now show that the $T(6)$ groups $H(n,n/2+2)$ arising in Lemma~\ref{lem:2(A-B)=0groups} are not hyperbolic. (Note that if $n=2,4,6$ or $10$ then the presentation of $H(n,n/2+2)$ does not satisfy $T(6)$, by Corollary~\ref{cor:T6T7nonpositive}.) As in the proof of Lemma~\ref{lem:n=8nonhyp} we do this by an application of the Flat Plane Theorem.

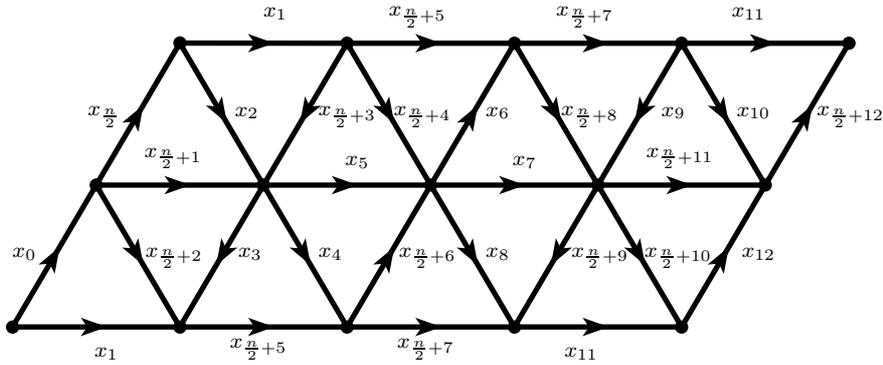
\begin{figure}[ht]
\begin{center}
\psset{xunit=1.1cm,yunit=1.1cm,algebraic=true,dimen=middle,dotstyle=o,dotsize=5pt 0,linewidth=1.6pt,arrowsize=3pt 2,arrowinset=0.25}
\begin{pspicture*}(0.5,-2.2)(12.0,2.5)
\psline[linewidth=2.pt,ArrowInside=->,ArrowInsidePos=0.5](1.,-1.71)(2.,0.)
\psline[linewidth=2.pt,ArrowInside=->,ArrowInsidePos=0.5](2.,0.)(3.,1.712)
\psline[linewidth=2.pt,ArrowInside=->,ArrowInsidePos=0.5](3.,1.712)(5.,1.71)
\psline[linewidth=2.pt,ArrowInside=->,ArrowInsidePos=0.5](5.,1.71)(7.,1.71)
\psline[linewidth=2.pt,ArrowInside=->,ArrowInsidePos=0.5](7.,1.71)(9.,1.71)
\psline[linewidth=2.pt,ArrowInside=->,ArrowInsidePos=0.5](9.,1.71)(11.,1.71)
\psline[linewidth=2.pt,ArrowInside=->,ArrowInsidePos=0.5](1.,-1.71)(3.,-1.71)
\psline[linewidth=2.pt,ArrowInside=->,ArrowInsidePos=0.5](3.,-1.71)(5.,-1.71)
\psline[linewidth=2.pt,ArrowInside=->,ArrowInsidePos=0.5](5.,-1.71)(7.,-1.71)
\psline[linewidth=2.pt,ArrowInside=->,ArrowInsidePos=0.5](7.,-1.71)(9.,-1.71)
\psline[linewidth=2.pt,ArrowInside=->,ArrowInsidePos=0.5](9.,-1.71)(10.,0.)
\psline[linewidth=2.pt,ArrowInside=->,ArrowInsidePos=0.5](10.,0.)(11.,1.71)
\psline[linewidth=2.pt,ArrowInside=->,ArrowInsidePos=0.5](2.,0.)(4.,0.)
\psline[linewidth=2.pt,ArrowInside=->,ArrowInsidePos=0.5](4.,0.)(6.,0.)
\psline[linewidth=2.pt,ArrowInside=->,ArrowInsidePos=0.5](6.,0.)(8.,0.)
\psline[linewidth=2.pt,ArrowInside=->,ArrowInsidePos=0.5](8.,0.)(10.,0.)
\psline[linewidth=2.pt,ArrowInside=->,ArrowInsidePos=0.5](3.,1.712)(4.,0.)
\psline[linewidth=2.pt,ArrowInside=->,ArrowInsidePos=0.5](5.,1.71)(6.,0.)
\psline[linewidth=2.pt,ArrowInside=->,ArrowInsidePos=0.5](7.,1.71)(8.,0.)
\psline[linewidth=2.pt,ArrowInside=->,ArrowInsidePos=0.5](9.,1.71)(10.,0.)
\psline[linewidth=2.pt,ArrowInside=->,ArrowInsidePos=0.5](2.,0.)(3.,-1.71)
\psline[linewidth=2.pt,ArrowInside=->,ArrowInsidePos=0.5](4.,0.)(5.,-1.71)
\psline[linewidth=2.pt,ArrowInside=->,ArrowInsidePos=0.5](6.,0.)(7.,-1.71)
\psline[linewidth=2.pt,ArrowInside=->,ArrowInsidePos=0.5](8.,0.)(9.,-1.71)
\psline[linewidth=2.pt,ArrowInside=->,ArrowInsidePos=0.5](5.,1.71)(4.,0.)
\psline[linewidth=2.pt,ArrowInside=->,ArrowInsidePos=0.5](4.,0.)(3.,-1.71)
\psline[linewidth=2.pt,ArrowInside=->,ArrowInsidePos=0.5](5.,-1.71)(6.,0.)
\psline[linewidth=2.pt,ArrowInside=->,ArrowInsidePos=0.5](6.,0.)(7.,1.71)
\psline[linewidth=2.pt,ArrowInside=->,ArrowInsidePos=0.5](9.,1.71)(8.,0.)
\psline[linewidth=2.pt,ArrowInside=->,ArrowInsidePos=0.5](8.,0.)(7.,-1.71)
\begin{scriptsize}
\psdots[dotstyle=*](1.,-1.71)
\psdots[dotstyle=*](3.,1.712)
\psdots[dotstyle=*](5.,1.71)
\psdots[dotstyle=*](7.,1.71)
\psdots[dotstyle=*](9.,1.71)
\psdots[dotstyle=*](3.,-1.71)
\psdots[dotstyle=*](5.,-1.71)
\psdots[dotstyle=*](7.,-1.71)
\psdots[dotstyle=*](9.,-1.71)
\psdots[dotstyle=*](11.,1.71)
\psdots[dotstyle=*](8.,0.)
\psdots[dotstyle=*](2.,0.)
\psdots[dotstyle=*](4.,0.)
\psdots[dotstyle=*](6.,0.)
\psdots[dotstyle=*](10.,0.)

\rput[bl](1.9,0.7){$x_{\frac{n}{2}}$}
\rput[bl](3.66,0.8){$x_2$}
\rput[bl](4.66,0.7){$x_{\frac{n}{2}+3}$}
\rput[bl](5.56,0.7){$x_{\frac{n}{2}+4}$}
\rput[bl](6.66,0.8){$x_6$}
\rput[bl](7.56,0.7){{$x_{\frac{n}{2}+8}$}}
\rput[bl](8.76,0.8){$x_9$}
\rput[bl](9.66,0.8){$x_{10}$}
\rput[bl](10.62,0.7){$x_{\frac{n}{2}+12}$}
\rput[bl](4.,2.000){$x_{1}$}
\rput[bl](5.5,1.900){$x_{\frac{n}{2}+5}$}
\rput[bl](7.5,1.900){$x_{\frac{n}{2}+7}$}
\rput[bl](9.6,2.000){$x_{11}$}
\rput[bl](2.58,0.2){$x_{\frac{n}{2}+1}$}
\rput[bl](4.98,0.2){$x_5$}
\rput[bl](6.98,0.2){$x_7$}
\rput[bl](8.58,0.2){$x_{\frac{n}{2}+11}$}
\rput[bl](1.0,-0.9){$x_0$}
\rput[bl](2.59,-1.0){$x_{\frac{n}{2}+2}$}
\rput[bl](3.70,-0.9){$x_3$}
\rput[bl](4.66,-0.9){$x_4$}
\rput[bl](5.62,-1.0){{$x_{\frac{n}{2}+6}$}}
\rput[bl](6.66,-0.9){$x_8$}
\rput[bl](7.59,-1.0){{ $x_{\frac{n}{2}+9}$}}
\rput[bl](8.56,-1.0){$x_{\frac{n}{2}+10}$}
\rput[bl](9.72,-0.9){$x_{12}$}
\rput[bl](1.98,-2.1){$x_1$}
\rput[bl](3.6,-2.1){$x_{\frac{n}{2}+5}$}
\rput[bl](5.6,-2.1){$x_{\frac{n}{2}+7}$}
\rput[bl](7.6,-2.1){$x_{11}$}
\end{scriptsize}
\end{pspicture*}
\end{center}
  \caption{A van Kampen diagram over the presentation $P_n(x_0x_{n/2+2}x_1^{-1})$ with boundary label $(x_0x_{n/2}) (x_1x_{n/2+5}x_{n/2+7}x_{11})(x_{12}x_{n/2+12})^{-1} (x_1x_{n/2+5}x_{n/2+7}x_{11})^{-1}$.\label{fig:nonpositiveflat}}
\end{figure}

\begin{lemma}\label{lem:H(n,n/2+2)nothyp}
Suppose $n\geq 8$ is even, $n\neq 10$. Then $H(n,n/2+2)=G_n(x_0x_{n/2+2}x_{1}^{-1})$ is not hyperbolic.
\end{lemma}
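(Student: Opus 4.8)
The plan is to follow the proof of Lemma~\ref{lem:n=8nonhyp}: construct an isometrically embedded Euclidean plane in the geometric realisation $\tilde{C}$ of the Cayley complex of $P=P_n(x_0x_{n/2+2}x_1^{-1})$ (with each edge of length~$1$) and then apply the Flat Plane Theorem of~\cite{Bridson}.

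First I would record that, for even $n\geq 8$ with $n\neq 10$, the presentation $P$ satisfies $C(3)$-$T(6)$. Here $A=k=1$ and $B=k-m\equiv n/2-1$, and a short inspection of the congruences of Corollary~\ref{cor:T6T7nonpositive} shows that each of $(\rho.j),(\sigma\pm.j),(\tau\pm.j)$ forces $n$ to divide $10$ (or a smaller number), so none can hold; in particular $n=10$ is excluded precisely because there $B\equiv 2n/5$, so $(\rho.0)$ holds. As observed in Section~\ref{sec:intro}, a $T(6)$ presentation all of whose relators have length~$3$ is $C(3)$-$T(6)$. Hence every $2$-cell of $\tilde{C}$ is an equilateral Euclidean triangle, the link of every vertex has girth at least~$6$ (this is exactly the $T(6)$ condition), so the link condition holds and $\tilde{C}$ is CAT(0).

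Next I would verify that Figure~\ref{fig:nonpositiveflat} is the geometric realisation $\Delta_0$ of a reduced van Kampen diagram over $P$ with the indicated boundary word; here each face is labelled by a cyclic rotation of $w^{\pm1}$, $w=x_0x_{n/2+2}x_1^{-1}$. The construction then rests on two features of $\Delta_0$: (i)~its bottom and top boundary arcs are the same labelled path (both read $x_1x_{n/2+5}x_{n/2+7}x_{11}$), so countably many vertical copies of $\Delta_0$ glue one above another into an infinite strip $S$ that is again the geometric realisation of a reduced van Kampen diagram; and (ii)~the right boundary arc of $S$ is the $\theta^{12}$-translate of its left boundary arc, so the strips $\theta^{12i}(S)$ ($i\in\Z$) fit together edge-to-edge and, exactly as in Lemma~\ref{lem:n=8nonhyp}, copies of $\Delta_0$ tile the Euclidean plane without cancellation of faces. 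This yields an isometric embedding of the Euclidean plane in the CAT(0) complex $\tilde{C}$, so $G_n(x_0x_{n/2+2}x_1^{-1})$ is not hyperbolic by the Corollary to Theorem~A of~\cite{Bridson} (and, as in Remark~\ref{rk:Z+Zin(n,n/2+2)}, it contains a free abelian subgroup of rank~$2$).

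The step I expect to be the main obstacle is the reducedness check, carried out \emph{uniformly in $n$}: because the vertex subscripts in Figure~\ref{fig:nonpositiveflat} are read modulo $n$, one must confirm, for every even $n\geq 8$ with $n\neq 10$, that no coincidence among subscripts turns an adjacent pair of faces of $\Delta_0$, of the strip $S$, or of the resulting tiled plane into a cancelling pair (such a coincidence would destroy both the diagram and the no-cancellation property of the tiling). The case $n=8$ can be cross-checked against Figure~\ref{fig:positiveflat} via the isomorphism $G_8(x_0x_1x_5)\cong G_8(x_0x_4x_1^{-1})$ recorded in Corollary~\ref{cor:G8nonposnothyp}; for $n\geq 12$ the subscripts occurring in the figure are spaced widely enough that this is routine. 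Once reducedness is secured, the remaining steps are a direct transcription of the proof of Lemma~\ref{lem:n=8nonhyp}.
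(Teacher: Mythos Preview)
Your proposal is correct and follows essentially the same route as the paper: verify that $P_n(x_0x_{n/2+2}x_1^{-1})$ is $C(3)$--$T(6)$ so that the Cayley complex is CAT(0), use the diagram of Figure~\ref{fig:nonpositiveflat} as a building block, and tile the plane to invoke the Flat Plane Theorem. The only difference is organisational: the paper first places the shifted copies $\Delta_0,\Delta_{12},\ldots,\Delta_{6n-12}$ \emph{side by side} (after $n/2$ such copies the horizontal shift is $6n\equiv 0$, so the left and right sides match) to obtain a finite fundamental domain $\Delta$, and then tiles with $\Delta$, whereas you first stack unshifted copies of $\Delta_0$ vertically (using that top and bottom carry the same label) and then glue the strips $\theta^{12i}(S)$ horizontally. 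These describe the same tiling, and your identification of the uniform-in-$n$ reducedness check as the one point requiring care is apt.
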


\begin{proof}
Since the presentation $P_n(x_0x_{n/2+2}x_1^{-1})$ satisfies C(3)-T(6) and each relator has length~3, each face in the geometric realisation $\tilde{C}$ of the Cayley complex of $P$ (obtained by assigning length~1 to each edge) is an equilateral triangle, and so $\tilde{C}$ satisfies the CAT(0) inequality. Consider the geometric realisation $\Delta_0$ of the reduced van Kampen diagram given in Figure~\ref{fig:nonpositiveflat} and for each $0\leq i<n$ let $\Delta_i$ be obtained from $\Delta_0$ by applying the shift $\theta^i$ to each edge. Then placing $\Delta_0,\Delta_{12},\Delta_{24},\ldots ,\Delta_{6n-12}$ side by side gives the geometric realisation $\Delta$ of a reduced van Kampen diagram. Copies of $\Delta$ tile the Euclidean plane without cancellation of faces. Thus there is an isometric embedding of the Euclidean plane in $\tilde{C}$, and so the result follows from the Corollary to Theorem~A in~\cite{Bridson}.
\end{proof}

\subsection{Analysis of van Kampen diagrams over $P_n(x_0x_mx_k^{-1})$}\label{sec:curvaturenonpositive}

In this section we show that if the cyclic presentation $P=P_n(x_0x_mx_k^{-1})$ is $T(6)$ and precisely one of the
congruences ($\alpha.j$),($\beta+.j$),($\beta-.j$),($\gamma-$) hold then $G = G_n(x_0x_mx_k^{-1})$ is hyperbolic. As in Section~\ref{sec:curvaturepositive} we do this by showing that there is a linear function $f:\mathbb{N}\rightarrow \mathbb{N}$ such that for all $N\in \mathbb{N}$ and all freely reduced words $W\in F_n$ with length at most $N$ that represent the identity of $G$ we have $\mathrm{Area}(W)\leq f(N)$. Note that each face in $D$ is a triangle, as shown in Figure~\ref{fig:nonpositiveface}, where the corner labels $X,Y,Z$ correspond to the edge types of the star graph of $P$. In order to obtain a linear isoperimetric function (in Lemma~\ref{lem:curvaturenonpositivecase}) we first rule out certain configurations in $D$.

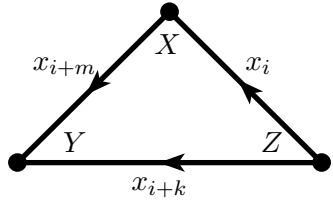
\begin{figure}
\begin{center}
\psset{xunit=1cm,yunit=1cm,algebraic=true,dimen=middle,dotstyle=o,dotsize=7pt 0,linewidth=1.6pt,arrowsize=3pt 2,arrowinset=0.25}
\begin{pspicture*}(9.5,13.5)(16,17)

\psline[linewidth=2pt,ArrowInside=->,ArrowInsidePos=0.5](14,14)(10,14)
\psline[linewidth=2pt,ArrowInside=->,ArrowInsidePos=0.5](14,14)(12,16)
\psline[linewidth=2pt,ArrowInside=->,ArrowInsidePos=0.5](12,16)(10,14)

\rput[tl](11.8,15.7){$X$}
\rput[tl](10.6,14.4){$Y$}
\rput[tl](13.2,14.4){$Z$}
\rput[tl](10.2,15.4){$x_{i+m}$}
\rput[tl](13.0,15.4){$x_i$}
\rput[tl](11.5,13.8){$x_{i+k}$}
\begin{scriptsize}
\psdots[dotstyle=*,linecolor=black](12,16)
\psdots[dotstyle=*,linecolor=black](10,14)
\psdots[dotstyle=*,linecolor=black](14,14)
\end{scriptsize}
\end{pspicture*}
\end{center}
  \caption{A typical face in a van Kampen diagram over the presentation $P_n(x_0x_mx_k^{-1})$.\label{fig:nonpositiveface}}
\end{figure}

\begin{lemma}\label{lem:nonposalldegree6}
Suppose that all interior vertices of $D$ have degree at least 6 and that all interior vertices of degree 6 of $D$ correspond to  precisely one of $(\alpha.j)$, $(\beta+.j)$, $(\beta-.j)$, $(\gamma-)$ for $j\in \{0,1\}$. If $v$ is an interior vertex of degree~6 where all the neighbours of $v$ are interior vertices then $v$ has a neighbour of degree at least 7.
\end{lemma}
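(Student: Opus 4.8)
The plan is to argue by contradiction, along the lines of the proof of Lemma~\ref{lem:label(F1F2.j)}. Suppose $v$ has degree~$6$, all six neighbours of $v$ are interior, and \textbf{every} neighbour of $v$ also has degree exactly~$6$. Since precisely one of the congruences $(\alpha.j),(\beta+.j),(\beta-.j),(\gamma-)$ holds, Lemma~\ref{lem:nonposshortcycles}(b) shows that every length-$6$ cycle of $\Gamma$ carries the single label $\lambda$ attached to that congruence in Table~\ref{tab:nonposshortcycles}; hence $v$ and each of its six neighbours has label $\lambda$. Using the symmetry that replaces $k$ by $m-k$ (interchanging the roles of $Y$ and $Z$ and of $j=0,1$) we may take $j=0$, so that $\lambda$ is one of $Y^{6}$ (case $(\alpha.0)$), $XZXYYY$ (cases $(\beta+.0)$ and $(\beta-.0)$, which share the same cycle type), or $XZZXYY$ (case $(\gamma-)$).

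The local bookkeeping is as follows. Each face of $D$ is a triangle with one corner of each type $X,Y,Z$ (Figure~\ref{fig:nonpositiveface}), and around the boundary of a face the three corner types occur in one fixed cyclic order for faces labelled by a cyclic permutation of $w$ and in the reverse cyclic order for faces labelled by a permutation of $w^{-1}$. Thus reading off the corners around $v$ yields the cyclic word $\lambda$, and for each face $f$ incident to $v$ the type of the corner of $f$ at $v$, together with the orientation class of $f$, determines the types of the corners of $f$ at its other two vertices. Since consecutive faces at $v$ share an edge through $v$, each neighbour $u$ of $v$ is thereby assigned a specific pair of cyclically consecutive letters of its own label, and (as $u$ has degree~$6$) this pair must occur as a cyclic subword of $\lambda$. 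Two faces of the same orientation class that share an edge can never form a cancelling pair, so reducedness of $D$ only constrains adjacent faces at $v$ of opposite class; this narrows the orientation pattern around $v$ to a short list, which I would record in a figure in each case (the analogue of the ``two possible labellings'' in the proof of Lemma~\ref{lem:label(F1F2.j)}).

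Case $(\alpha.0)$ is immediate: every face at $v$ has its $v$-corner of type $Y$, hence its other two corners of types $X$ and $Z$, so every neighbour of $v$ has a corner of type $X$ or $Z$ and cannot have label $Y^{6}$. In case $(\beta+.0)$/$(\beta-.0)$, running through the admissible orientation patterns forces some neighbour $u$ of $v$ to have two cyclically consecutive corner types both equal to $Z$, i.e.\ $ZZ$ as a cyclic subword of its label; but $XZXYYY$ contains no subword $ZZ$, a contradiction. The delicate case is $(\gamma-)$, where $\lambda = XZZXYY$ and the two-letter test is inconclusive, since every pair of consecutive corner types that can be forced on a neighbour does occur in $XZZXYY$. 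Here I would push the analysis one step further out: the forced two-letter pattern on a neighbour $u$ occurs uniquely in $XZZXYY$, so it also fixes the corner types of $u$ in the two faces that are, around $u$, adjacent to the two faces $u$ shares with $v$; examining those faces---and, where needed, invoking reducedness of $D$ across the edges joining consecutive neighbours of $v$---yields the contradiction. I expect this $(\gamma-)$ analysis, together with the care needed to keep track of which faces are labelled by $w$ and which by $w^{-1}$, to be the main obstacle; the other cases are as short as indicated. Since each case is contradictory, $v$ must after all have a neighbour of degree at least~$7$.
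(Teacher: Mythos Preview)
Your approach is the same as the paper's---a direct case split on the vertex label $\lambda$---and for the $(\alpha.j)$ and $(\beta\pm.j)$ cases your arguments are correct and simply flesh out the paper's one-line assertion; in particular the ``forced $ZZ$ subword'' argument for $XZXYYY$ is exactly right.

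Your instinct about $(\gamma-)$ is also correct, and in fact more honest than the paper. The paper treats $XZZXYY$ with the same one-sentence assertion as $XZXYYY$, but the two-letter test on neighbours really is inconclusive here. Concretely: with $v$ labelled $XZZXYY$, the orientations of the two faces with $v$-corner $X$ are forced, but the four faces with $v$-corner $Y$ or $Z$ are not; choosing those four all positive (or all negative) gives every neighbour a pair of corners that \emph{is} a subword of $XZZXYY$, the hexagon is reduced, and the star-graph walk at $v$ closes (the closure condition becomes $2(A-B)\equiv 0$, i.e.\ $2m\equiv 0$, which is precisely $(\gamma-)$). So the contradiction cannot be read off from the six faces around $v$ alone, and something beyond the immediate hexagon---your proposed push to the faces across the edges $u_iu_{i+1}$, together with reducedness there---is genuinely needed. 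That step is only sketched in your proposal, not carried out, so the $(\gamma-)$ case remains open in your write-up; this is the one real gap. Once you complete that analysis, your proof and the paper's coincide in spirit, though yours will be the more detailed of the two.
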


\begin{proof}
If $v$ is labelled $Y^6$ (resp. $Z^6$) then clearly none of its neighbours can be labelled $Y^6$ (resp. $Z^6$), so they must each have degree at least 7. If $v$ is labelled $XZXYYY$ (resp. $XYXZZZ$, resp. $XZZXYY$), then the labels of the corners of the faces incident to $v$ show that at least one of the neighbours of $v$ does not have label $XZXYYY$ (resp. $XYXZZZ$, resp. $XZZXYY$), and hence has degree at least 7.
\end{proof}

We are now in a position to be able to establish the existence of a suitable isoperimetric function.

\begin{lemma}\label{lem:curvaturenonpositivecase}
Let $n\geq 2$, $0\leq m,k<n$, $m\neq k, k\neq 0$ and set $A=k,B=k-m$ and let $\Gamma$ be the star graph of $P_n(x_0x_mx_k^{-1})$. Suppose that none of $(\rho.j)$, $(\sigma+.j)$, $(\sigma-.j)$, $(\tau+.j)$, $(\tau-.j)$, hold and that exactly one of the congruences ($\alpha.j$), ($\beta+.j$), ($\beta-.j$), ($\gamma-$) of Table~\ref{tab:nonposshortcycles} holds ($j\in \{0,1\}$). Then $G_n(x_0x_mx_k^{-1})$ has a linear isoperimetric function.
\end{lemma}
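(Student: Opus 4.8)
The plan is to follow, almost verbatim, the argument of Lemma~\ref{lem:curvaturepositivecase} (itself modelled on \cite[Theorem~13]{HowieWilliams}): I exhibit a linear isoperimetric function for $G=G_n(x_0x_mx_k^{-1})$ by a Gauss--Bonnet curvature estimate on reduced van Kampen diagrams. Fix $N\in\mathbb{N}$, a freely reduced word $W\in F_n$ of length at most $N$ representing the identity of $G$, and a reduced van Kampen diagram $D$ over $P$ whose boundary is a simple closed curve with label $W$; let $I$, $B$, $F$ be the sets of interior vertices, boundary vertices and faces of $D$, so that $\mathrm{Area}(W)\le|F|$. As there, every face of $D$ is a triangle (each relator has length $3$); I assign angles to the corners of the faces, define the curvatures $\kappa(f)$, $\kappa(v)$ for $v\in I$, and $\kappa(\hat v)$ for $\hat v\in B$ exactly as in Lemma~\ref{lem:curvaturepositivecase}, and apply Gauss--Bonnet in the form~(\ref{eq:GaussBonnet}). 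The aim is to choose the angle assignment so that $\kappa(f)\le -1$ for every face and $\kappa(v)\le 0$ for every interior vertex, whence~(\ref{eq:GaussBonnet}) forces a linear bound on $|F|$ in terms of $|B|\le N$.

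The structural fact that makes this work --- and makes the non-positive case genuinely \emph{easier} than the positive one --- is the following. Since $P$ is $T(6)$, Corollary~\ref{cor:T6T7nonpositive}(a) shows $\Gamma$ has no cycle of length $<6$, so every interior vertex of $D$ has degree at least $6$; and since exactly one length-$6$ cycle type occurs in $\Gamma$ (that is, exactly one of $(\alpha.j),(\beta+.j),(\beta-.j),(\gamma-)$ holds and, as in all applications of this lemma, $(\gamma+)$ does not --- the groups with $(\gamma+)$ being the non-hyperbolic $H(n,n/2+2)$ of Lemma~\ref{lem:2(A-B)=0groups}), there is a single possible label for an interior vertex of degree $6$. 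Hence Lemma~\ref{lem:nonposalldegree6} applies: an interior vertex of degree $6$ all of whose neighbours are interior has a neighbour of degree at least $7$. Combined with the trivial observation that a vertex having a boundary neighbour also has a ``good'' neighbour --- one lying on $\partial D$ or interior of degree $\ge 7$ --- this yields that \emph{every} interior vertex of degree $6$ has a good neighbour, and hence at least two of its six incident corners lie in faces containing a good vertex. In particular no flat is built out of degree-$6$ vertices, so, in contrast with Lemma~\ref{lem:curvaturepositivecase}, no transfer of curvature between vertices is required.

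Concretely, I would assign the angle $52$ to every corner at a boundary vertex or an interior vertex of degree $\ge 7$; and at an interior vertex of degree $6$, assign a corner the angle $59$ if all three vertices of its face are interior of degree $6$, and the angle $63$ otherwise. A routine check then gives $\kappa(f)\le -1$ for every face (the extreme cases being $-180+3(59)=-3$ and $-180+52+2(63)=-2$) and $\kappa(v)\le 0$ for every interior vertex $v$: for $\deg v\ge 7$, $\kappa(v)\le 2\pi-7(52)=-4$, while for $\deg v=6$ at least two corners at $v$ contribute $63$ and the remaining (at most) four contribute $59$, so $\kappa(v)\le 2\pi-2(63)-4(59)=-2$. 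Feeding $\sum_{v\in I}\kappa(v)\le 0$ and $\sum_{f\in F}\kappa(f)\le -|F|$ into~(\ref{eq:GaussBonnet}), and using that the angle sum at each boundary vertex is at least $52$, gives $52|B|\le(|B|-2)\pi-|F|$, hence $|F|\le 128|B|-360$; since $|B|\le N$ and $\mathrm{Area}(W)\le|F|$, the function $f(N)=128N-360$ is a linear isoperimetric function. The only real work is the bookkeeping that these angle constants simultaneously satisfy all the face and vertex inequalities, together with unwinding the neighbour-label configurations behind Lemma~\ref{lem:nonposalldegree6}; as that lemma carries the geometric weight, I do not anticipate a real obstacle.
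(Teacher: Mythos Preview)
Your proposal is correct and follows essentially the same Gauss--Bonnet argument as the paper, relying on Lemma~\ref{lem:nonposalldegree6} in the same way to guarantee that every interior degree-$6$ vertex has a ``good'' neighbour and hence non-positive curvature. The only differences are cosmetic: you assign $52$ uniformly to boundary and high-degree corners (the paper uses $47$ and $52$ separately) and use $63$ in place of $63.5$, arriving at the slightly sharper constant $128$ rather than $133$; your explicit remark that $(\gamma+)$ is excluded in all applications matches how the paper uses the lemma in Section~\ref{sec:proofofnonpositive}.
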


\begin{proof}
Let $N\in \mathbb{N}$ and let $W$ be a freely reduced word in the free group $F_n$ of length at most $N$ that represents the identity of $G$ and let $D$ be a reduced van Kampen diagram whose boundary is a simple closed curve with label $W$. We let $I$ denote the set of interior vertices of $D$, $B$ denote the set of boundary vertices of $D$, and $F$ denote the set of faces of $D$. Then $\mathrm{Area}(W)\leq |F|$. Writing $\pi$ to denote $180$, we define the curvature of a face $f$ by $\kappa(f)=-\pi + (\mathrm{sum~of~angles~in}~f)$, the curvature of an interior vertex $v$ by $\kappa (v)=2\pi - (\mathrm{sum~of~angles~at}~v)$, and the curvature of a boundary vertex $\hat{v}$ by $\kappa(\hat{v})=\pi - (\mathrm{sum~of~angles~at}~\hat{v})$. Again it follows from the Gauss-Bonnet Theorem that equation~(\ref{eq:GaussBonnet}) holds.

Since none of the congruences $(\rho.j)$, $(\sigma+.j)$, $(\sigma-.j)$, $(\tau+.j)$, $(\tau-.j)$ hold, every interior vertex of $D$ is of degree at least~6 and since exactly one of the congruences ($\alpha.j$),($\beta+.j$),($\beta-.j$),($\gamma-$) hold, then label of an interior vertex of degree 6 is the corresponding label given in Table~\ref{tab:nonposshortcycles}.

We assign angles to the corners of faces in $D$ as follows. If $v$ is a boundary vertex assign 47 to every corner at $v$; if $v$ is an interior vertex of degree at least $7$ then assign 52 to every corner at $v$. Assume now that $v$ is an interior vertex of degree $6$ and consider a face $f$ with vertices $v$ and $u,w$: if $u,w$ are interior of degree 6 then assign 59 to the corner of $f$ at $v$; otherwise assign $63.5$ to the corner of $f$ at $v$.
 If a face $f$ contains a boundary vertex then $\kappa(f)\leq -\pi +47 +2(63.5)=-6$; if a face $f$ contains only interior vertices of degree 6 then $\kappa(f)=-\pi+3(59)=-3$; if a face $f$ contains only interior vertices of degree at least 7 then $\kappa(f)= -\pi +3(52)=-24$; if a face $f$ contains an interior vertex of degree 6 and two interior vertices of degree at least 7 then $\kappa(f)=-\pi+63.5+2(52)=-12.5$; if a face $f$ contains two vertices of degree 6 and one of degree at least 7 then $\kappa(f)=-\pi+2(63.5)+52=-1$. Therefore $\kappa(f)\leq -1$ for all $f\in F$.

We now turn to curvature of the vertices. If $v$ is an interior vertex of degree at least 7 then $\kappa(v)\leq 2\pi -7(52)=-4$; if $v$ is an interior vertex of degree 6 that has a neighbour that is either interior of degree at least 7 or is a boundary vertex then $\kappa(v)\leq 2\pi -4(59)-2(63.5)=-3$.

By Lemma~\ref{lem:nonposalldegree6} every interior vertex of degree 6 has a neighbour on the boundary or a neighbour that is interior of degree at least 7. Then $\kappa(v)\leq -3$ for all interior vertices $v$ so~(\ref{eq:GaussBonnet}) implies
\begin{alignat*}{1}
  2\pi &= \sum_{v\in I} \kappa(v) + \sum_{\hat{v}\in B} \kappa (\hat{v}) +\sum_{f\in F} \kappa(f)\\
       &\leq \sum_{v\in I} (-3) + \sum_{\hat{v}\in B} (\pi -\mathrm{sum~of~angles~at}~\hat{v}) + \sum_{f\in F} (-1)\\
       &= -3|I| + \sum_{\hat{v}\in B} (\pi -\mathrm{sum~of~angles~at}~\hat{v}) -|F|\\
       &\leq |B|\pi - \sum_{\hat{v}\in B} (\mathrm{sum~of~angles~at}~\hat{v}) -|F|
\end{alignat*}
so
\[ \sum_{\hat{v}\in B} (\mathrm{sum~of~angles~at}~\hat{v}) \leq (|B|-2)\pi -|F|.\]
On the other hand, the corner angle at any boundary vertex is $47$ so the sum of angles over the boundary vertices is bounded below by $47|B|$. Therefore $47|B|\leq (|B|-2)\pi -|F|$ so $|F|\leq 133|B|-360$. But $\mathrm{Area}(W)\leq |F|$ and $|B|\leq N$ so $\mathrm{Area}(W) \leq 133N-360$, so $f(N)=133N-360$ is a linear isoperimetric function, as required.
\end{proof}

We now have all the ingredients to prove Theorem~\ref{thm:nonpositive}.

\subsection{Proof of Theorem~\ref{thm:nonpositive}}\label{sec:proofofnonpositive}

Suppose that $n\geq 2$, $0\leq m,k<n$, $m\neq k$, $k\neq 0$, $(n,m,k)=1$ and that the cyclic presentation $P_n(x_0x_mx_k^{-1})$ satisfies $T(6)$. Then Lemma~\ref{lem:nonposshortcycles} implies that none of the congruences $(\rho.j)$, $(\sigma+.j)$, $(\sigma-.j)$, $(\tau+.j)$, $(\tau-.j)$ hold. If $(\gamma+)$ holds then $n=8$ or $n\geq 12$ and $G$ is not hyperbolic by Lemmas~\ref{lem:2(A-B)=0groups},\ref{lem:H(n,n/2+2)nothyp}, so suppose that $(\gamma+)$ does not hold.

If $\Gamma$ has no cycle of length less than 7, then $P_n(x_0x_mx_k^{-1})$ satisfies C(3)-T(7), so $G_n(x_0x_mx_k^{-1})$ is hyperbolic by~\cite[Corollary~4.1]{GerstenShortI}, so we may assume that $\Gamma$ has a cycle of length~6 so, by Lemma~\ref{lem:nonposshortcycles}, at least one of the congruences
$(\alpha.j)$, $(\beta+.j)$, $(\beta-.j)$, $(\gamma-)$ hold ($j\in \{0,1\}$). Suppose that more than one of them hold. Then $G$ is one of the groups in the conclusion of Lemma~\ref{lem:nonposmorethanonelength6NewVersion}. When $n=8$ the group $G\cong G_8(x_0x_4x_1^{-1})=H(8,4)$ or $G\cong G_8(x_0x_6x_1^{-1})=H(8,6)$, which are non-hyperbolic by Corollary~\ref{cor:G8nonposnothyp} and Lemma~\ref{lem:H(n,n/2+2)nothyp}, respectively. In the remaining cases $G$ is non-elementary hyperbolic by Example~\ref{ex:hypgroupswithn=9,10,18}.

Suppose then that exactly one of the congruences $(\alpha.j)$, $(\beta+.j)$, $(\beta-.j)$, $(\gamma-)$ holds. Then $G$ has a linear isoperimetric function, and hence is hyperbolic, by Lemma~\ref{lem:curvaturenonpositivecase}. By~\cite[Corollary~11]{HowieWilliams} $G$ contains a non-abelian free subgroup so it is non-elementary hyperbolic.

\subsection*{Acknowledgements}

The first named author thanks Jim Howie for his hospitality during a research visit to the School of Mathematical and Computer Sciences, Heriot Watt University, in November 2019; that visit was central to the development of the Flat Plane arguments used in this paper. Both authors thank Jim Howie and William Bogley for insightful conversations relating to this work, and helpful comments on a draft of this article.

  \textsc{Ihechukwu Chinyere: Department of Mathematical Sciences, University of Essex, Wivenhoe Park, Colchester, Essex CO4 3SQ, UK.}\par\nopagebreak
  \textit{E-mail address}, \texttt{ihechukwu.chinyere@essex.ac.uk}

  \medskip

  \textsc{Gerald Williams: Department of Mathematical Sciences, University of Essex, Wivenhoe Park, Colchester, Essex CO4 3SQ, UK.}\par\nopagebreak
  \textit{E-mail address}, \texttt{gerald.williams@essex.ac.uk}

\end{document}